\title{Zeta functions of dynamically tame Liouville domains}
\author{Michael Hutchings\footnote{Partially supported by NSF grant DMS-2005437.}}
\date{}
\numberwithin{equation}{section}
\newtheorem{theorem}{Theorem}[section]
\newtheorem{proposition}[theorem]{Proposition}
\newtheorem{corollary}[theorem]{Corollary}
\newtheorem{lemma}[theorem]{Lemma}
\newtheorem{lemma-definition}[theorem]{Lemma-Definition}
\newtheorem{question}[theorem]{Question}
\theoremstyle{definition}
\newtheorem{definition}[theorem]{Definition}
\newtheorem{remark}[theorem]{Remark}
\newtheorem{example}[theorem]{Example}
\newtheorem{notation}[theorem]{Notation}
\newcommand{\floor}[1]{\left\lfloor #1 \right\rfloor}
\newcommand{\C}{{\mathbb C}}
\newcommand{\Q}{{\mathbb Q}}
\newcommand{\R}{{\mathbb R}}
\newcommand{\Z}{{\mathbb Z}}
\newcommand{\op}{\operatorname}
\newcommand{\Ker}{\op{Ker}}
\newcommand{\bpm}{\begin{pmatrix}}
\newcommand{\epm}{\end{pmatrix}}
\renewcommand{\epsilon}{\varepsilon}
\begin{document}

\maketitle

\begin{abstract}
We define a dynamical zeta function for nondegenerate Liouville domains, in terms of Reeb dynamics on the boundary. We use filtered equivariant symplectic homology to (i) extend the definition of the zeta function to a more general class of ``dynamically tame'' Liouville domains, and (ii) show that the zeta function of a dynamically tame Liouville domain is invariant under exact symplectomorphism of the interior. As an application, we find examples of open domains in $\R^4$, arbitrarily close to a ball, which are not symplectomorphic to open star-shaped toric domains.
\end{abstract}

\tableofcontents

\setcounter{tocdepth}{2}


\section{Introduction}

\subsection{A motivating question}
\label{sec:motivating}

In 2022, the author heard the following question from Jean Gutt and Vinicius Ramos:

\begin{question}
\label{question:motivating} Is every bounded open convex set in $\R^4$ with smooth boundary symplectomorphic to the interior of a star-shaped toric domain?
\end{question}

To clarify what this means, if $\Omega\subset\R_{\ge 0}^2$, define the associated {\bf toric domain\/}
\[
X_\Omega = \left\{z\in\C^2 \;\big|\; \left(\pi|z_1|^2,\pi|z_2|^2\right) \in\Omega\right\},
\]
with the restriction of the standard symplectic form on $\C^2=\R^4$ defined by
\[
\omega_0 = \sum_{i=1}^2 dx_i\wedge dy_i.
\]

\begin{definition}
\label{def:osstd}
The set $X_\Omega\subset\C^2$ is a {\bf star-shaped toric domain\/} if $\Omega$ is a compact set in $\R^2_{\ge 0}$ whose interior contains the origin, whose boundary is a smooth curve $\partial_+\Omega$ from a point $(a,0)$ with $a>0$ to a point $(0,b)$ with $b>0$ which is transverse to the radial vector field on $\R^2$.
\end{definition}

For example, if $\partial_+\Omega$ is the line segment from $(a,0)$ to $(0,b)$, then $X_\Omega$ is the ellipsoid
\[
E(a,b) = \left\{z\in\C^2 \;\bigg|\; \frac{\pi|z_1|^2}{a} + \frac{\pi|z_2|^2}{b} \le 1\right\}.
\]
Star-shaped toric domains are basic examples for studying symplectic embedding questions using various technology, see e.g.\ \cite{concaveconvex,beyond,siegel}.
The question is now: If $U\subset\R^4=\C^2$ is a bounded convex open set with smooth boundary, equipped with the restriction of the standard symplectic form $\omega_0$, is $U$ symplectomorphic to the interior of a star-shaped toric domain $X_\Omega$ as above?

One motivation for this question was that an affirmative answer would imply the strong Viterbo conjecture in four dimensions\footnote{Haim-Kislev and Ostrover \cite{hko} recently found a counterexample to the Viterbo conjecture.}, asserting that all normalized symplectic capacities agree on convex domains in $\R^4$. The reason is that if a bounded open convex set $U$ is symplectomorphic to the interior of a star-shaped toric domain $X_\Omega$, then $X_\Omega$ is necessarily dynamically convex (because of the convexity of $U$ together with Remark~\ref{rem:90s} below), and it is shown in \cite[Thm.\ 1.7]{ghr} that all normalized symplectic capacities agree on dynamically convex toric domains in $\R^4$.

However, it turns out that the answer to Question~\ref{question:motivating} is no. Here is one way to find counterexamples, which follows quickly from the machinery developed in this paper. Define a {\bf smooth star-shaped domain\/} in $\R^{2n}$ to be a compact domain $X\subset\R^{2n}$ whose boundary is a connected smooth hypersurface transverse to the radial vector field. In this situation, the standard Liouville form on $\R^{2n}$ given by
\begin{equation}
\label{eqn:standardLiouvilleform}
\lambda_0 = \frac{1}{2}\sum_{i=1}^n\left(x_i\,dy_i-y_i\,dx_i\right)
\end{equation}
restricts to a contact form on $\partial X$. This determines a Reeb vector field on $\partial X$; see \S\ref{sec:zetanondeg} below for definitions.

\begin{proposition}
\label{prop:motivating}
(proved in \S\ref{sec:introexamples})
Let $X\subset\R^4$ be a smooth star-shaped domain. Suppose that the Reeb flow on $\partial X$ is nondegenerate\footnote{Instead of assuming that the Reeb flow on $\partial X$ is nondegenerate, it is enough to assume that the Reeb orbit $\gamma$ is nondegenerate and that its symplectic action is not an accumulation point of the action spectrum, so that the Euler characteristic jump in Definition~\ref{def:tameecj} below is well defined and negative.}. Suppose that there exists a simple positive hyperbolic Reeb orbit $\gamma$ in $\partial X$ whose symplectic action does not agree with the total symplectic action of any finite set of Reeb orbits in $\partial X$, other than $\{\gamma\}$ itself. Then $\op{int}(X)$ is not symplectomorphic to the interior of a star-shaped toric domain\footnote{Another approach to distinguishing open star-shaped domains from convex or concave toric domains in arbitrary dimension was recently presented in \cite[Cor.\ 2.8]{bg}.}.
\end{proposition}

\begin{example}
One can construct domains $X$ which satisfy the conditions in Proposition~\ref{prop:motivating}, and which are arbitrarily close in any number of derivatives to a round ball (and in particular convex), as follows. Let $f:\C P^1\to\R$ be a $C^2$-small Morse function. Define
\begin{equation}
\label{eqn:Xf}
X_f = \left\{z\in \C^2\setminus\{0\} \;\bigg|\; \pi|z|^2 \le e^{f([z_1:z_2])} \right\} \cup \{0\}.
\end{equation}
Then $\partial X_f$ has one simple Reeb orbit for each critical point $[z_1:z_2]$ of $f$, given by a scaling of the Hopf circle over $[z_1:z_2]$; see the calculations in \S\ref{sec:computations}. There may be additional simple Reeb orbits of much greater symplectic action, which appear in $S^1$-families; a small additional perturbation of $X_f$ will make these nondegenerate, cf.\ \cite[\S6]{hwz}. If $f$ has a unique index $1$ critical point, then the corresponding simple Reeb orbit $\gamma$ is a positive hyperbolic Reeb orbit satisfying the symplectic action condition in Proposition~\ref{prop:motivating}. See \S\ref{sec:computations} for more details.
\end{example}

The proof of Proposition~\ref{prop:motivating} uses information about Reeb orbits coming from filtered equivariant symplectic homology. The data from filtered equivariant symplectic homology that we need can be conveniently encoded in a kind of ``zeta function''. The main goal of this paper is to define this zeta function for suitable Liouville domains and prove that it is invariant under exact symplectomorphisms of open Liouville domains.

\subsection{Zeta functions of nondegenerate contact manifolds}
\label{sec:zetanondeg}

To start, we now recall some basic definitions and define a dynamical zeta function for a nondegenerate contact manifold.

Let $Y$ be a closed $2n-1$ dimensional manifold. A {\bf contact form\/} on $Y$ is a $1$-form $\lambda$ on $Y$ such that $\lambda\wedge(d\lambda)^{n-1}\neq0$. Given a contact form $\lambda$, we define the {\bf contact structure\/} $\xi=\Ker(\lambda)$, which is a symplectic vector bundle with the fiberwise symplectic form $d\lambda$. The contact form $\lambda$ also determines the {\bf Reeb vector field\/} $R$ characterized by $d\lambda(R,\cdot)=0$ and $\lambda(R)=1$. A {\bf Reeb orbit\/} is a periodic orbit of $R$, i.e.\ a map $\gamma:\R/T\Z\to Y$ for some $T>0$ such that $\gamma'=R\circ\gamma$. We declare two Reeb orbits to be equivalent if they differ by precomposition with translations of the domain. We define the {\bf symplectic action\/} $\mathcal{A}(\gamma)>0$ to be the period $T$. A Reeb orbit $\gamma$ is {\bf simple\/} if the map $\gamma$ is an embedding. Every Reeb orbit $\gamma$ is a covering of a simple Reeb orbit, and we denote the covering multiplicity by $d(\gamma)\in\Z^{>0}$, so that $\gamma$ is simple if and only if $d(\gamma)=1$. We denote the set of all Reeb orbits by $\mathcal{P}(Y,\lambda)$, and the set of simple Reeb orbits by $\mathcal{P}_{\op{simp}}(Y,\lambda)$.

Given a Reeb orbit $\gamma:\R/T\Z\to Y$, the derivative of the time $T$ Reeb flow restricts to a symplectic linear map
\[
P_\gamma: \xi_{\gamma(0)} \longrightarrow \xi_{\gamma(0)},
\]
which we call the {\bf linearized return map\/}. We say that the Reeb orbit $\gamma$ is {\bf nondegenerate\/} if the linearized return map $P_\gamma$ does not have $1$ as an eigenvalue. In this case, we define the {\bf Lefschetz sign\/} $(-1)^{\epsilon(\gamma)}\in\{\pm 1\}$ to be the sign of $\det(1-P_\gamma)$. Here we regard $\epsilon(\gamma)\in\Z/2$. We say that the contact form $\lambda$ is {\bf nondegenerate\/} if all Reeb orbits are nondegenerate. 
A generic contact form on $Y$ is nondegenerate.

If $R$ is a commutative ring, let $\Lambda_R$ denote the {\bf universal Novikov ring\/} over $R$, consisting of functions $\alpha:\R\to R$ such that for every $L\in\R$, the restriction of $\alpha$ to the interval $(-\infty,L)$ is finitely supported. We typically write such a function as a formal power series
\[
\sum_{s\in\R}\alpha(s)t^s.
\]
The addition on $\Lambda_R$ is defined by addition of functions, and the product is defined by the convolution
\[
(\alpha\beta)(s) = \sum_{s'\in\R}\alpha(s')\beta(s-s').
\]
Our default choice of ring is $R=\Z$, and we write $\Lambda=\Lambda_\Z$.

Let $\Lambda_R^+$ denote the set of $\alpha\in\Lambda_R$ supported on $(0,\infty)$. There is an exponential map
\[
\exp:\Lambda_\Q^+ \longrightarrow \Lambda_\Q
\]
defined by
\[
\exp(\alpha) = \sum_{k=0}^\infty \frac{\alpha^k}{k!}.
\]

\begin{definition}
Let $\lambda$ be a nondegenerate contact form on a closed odd dimensional manifold $Y$. Define the {\bf zeta function\/}
\begin{equation}
\label{eqn:defzeta}
\zeta(Y,\lambda) = \exp\sum_{\gamma\in\mathcal{P}(Y,\lambda)}\frac{(-1)^{\epsilon(\gamma)}}{d(\gamma)}t^{\mathcal{A}(\gamma)} \in \Lambda_\Q.
\end{equation}
\end{definition}

Note that $\zeta(Y,\lambda)$ is well defined, because nondegeneracy of $\lambda$ and compactness of $Y$ imply that for every $L\in\R$ there are only finitely many Reeb orbits $\gamma$ with action $\mathcal{A}(\gamma)<L$, so that the sum that we are exponentiating in \eqref{eqn:defzeta} is in $\Lambda_\Q^+$.

In fact, the zeta function has integer coefficients, i.e.\ $\zeta(Y,\lambda)\in\Lambda$. This is a consequence of the following product formula. To state the formula, if $\gamma\in\mathcal{P}_{\op{simp}}(Y,\lambda)$ is a simple Reeb orbit and if $d$ is a positive integer, let $\gamma^d$ denote the $d$-fold cover of $\gamma$.
Also note that the units $\Lambda^\times\subset\Lambda$ consist of elements with leading coefficient $\pm1$, and in particular $1+\Lambda^+\subset\Lambda^\times$. If $\alpha\in\Lambda^+$ then we have
\begin{equation}
\label{eqn:novikovinverse}
(1-\alpha)^{-1}=\sum_{k=0}^\infty \alpha^k.
\end{equation}

\begin{lemma}
\label{lem:productformula}
(proved in \S\ref{sec:algebraic})
Let $\lambda$ be a nondegenerate contact form on a closed odd dimensional manifold $Y$. Then
\begin{equation}
\label{eqn:productformula}
\zeta(Y,\lambda)=\prod_{\gamma\in\mathcal{P}_{\op{simp}}(Y,\lambda)}
\left(1-(-1)^{\epsilon(\gamma)+\epsilon(\gamma^2)} t^{\mathcal{A}(\gamma)}\right)^{-(-1)^{\epsilon(\gamma^2)}} \in 1 + \Lambda^+.
\end{equation}
\end{lemma}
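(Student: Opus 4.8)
The plan is to start from the definition \eqref{eqn:defzeta}, split the sum over all Reeb orbits according to the underlying simple orbit, and then recognize the resulting exponential as an infinite product. Concretely, every $\gamma \in \mathcal{P}(Y,\lambda)$ is uniquely $\gamma_0^d$ for a simple orbit $\gamma_0$ and $d = d(\gamma) \ge 1$, with $\mathcal{A}(\gamma_0^d) = d\,\mathcal{A}(\gamma_0)$, so
\[
\sum_{\gamma\in\mathcal{P}(Y,\lambda)}\frac{(-1)^{\epsilon(\gamma)}}{d(\gamma)}t^{\mathcal{A}(\gamma)}
= \sum_{\gamma_0\in\mathcal{P}_{\op{simp}}(Y,\lambda)}\sum_{d=1}^\infty \frac{(-1)^{\epsilon(\gamma_0^d)}}{d}\,t^{d\mathcal{A}(\gamma_0)}.
\]
Since $\exp$ of a sum is the product of the exponentials (the sums all lie in $\Lambda_\Q^+$, where the exponential is a ring homomorphism from the additive to the multiplicative structure, and for each $L$ only finitely many terms contribute below degree $L$), it suffices to show, for each fixed simple orbit $\gamma_0$ with action $a := \mathcal{A}(\gamma_0)$, the identity
\[
\exp\left(\sum_{d=1}^\infty \frac{(-1)^{\epsilon(\gamma_0^d)}}{d}\,t^{da}\right)
= \left(1-(-1)^{\epsilon(\gamma_0)+\epsilon(\gamma_0^2)} t^{a}\right)^{-(-1)^{\epsilon(\gamma_0^2)}}.
\]

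The key input is an elementary fact about how the Lefschetz sign of iterates behaves, which I expect is the main technical point. Writing $P := P_{\gamma_0}$ for the linearized return map, one has $\epsilon(\gamma_0^d) = \operatorname{sign}\det(1-P^d)$, and a standard eigenvalue analysis (over $\R$, grouping eigenvalues into real pairs $\mu,\mu^{-1}$ and complex-conjugate-plus-inverse quadruples) shows that $\operatorname{sign}\det(1-P^d)$ takes one of only a few periodic patterns in $d$. The upshot is: if $\epsilon(\gamma_0^2) = \epsilon(\gamma_0)$ (the "even" iterate does not flip the sign — this happens when $\gamma_0$ is, in the relevant sense, not negative hyperbolic), then $\epsilon(\gamma_0^d) = \epsilon(\gamma_0)$ for all $d$; whereas if $\epsilon(\gamma_0^2) = -\epsilon(\gamma_0)$ (the negative hyperbolic case, where $P$ has an odd number of real eigenvalues in $(-\infty,0)$ counted with multiplicity), then $\epsilon(\gamma_0^d) = \epsilon(\gamma_0)$ for $d$ odd and $\epsilon(\gamma_0^d) = \epsilon(\gamma_0^2)$ for $d$ even. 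In both cases the sign $(-1)^{\epsilon(\gamma_0^d)}$ is determined by the parity of $d$ together with the two numbers $\epsilon(\gamma_0),\epsilon(\gamma_0^2)$. I would prove this sign lemma first, as a self-contained linear-algebra statement.

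Granting the sign lemma, the computation reduces to two cases of the classical logarithmic identity. In the first case, $(-1)^{\epsilon(\gamma_0^d)} = (-1)^{\epsilon(\gamma_0)} =: \sigma$ is constant in $d$, and since $\epsilon(\gamma_0^2)=\epsilon(\gamma_0)$ we also have $(-1)^{\epsilon(\gamma_0)+\epsilon(\gamma_0^2)} = 1$ and $-(-1)^{\epsilon(\gamma_0^2)} = -\sigma$; then
\[
\exp\left(\sigma\sum_{d=1}^\infty \frac{(t^a)^d}{d}\right) = \exp\left(-\sigma\log(1-t^a)\right) = (1-t^a)^{-\sigma},
\]
matching the right-hand side. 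In the second case $(-1)^{\epsilon(\gamma_0^d)}$ equals $\sigma$ for $d$ odd and $-\sigma$ for $d$ even (where $\sigma = (-1)^{\epsilon(\gamma_0)} = -(-1)^{\epsilon(\gamma_0^2)}$), so the sum telescopes as $\sum_d \sigma(-1)^{d+1}(t^a)^d/d = \sigma\log(1+t^a)$, giving $\exp$ equal to $(1+t^a)^{\sigma}$; and on the right $(-1)^{\epsilon(\gamma_0)+\epsilon(\gamma_0^2)} = -1$ while $-(-1)^{\epsilon(\gamma_0^2)} = \sigma$, so the right-hand side is $(1-(-1)t^a)^{\sigma} = (1+t^a)^{\sigma}$, again a match. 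Here all manipulations of $\log$, $\exp$, and infinite series take place in $\Lambda_\Q$ and are justified by the finiteness condition below each action level; the identity $\exp(-\log(1-u)) = (1-u)^{-1} = \sum_{k\ge 0}u^k$ for $u\in\Lambda_\Q^+$ is \eqref{eqn:novikovinverse} together with formal properties of $\exp$, and the fractional/negative "powers" $(1-u)^{\pm 1}$ appearing on the right are really just $\sum u^k$ or $1-u$, so no genuine non-integer exponentiation is involved. Finally, taking the product over all simple orbits $\gamma_0$, the right-hand factors are each in $1+\Lambda^+$, the product converges in $1+\Lambda^+$ by the usual action-finiteness, and in particular $\zeta(Y,\lambda) \in 1+\Lambda^+ \subset \Lambda^\times$, establishing the last claim in passing that $\zeta$ has integer coefficients.
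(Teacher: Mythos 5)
Your proposal is correct and follows essentially the same route as the paper's proof in \S\ref{sec:algebraic}: split the sum in \eqref{eqn:defzeta} over simple orbits and their iterates, reduce to a per-orbit identity, and use the key fact that $\epsilon(\gamma^d)$ equals $\epsilon(\gamma)$ for $d$ odd and $\epsilon(\gamma^2)$ for $d$ even (your ``sign lemma'' is exactly the paper's equation \eqref{eqn:iterateparity}, justified there by counting eigenvalues of $P_\gamma$ in $(0,1)$ versus $(-1,1)$, which is the same eigenvalue analysis you sketch). Your organization into two cases with $\log(1\mp t^a)$ series is just a repackaging of the paper's termwise verification after taking formal logarithms.
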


\begin{example}
\label{ex:dim3}
Suppose $\dim(Y)=3$, and continue to assume that the contact form $\lambda$ is nondegenerate. For a simple Reeb orbit $\gamma$, since $P_\gamma$ is a rank $2$ symplectic map, there are three possibilities for the eigenvalues of $P_\gamma$: We say that $\gamma$ is {\bf positive hyperbolic\/} if the eigenvalues of $P_\gamma$ are positive, {\bf negative hyperbolic\/} if the eigenvalues of $P_\gamma$ are negative, and {\bf elliptic\/} if the eigenvalues of $P_\gamma$ are on the unit circle. By the product formula of Lemma~\ref{lem:productformula}, see \eqref{eqn:iterateparity}, we have
\begin{equation}
\label{eqn:3dimproduct}
\zeta(Y,\lambda) = \prod_{\gamma\in\mathcal{P}_{\op{simp}}(Y,\lambda)}\left\{\begin{array}{cl}
\left(1-t^{\mathcal{A}(\gamma)}\right)^{-1}, & \mbox{$\gamma$ elliptic\/},\\
1-t^{\mathcal{A}(\gamma)}, & \mbox{$\gamma$ positive hyperbolic\/},\\
1+t^{\mathcal{A}(\gamma)}, & \mbox{$\gamma$ negative hyperbolic\/}.
\end{array}\right.
\end{equation}
\end{example}

\begin{remark}
\label{rem:ech}
In the situation of Example~\ref{ex:dim3}, an {\bf ECH generator\/} is a finite set of pairs $\alpha=\{(\alpha_i,m_i)\}$ where the $\alpha_i$ are distinct simple Reeb orbits, the $m_i$ are positive integers, and $m_i=1$ whenever $\alpha_i$ is (positive or negative) hyperbolic. The symplectic action of the ECH generator $\alpha$ is defined by
\[
\mathcal{A}(\alpha) = \sum_i m_i \mathcal{A}(\alpha_i).
\]
We also define the mod 2 grading of $\alpha$, denoted by $|\alpha|\in\Z/2$, to be the parity of the number of $i$ such that $\alpha_i$ is positive hyperbolic. The embedded contact homology of $(Y,\lambda)$ is the homology of a chain complex which is generated by ECH generators, and which has a canonical mod 2 grading as above; see e.g.\ \cite{bn}.

It follows by expanding the product \eqref{eqn:3dimproduct} that if $\mathcal{P}_{\op{ECH}}(Y,\lambda)$ denotes the set of ECH generators, then
\begin{equation}
\label{eqn:ECHzeta}
\zeta(Y,\lambda) = \sum_{\alpha\in\mathcal{P}_{\op{ECH}}(Y,\lambda)}(-1)^{|\alpha|}t^{\mathcal{A}(\alpha)}.
\end{equation}
In fact, this formula is closely related to the motivation for the definition of embedded contact homology, as explained in \cite[\S2.6]{bn}. Although embedded contact homology is only defined in three dimensions, part of the motivation for this paper was to extend \eqref{eqn:ECHzeta} to obtain invariants of higher dimensional contact manifolds and Liouville domains.
\end{remark}

\begin{remark}
Various kinds of dynamical zeta functions have been extensively studied at least since the 1940s. Zeta functions like \eqref{eqn:defzeta} are often considered without the sign $(-1)^{\epsilon(\gamma)}$; see e.g.\ the survey by Ruelle \cite{ruelle}. Zeta functions with signs as in \eqref{eqn:defzeta}, with relations to Reidemeister torsion, have been studied for example by Fried \cite{fried}, with ideas going back to Weil \cite{weil} and Milnor \cite{milnor}. Similar zeta functions in the context of Morse theory for circle-valued functions and closed 1-forms were studied in \cite{rt1,pajitnov,rt3}. A different zeta function from ECH with fewer terms than \eqref{eqn:ECHzeta} is studied in \cite{cgs}.
\end{remark}


\subsection{Zeta functions of nondegenerate Liouville domains}

We now discuss the extent to which the interior of a nondegenerate Liouville domain sees the zeta function of its boundary.

\begin{definition}
\label{def:nld}
A {\bf Liouville domain\/} is a pair $(X,\lambda)$ where $X$ is a compact connected $2n$-dimensional manifold with boundary, and $\lambda$ is a 1-form on $X$ such that:
\begin{itemize}
\item $d\lambda$ is a symplectic form on $X$.
\item
$\lambda|_{\partial X}$ is a contact form on $\partial X$.
\item
The orientation of the contact manifold $(\partial X,\lambda|_{\partial X})$ agrees with the boundary orientation of the symplectic manifold $(X,d\lambda)$.
\end{itemize}
We say that the Liouville domain $(X,\lambda)$ is {\bf nondegenerate\/} if the contact form $\lambda|_{\partial X}$ is nondegenerate.
\end{definition}

For example, if $X$ is a smooth star-shaped domain in $\R^{2n}$, then $(X,\lambda)$ is a Liouville domain, where $\lambda$ is the restriction to $X$ of the standard Liouville form
\eqref{eqn:standardLiouvilleform}.

\begin{definition}
\label{def:zetanondegenerate}
If $(X,\lambda)$ is a nondegenerate Liouville domain, define the {\bf zeta function\/}
\[
\zeta(X,\lambda) = \zeta(\partial X,\lambda|_{\partial X}).
\]
\end{definition}

\begin{definition}
Let $(X,\lambda)$ and $(X',\lambda')$ be Liouville domains. A symplectomorphism
\begin{equation}
\label{eqn:exact}
\phi: (\op{int}(X),d\lambda) \stackrel{\simeq}{\longrightarrow} (\op{int}(X'),d\lambda')
\end{equation}
is {\bf exact\/} if the closed 1-form $\phi^*\lambda'-\lambda$ on $\op{int}(X)$ is exact.
\end{definition}

We can now state the first theorem of this paper.

\begin{theorem}
\label{thm:nondegenerate}
(proved in \S\ref{sec:algebraic})
Let $(X,\lambda)$ and $(X',\lambda')$ be nondegenerate Liouville domains. Suppose there exists an exact symplectomorphism
\[
\phi: (\op{int}(X),d\lambda) \stackrel{\simeq}{\longrightarrow} (\op{int}(X'),d\lambda').
\]
Then
\[
\zeta(X,\lambda) = \zeta(X',\lambda').
\]
\end{theorem}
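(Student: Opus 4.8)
The plan is to use the invariance of filtered $S^1$-equivariant symplectic homology under exact symplectomorphisms of the interior, and to show that the zeta function $\zeta(X,\lambda)$ is determined by the filtered $S^1$-equivariant symplectic homology of $(X,\lambda)$ together with its action filtration. Concretely, for a nondegenerate Liouville domain one has a chain-level model for $SH^{S^1}_*(X,\lambda)$, built from Reeb orbits (after a small Morse-type perturbation of the $S^1$-families coming from the Reeb orbits, in the standard way), and filtered by symplectic action. The Euler characteristic of the associated graded pieces of this filtered complex — equivalently, the ``graded Euler characteristic'' as an element of $\Lambda_\Q$ recording in the coefficient of $t^L$ the alternating count of generators of action exactly $L$ — should recover precisely the logarithm $\sum_{\gamma}\frac{(-1)^{\epsilon(\gamma)}}{d(\gamma)}t^{\mathcal{A}(\gamma)}$, perhaps up to a universal normalization (contributions of the constant orbits / the fundamental class, which are the same for $X$ and $X'$). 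Since exact symplectomorphism of interiors induces an isomorphism of filtered $S^1$-equivariant symplectic homologies respecting the action filtration, the graded Euler characteristics agree, hence the zeta functions agree.

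The first step is to recall the construction of filtered $S^1$-equivariant symplectic homology $SH^{S^1,L}(X,\lambda)$ and the fact (which I would cite rather than reprove) that an exact symplectomorphism $\phi:(\op{int}(X),d\lambda)\to(\op{int}(X'),d\lambda')$ induces isomorphisms $SH^{S^1,L}(X,\lambda)\cong SH^{S^1,L}(X',\lambda')$ for all $L$, compatible with the maps $SH^{S^1,L}\to SH^{S^1,L'}$ for $L<L'$. The second step is the chain-level computation: for a nondegenerate $\lambda$, after perturbing each Reeb orbit $S^1$-family by a perfect Morse function on $S^1$ (two critical points), each simple Reeb orbit $\gamma$ and its iterates $\gamma^d$ contribute generators whose signed count, organized by action, reproduces the term $\frac{(-1)^{\epsilon(\gamma^d)}}{d}t^{\mathcal{A}(\gamma^d)}$ after passing to the exponential via the product formula of Lemma~\ref{lem:productformula}; I would in fact phrase the invariant directly as the product-formula expression and show the filtered homology computes it factor-by-factor in action windows where only one simple orbit and its covers contribute. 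The third step is a limiting argument: the zeta function lives in $\Lambda_\Q$, whose elements are determined by their truncations to $(-\infty,L)$ for all $L$, and each such truncation is determined by $SH^{S^1,L}$; taking $L\to\infty$ gives $\zeta(X,\lambda)=\zeta(X',\lambda')$.

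The main obstacle is the bookkeeping in the chain-level computation: one must check that the graded Euler characteristic of the filtered $S^1$-equivariant symplectic homology complex, which a priori depends on the auxiliary Morse perturbations of the degenerate $S^1$-families of Reeb orbits and on the grading conventions (Conley--Zehnder indices, including the shift by $n$ and the subtleties of the $S^1$-equivariant grading), is independent of these choices and equals the combinatorial expression in \eqref{eqn:productformula}. The sign $(-1)^{\epsilon(\gamma)}$ must be matched with the parity of the Conley--Zehnder index of $\gamma$ via the standard relation $\epsilon(\gamma)=(-1)^{\CZ(\gamma)-n+1}$ (in the relevant trivialization), and the transition between the ``$\exp$ of a sum over all orbits'' form \eqref{eqn:defzeta} and the ``product over simple orbits'' form \eqref{eqn:productformula} — already handled abstractly by Lemma~\ref{lem:productformula} — must be reconciled with how iterated orbits enter the $S^1$-equivariant complex. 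Once the identity ``graded Euler characteristic of $SH^{S^1,L}$ $=$ truncation of $\zeta$ to action $<L$'' is established for nondegenerate Liouville domains, the theorem follows immediately from the functoriality of filtered $S^1$-equivariant symplectic homology.
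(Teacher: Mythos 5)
Your overall strategy---filtered positive $S^1$-equivariant symplectic homology with its action filtration, invariance under exact symplectomorphism of the interiors, and extraction of $\zeta$ from Euler-characteristic-versus-action data---is the same as the paper's, but your key identification is wrong as stated, and the fix is a genuine piece of missing content. Over $\Q$, the filtered complex computing $CH^L(X,\lambda)$ has exactly one generator for each \emph{good} Reeb orbit of action at most $e^L$ (bad orbits do not appear at all), so what the persistence module determines at each action value is the signed count $\sum_{\gamma\ \mathrm{good},\ \mathcal{A}(\gamma)=L}(-1)^{\epsilon(\gamma)}$, not the series $\sum_\gamma \frac{(-1)^{\epsilon(\gamma)}}{d(\gamma)}t^{\mathcal{A}(\gamma)}$ you claim to read off: there are no $1/d(\gamma)$ weights, and bad orbits are invisible. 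Passing from the good-orbit generating function to $\zeta(X,\lambda)$, which does encode covering multiplicities and all orbits, is exactly where the paper's Lemma~\ref{lem:algebraic} enters: one applies the transformation $\Theta$ of Definition~\ref{def:Thetadefinition}, built from the M\"obius function, and verifies case-by-case (using the parity pattern \eqref{eqn:iterateparity} of $\epsilon(\gamma^d)$, which also governs which covers are bad) that $\Theta$ of the good-orbit series equals the product formula \eqref{eqn:productformula}. Your fallback plan---matching the product ``factor-by-factor in action windows where only one simple orbit and its covers contribute''---does not work, because the covers of distinct simple orbits interleave arbitrarily in action and a single coefficient of $\zeta$ mixes contributions from many simple orbits; some form of M\"obius inversion is unavoidable, and supplying it is the heart of the argument you have left out.

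A second, smaller issue: the invariance statement you propose to cite---that an exact symplectomorphism of the open interiors induces isomorphisms of the filtered equivariant theories compatible with the persistence maps---is not an off-the-shelf result. The literature provides transfer morphisms for exact symplectic embeddings of compact Liouville domains into interiors; upgrading this to invariance under symplectomorphisms of the open interiors is the content of the paper's inverse-limit construction over nondegenerate exhaustions (\S\ref{sec:inverselimit}, Corollary~\ref{cor:openinvariance}), together with the comparison back to $CH(X,\lambda)$ in the nondegenerate case. This part of your outline is repairable by that construction, but it should not be treated as a citation. Note also that what is invariant is the isomorphism class of the persistence module, so the Euler-characteristic jumps must be formulated at the level of filtered homology (as in the paper's six-term exact sequence argument), not as a chain-level count depending on auxiliary Morse--Bott perturbations.
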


In particular, if $(X,\lambda)$ is a Liouville domain such that $H^1(X;\R)=0$, then the zeta function on the boundary of $X$ is determined by the symplectomorphism type of the interior of $X$.

\begin{remark}
(a) If there is an exact symplectomorphism of closed domains $(X,d\lambda)\stackrel{\simeq}{\longrightarrow} (X',d\lambda')$, then it follows directly that $\zeta(X,\lambda)=\zeta(X',\lambda')$, because the restriction of the symplectomorphism to the boundaries preserves the direction of the Reeb flow, and the symplectic actions of Reeb orbits are preserved. The nontrivial aspect of Theorem~\ref{thm:nondegenerate} is that we are starting with an exact symplectomorphism of open domains.

(b) Without the assumption that the symplectomorphism $\phi$ is exact, Theorem~\ref{thm:nondegenerate} is not true as stated, although it may be possible to prove a more complicated statement. For example, if the restriction map $H^1(X;\R)\to H^1(\partial X;\R)$ is nonzero, let $\alpha$ be a closed $1$-form on $X$ whose restriction to $\partial X$ is not exact. If $\epsilon>0$ is sufficiently small, then $\lambda' = \lambda + \epsilon\alpha$ is also a Liouville form on $X$. Then $\phi=\op{id}_X$ is a nonexact symplectomorphism $(X,d\lambda)\stackrel{\simeq}{\to}(X,d\lambda')$. The Reeb orbits of $\lambda$ and $\lambda'$ on $\partial X$ are the same up to reparametrization, but the symplectic actions are different for Reeb orbits in homology classes that pair nontrivially with $\alpha$.
\end{remark}

\begin{remark}
\label{rem:90s}
The relation of the interior of a compact symplectic domain to its boundary has been studied by various authors especially in the 1990s. (See also \cite{cgh} for recent results about open domains that ``do not have a well-defined symplectic boundary''.) In particular, results of Cieliebak-Floer-Hofer-Wysocki \cite{cfhw} using symplectic homology imply that under the hypotheses of Theorem~\ref{thm:nondegenerate}, the multiset of pairs $(\mathcal{A}(\gamma),\epsilon(\gamma))$, where $\gamma$ is a Reeb orbit of $\lambda$, is the same as the multiset of such pairs for $\lambda'$. Thus most of the ingredients in the zeta functions \eqref{eqn:defzeta} for $\lambda$ and $\lambda'$ are the same; however this approach does not detect the covering multiplicities $d(\gamma)$. Our approach will recover this last bit of information using {\em equivariant\/} symplectic homology.
\end{remark}


\subsection{Zeta functions of dynamically tame Liouville domains}

In \S\ref{sec:dynamicallytame} we will extend the definition of the zeta function from nondegenerate Liouville domains to a larger class of Liouville domains which we call ``dynamically tame''. Roughly speaking, a Liouville domain is dynamically tame if the Euler characteristic of the filtered equivariant symplectic homology changes only finitely many times below any given action level. For example, if the set of actions of Reeb orbits on the boundary is discrete, then the Liouville domain is dynamically tame; see Example~\ref{ex:discretespectrum}. The zeta function then encodes these changes in Euler characteristic. See \S\ref{sec:dynamicallytame} for the precise definitions.

Theorem~\ref{thm:nondegenerate} is now a special case of the following:

\begin{theorem}
\label{thm:tame}
(proved in \S\ref{sec:dynamicallytame})
Let $(X,\lambda)$ and $(X',\lambda')$ be dynamically tame Liouville domains. Suppose there exists an exact symplectomorphism
\[
\phi: (\op{int}(X),d\lambda) \stackrel{\simeq}{\longrightarrow} (\op{int}(X'),d\lambda').
\]
Then
\[
\zeta(X,\lambda) = \zeta(X',\lambda').
\]
\end{theorem}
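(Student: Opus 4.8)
The plan is to reduce Theorem~\ref{thm:tame} to Theorem~\ref{thm:nondegenerate} by an approximation argument, using the fact that the zeta function of a dynamically tame Liouville domain is — by its definition in \S\ref{sec:dynamicallytame} — computed from the Euler characteristics of filtered equivariant symplectic homology $SH_*^{S^1,\le L}$, which is an invariant of the completion of $(\op{int}(X),d\lambda)$ and is finitely generated below any action level after a small perturbation. Concretely, I would first recall that for any Liouville domain $(X,\lambda)$ and any generic compactly supported perturbation of the Liouville form near the boundary, one obtains a nearby nondegenerate contact form on $\partial X$; the resulting filtered equivariant symplectic homology below a given action level $L$ is independent of the choice of such perturbation (for the perturbation small enough depending on $L$), by the usual continuation/direct-limit machinery of \cite{cfhw} and its equivariant refinements. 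The dynamical tameness hypothesis is precisely what guarantees that these stabilized Euler characteristics, assembled over all $L$, patch together into a well-defined element $\zeta(X,\lambda)\in\Lambda_\Q$ that agrees with \eqref{eqn:defzeta} when $\lambda$ is already nondegenerate.

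The second step is to transport this picture across $\phi$. Given the exact symplectomorphism $\phi\colon(\op{int}(X),d\lambda)\xrightarrow{\simeq}(\op{int}(X'),d\lambda')$, I would use exactness to arrange that $\phi^*\lambda' - \lambda = df$ for some $f\in C^\infty(\op{int}(X))$, and then, after composing with Liouville flows near the two boundaries, identify the symplectic completions of the interiors in a way that matches the symplectic homology filtrations up to a uniformly bounded shift; this is the standard argument (cf.\ Remark~\ref{rem:90s}) showing that filtered equivariant symplectic homology of the interior, with its action filtration, is an exact-symplectomorphism invariant at the level needed here. Thus for every $L$ the $\Z$-module $SH_*^{S^1,\le L}$, together with its canonical mod~$2$ grading, and hence its Euler characteristic, is the same for $(X,\lambda)$ and $(X',\lambda')$. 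Since $\zeta(X,\lambda)$ is built solely from this data by the construction of \S\ref{sec:dynamicallytame}, we conclude $\zeta(X,\lambda)=\zeta(X',\lambda')$.

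The remaining work is bookkeeping: one must check that the construction of $\zeta$ in \S\ref{sec:dynamicallytame} really only depends on the filtered equivariant symplectic homology of the interior and not on a choice of boundary contact form, that the mod~$2$ grading used to define Euler characteristics is intrinsic, and that dynamical tameness is itself an invariant of the open symplectic manifold so that the hypothesis on $(X,\lambda)$ automatically transfers to $(X',\lambda')$. I would package the first and third of these as lemmas proved alongside the definition in \S\ref{sec:dynamicallytame}, so that the proof of Theorem~\ref{thm:tame} becomes a short deduction.

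The main obstacle I expect is the careful treatment of the action filtration under $\phi$: an exact symplectomorphism of \emph{open} domains need not extend to the boundary, and its effect on the filtration by symplectic action is only controlled up to the normalization ambiguity coming from the primitive $f$ (which may be unbounded). One must show that this ambiguity produces only a global, $L$-independent shift that cancels in the final comparison of zeta functions — equivalently, that the \emph{differences} in Euler characteristic across action levels, which is what $\zeta$ records, are genuinely invariant. Handling this cleanly, presumably by working with the completion and a cofinal family of admissible Hamiltonians adapted to both $\lambda$ and $\phi^*\lambda'$, is where the real content of the argument lies; everything else is a formal consequence of Theorem~\ref{thm:nondegenerate} together with the definitions.
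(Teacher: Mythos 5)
Your outline correctly identifies that $\zeta$ should be extracted from Euler characteristics of filtered equivariant symplectic homology attached to the open interior, but the step you yourself flag as ``where the real content of the argument lies'' is a genuine gap, and your proposed way around it does not work. You suggest that the effect of $\phi$ on the action filtration is controlled only up to an ambiguity coming from the primitive $f$ of $\phi^*\lambda'-\lambda$, and that this produces ``only a global, $L$-independent shift that cancels in the final comparison of zeta functions.'' That cannot be the resolution: by Definition~\ref{def:zetadt} the zeta function records the actual action values as exponents in the Novikov ring (the jump at level $L$ contributes through $t^{e^L}$ after applying $\Theta$), so a nontrivial global shift of the filtration would rescale all exponents and change $\zeta$ --- it does not cancel. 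What has to be proved is that there is \emph{no} shift at all, and your proposal gives no mechanism for this; appealing to ``the standard argument (cf.\ Remark~\ref{rem:90s})'' is not enough, since the Cieliebak--Floer--Hofer--Wysocki-type results cited there are exactly the kind of statement that does not by itself pin down the filtration data the zeta function needs.

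The paper closes this gap by a different device than completions and cofinal Hamiltonians: the inverse limit $\mathring{CH}^L(X,\lambda)$ of Definition~\ref{def:mathring}, taken over all exact symplectic embeddings $\varphi:(W,\mu)\hookrightarrow(\op{int}(X),\lambda)$ of \emph{nondegenerate} Liouville domains, with transfer morphisms as structure maps. The point is that the filtration on each $CH^L(W,\mu)$ is intrinsic to the compact nondegenerate domain $(W,\mu)$, and exactness of $\phi$ guarantees precisely that $\phi\circ\varphi$ is again an exact embedding into $\op{int}(X')$; so $\phi$ induces, by pure functoriality of transfer maps (Proposition~\ref{prop:openmorphism}), isomorphisms $\mathring{CH}^L(X',\lambda')\simeq\mathring{CH}^L(X,\lambda)$ compatible with the persistence maps and preserving the action level $L$ on the nose, with no global control on $f$ required. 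Your first step (perturbing the contact form near $\partial X$ and showing the filtered Euler characteristics stabilize) is also not free: it is essentially Proposition~\ref{prop:approx} (computation of $\mathring{CH}^L$ by $L$-approximations when $e^L\notin\op{Spec}(\partial X,\lambda)$), which needs the scaling and transfer properties of Proposition~\ref{prop:CHL} and is part of how $\zeta$ is \emph{defined} in the tame case, rather than something that can be quoted. Once these two ingredients are in place, the theorem is immediate from Definition~\ref{def:zetadt}, and no reduction to Theorem~\ref{thm:nondegenerate} is needed; as written, your argument defers exactly the step that distinguishes Theorem~\ref{thm:tame} from the closed-domain case.
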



\subsection{Examples in four dimensions}
\label{sec:introexamples}

The following are some computations of the zeta function for basic examples of four-dimensional Liouville domains. 

\begin{example}
Let $\Sigma$ be a closed surface, and let $g$ be a Riemannian metric on $\Sigma$. Let $D^*(\Sigma,g)$ denote the unit disk bundle in $T^*\Sigma$. Then there is a canonical Liouville form on $D^*(\Sigma,g)$, such that Reeb orbits on $\partial D^*(\Sigma, g)$ correspond to oriented closed geodesics on $\Sigma$, and symplectic action corresponds to Riemannian length. Suppose now that the metric $g$ has negative curvature (which is possible when $\Sigma$ has genus greater than $1$). Then the contact form on $\partial D^*(\Sigma,g)$ is nondegenerate, and every Reeb orbit is positive hyperbolic. It then follows from equation \eqref{eqn:3dimproduct} that
\begin{equation}
\label{eqn:geodesics}
\zeta(D^*(\Sigma,g)) = \prod_{\mbox{\scriptsize $\gamma$ unoriented simple closed geodesic}}\left(1-t^{\ell(\gamma)}\right)^2
\end{equation}
where $\ell(\gamma)$ denotes the length of $\gamma$. Here the factors in \eqref{eqn:geodesics} are squared because each unoriented simple closed geodesic $\gamma$ gives rise to two simple Reeb orbits.
\end{example}

We now consider the toric domains discussed in \S\ref{sec:motivating}.

\begin{proposition}
\label{prop:toric}
(proved in \S\ref{sec:computations})
Let $X_\Omega\subset\R^4$ be a star-shaped toric domain as in Definition~\ref{def:osstd}, such that $\partial_+\Omega$ is a curve from $(a,0)$ to $(0,b)$. Then $X_\Omega$ is dynamically tame, and
\begin{equation}
\label{eqn:zetatoric}
\zeta(X_\Omega) = \frac{1}{(1-t^a)(1-t^b)}.
\end{equation}
\end{proposition}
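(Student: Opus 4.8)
The plan is to reduce the computation to understanding the Reeb dynamics on $\partial X_\Omega$ and then apply the zeta function formalism. First I would recall the standard description of the Reeb flow on the boundary of a toric domain: away from the two axes (where the torus action degenerates), $\partial X_\Omega$ is foliated by invariant tori $T^2$, and the Reeb vector field on the torus over a point $(x_1,x_2)\in\partial_+\Omega$ is (up to scaling) tangent to these tori, with direction determined by the outward normal $(\nu_1,\nu_2)$ to $\partial_+\Omega$ at that point. A Reeb orbit closes up precisely when the slope $\nu_2/\nu_1$ is rational; when it does, the whole torus is filled with an $S^1$-family of Reeb orbits, so the contact form is highly degenerate. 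The two special closed Reeb orbits that persist are the two circles $\gamma_1 = \{z_2 = 0\}\cap\partial X_\Omega$ and $\gamma_2 = \{z_1=0\}\cap\partial X_\Omega$, lying over the endpoints $(a,0)$ and $(0,b)$ of $\partial_+\Omega$; their symplectic actions are $a$ and $b$ respectively (this is where the ellipsoid computation, with $E(a,b)$ having these as its two short orbits, serves as the model case).

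Because of the $S^1$-families, $X_\Omega$ is not nondegenerate, so I cannot use Definition~\ref{def:zetanondegenerate} directly; instead I must verify dynamical tameness and then compute the zeta function via the filtered equivariant symplectic homology definition from \S\ref{sec:dynamicallytame}. The cleanest route is to first handle the case of an ellipsoid $E(a,b)$ with $a/b$ irrational, which is nondegenerate: its only Reeb orbits are the iterates $\gamma_1^k$ and $\gamma_2^k$, all elliptic, so by the three-dimensional product formula \eqref{eqn:3dimproduct} one gets exactly $\zeta(E(a,b)) = (1-t^a)^{-1}(1-t^b)^{-1}$. Then, for a general star-shaped toric domain $X_\Omega$, I would argue that the equivariant symplectic homology, and hence the zeta function, depends only on data that is constant along a path of toric domains — or more precisely, invoke the invariance machinery: any $X_\Omega$ has its interior exact-symplectomorphic to (a scaling of) the interior of a suitable ellipsoid? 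That is false in general, so instead I would directly compute the filtered equivariant symplectic homology of $X_\Omega$. The key input is that $ESH$ of a toric domain in $\R^4$ is well understood (it is the same as that of $\R^4$ truncated by action, with the action filtration determined by the "combinatorial" Reeb orbits over the axes together with the $S^1$-families), and the contributions of the $S^1$-families of degenerate orbits cancel in Euler characteristic — this is exactly the mechanism that makes $X_\Omega$ dynamically tame and forces the zeta function to see only $\gamma_1$ and $\gamma_2$.

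Concretely the steps are: (1) describe the Reeb flow on $\partial X_\Omega$ and identify $\gamma_1,\gamma_2$ with actions $a,b$; (2) show the only "rigid" contributions to filtered equivariant symplectic homology come from iterates of $\gamma_1$ and $\gamma_2$, with the $S^1$-families contributing in cancelling pairs so that the Euler characteristic of $ESH^{<L}$ changes only at the discrete set $\{ia + jb\}$ — this establishes dynamical tameness (alternatively cite Example~\ref{ex:discretespectrum} after checking the action spectrum is discrete, which it is not in general, so the $ESH$ argument is needed); (3) compute the jumps in Euler characteristic and assemble them, via the definition of $\zeta$ for dynamically tame domains, into $\sum_{i,j\ge 0} t^{ia+jb} = (1-t^a)^{-1}(1-t^b)^{-1}$. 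The main obstacle I expect is step (2): carefully controlling the equivariant symplectic homology in the presence of the $S^1$-families of degenerate Reeb orbits and showing their net contribution to the Euler characteristic vanishes at every action level. This likely requires either a Morse–Bott perturbation argument on each family (a circle of orbits perturbs to an elliptic–hyperbolic pair whose Lefschetz signs are opposite, hence cancel) or a direct appeal to the known $S^1$-equivariant symplectic homology of ellipsoids/toric domains as a module, together with continuity of filtered $ESH$ under $C^0$-small deformations of $\Omega$. Getting the signs and the bookkeeping of which families appear below a given action $L$ exactly right is the delicate part.
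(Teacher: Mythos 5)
Your proposal is correct and essentially matches the paper's proof: the paper likewise lists the Reeb orbits (the two orbits of action $a$ and $b$ over the endpoints of $\partial_+\Omega$ plus Morse--Bott tori over rational-slope points), passes to a nondegenerate $L$-approximation via Proposition~\ref{prop:approx}, observes that each perturbed Morse--Bott circle contributes an elliptic/positive-hyperbolic pair cancelling in Euler characteristic, so that $\chi^L(X_\Omega)=\floor{e^L/a}+\floor{e^L/b}$, which gives dynamical tameness with jumps only at the actions $da$ and $db$. The final formula is then obtained by applying the transform $\Theta$ of Definition~\ref{def:Thetadefinition} (using the identity \eqref{eqn:algebraic1}) to the jump record, which is exactly your step (3) with the intermediate $\Theta$-step made explicit.
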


\begin{remark}
We expect that an analogue of Proposition~\ref{prop:toric} also holds for toric domains in higher dimensions.
\end{remark}

Equation \eqref{eqn:zetatoric} shows that the zeta function of a star-shaped toric domain has a very special form, and this allows us to detect many counterexamples for Question~\ref{question:motivating}. For example:

\begin{proof}[Proof of Proposition~\ref{prop:motivating}.]
Let $X\subset\R^4$ be a smooth star-shaped domain with a simple positive hyperbolic Reeb orbit $\gamma$ satisfying the hypotheses of the proposition. It follows from equation \eqref{eqn:3dimproduct} that $t^{\mathcal{A}(\gamma)}$ has coefficient $-1$ in $\zeta(X)$. On the other hand, if $X_\Omega\subset\R^4$ is a star-shaped toric domain, then by Proposition~\ref{prop:toric} and equation \eqref{eqn:novikovinverse}, all coefficients in $\zeta(X_\Omega)$ are nonnegative. It then follows from Theorem~\ref{thm:tame} that $\op{int}(X)$ cannot be symplectomorphic to $\op{int}(X_\Omega)$.  
\end{proof}

We can generalize this example to generic $S^1$-invariant star-shaped domains in $\R^4$. Let $f:\C P^1\to\R$ be a Morse function (not necessarily $C^2$-small), and let $X_f\subset\R^4$ be the smooth star-shaped domain defined by equation \eqref{eqn:Xf}. Let $\op{Crit}(f)$ denote the set of critical points of $f$. For $p\in\op{Crit}(f)$, let $\op{ind}(p)\in\{0,1,2\}$ denote the Morse index of $p$.

\begin{proposition}
\label{prop:s1inv}
(proved in \S\ref{sec:computations})
If $f:\C P^1\to\R$ is a Morse function, then $X_f$ is dynamically tame, and
\begin{equation}
\label{eqn:zetaXf}
\zeta(X_f) = \prod_{p\in\op{Crit}(f)}\left(1-t^{e^{f(p)}}\right)^{(-1)^{\op{ind}(p)-1}}.
\end{equation}
\end{proposition}

\begin{remark}
We suspect that the answer to the following question is no, although Proposition~\ref{prop:s1inv} does not seem sufficient to prove this:
Is every bounded open convex set in $\R^4$ with smooth boundary symplectomorphic to the interior of an $S^1$-invariant domain of the form $X_f$, where $f:\C P^1\to\R$ is a smooth function (not necessarily Morse)?
\end{remark}


\subsection{Outline of the rest of the paper}

Section~\ref{sec:prelim} reviews background on persistence modules (slightly modified for our situation) and defines a zeta function for persistence modules. Section \ref{sec:nondegenerate} reviews the filtered equivariant symplectic homology of nondegenerate Liouville domains. Section~\ref{sec:inverselimit} uses an inverse limit construction to extend the definition of filtered equivariant symplectic homology to all Liouville domains, and to show that it is invariant under exact symplectomorphisms of the interior. Section~\ref{sec:algebraic} uses some algebra to deduce invariance of the zeta function for open nondegenerate Liouville domains (Theorem~\ref{thm:nondegenerate}). Section~\ref{sec:degeneratecase} investigates the degenerate case in more detail, introduces the ``dynamically tame'' condition, and proves Theorem~\ref{thm:tame}. Finally, Section~\ref{sec:computations} computes examples of zeta functions (Propositions~\ref{prop:toric} and \ref{prop:s1inv}).

\begin{remark}
The definition of the zeta function in this paper does not require the full persistence module structure of equivariant symplectic homology, e.g.\ the lengths of bars; see the discussion in \S\ref{sec:zetabarcode}. In a followup paper, we plan to use more of the persistence module structure of equivariant symplectic homology to classify some open toric domains up to symplectomorphism.
\end{remark}

\paragraph{Acknowledgments.} The author thanks the participants of the conferences ``Persistence Homology in Symplectic and Contact Topology'' in Albi in June 2023, and ``Workshop on Conservative Dynamics and Symplectic Geometry'' in IMPA in August 2023, for helpful feedback on preliminary versions of this work. The author thanks the anonymous referee for many helpful comments.



\section{Algebraic preliminaries}
\label{sec:prelim}

\subsection{Persistence modules}
\label{sec:permod}

We now review what we will need to know about persistence modules.  The following are slight variants of standard definitions, as given for example in the book \cite{polterovich}.

\begin{notation}
If $V$ and $W$ are vector spaces over a field $\mathbb{F}$ with chosen bases, if $A:V\to W$ is a linear map, and if $x\in V$ and $y\in W$ are basis elements, then $\langle Ax,y\rangle\in\mathbb{F}$ denotes the coefficient of $y$ when $Ax$ is written as a linear combination of the basis elements of $W$.
\end{notation}

\begin{definition}
\label{def:pm}
A ($\Z/2$-graded, locally finite) {\bf persistence module\/} over a field $\mathbb{F}$ is a functor $\mathcal{H}$ from the ordered set $\R$ to the category of $\Z/2$-graded finite dimensional vector spaces over $\mathbb{F}$. Concretely, this consists of:
\begin{itemize}
\item
finite dimensional $\Z/2$-graded $\mathbb{F}$-vector spaces $\mathcal{H}^L=\mathcal{H}^L_0\oplus\mathcal{H}^L_1$ for each $L\in\R$,
\item
linear maps $\mathcal{H}^{L_2,L_1}:\mathcal{H}^{L_1}\to\mathcal{H}^{L_2}$ preserving the $\Z/2$-grading for each pair of real numbers $L_1,L_2$ with $L_1\le L_2$,
\end{itemize}
such that
\begin{itemize}
\item
$\mathcal{H}^{L,L}=\op{id}_{\mathcal{H}^L}$,
\item
$\mathcal{H}^{L_3,L_1}=\mathcal{H}^{L_3,L_2}\circ\mathcal{H}^{L_2,L_1}$ when $L_1\le L_2\le L_3$.
\end{itemize}
We further impose the following two conditions:
\begin{itemize}
\item (bounded from below)
$\mathcal{H}^{L}=\{0\}$ for $L\ll0$.
\item (proper)
There is a closed and discrete set $A\subset\R$ such that if $L_1\le L_2$ and if the interval $(L_1,L_2]\subset\R\setminus A$, then the map $\mathcal{H}^{L_2,L_1}$ is an isomorphism.
\end{itemize} 
\end{definition}

A basic example of a persistence module arises as follows.

\begin{definition}
\label{def:fcc}
A ($\Z/2$-graded, locally finite) {\bf $\R$-filtered chain complex\/} over a field $\mathbb{F}$ is 
a chain complex $(C,\partial)$ over $\mathbb{F}$ with a distinguished basis $\mathcal{B}$, a $\Z/2$-grading $\epsilon:\mathcal{B}\to\Z/2$, and a ``filtration'' $\mathcal{F}:\mathcal{B}\to\R$, with the following properties:
\begin{itemize}
\item
If $\langle \partial x,y\rangle \neq 0$, then $\epsilon(x)\neq \epsilon(y)$, and $\mathcal{F}(x) > \mathcal{F}(y)$.
\item
For each $L\in\R$, there are only finitely many $x\in\mathcal{B}$ with $\mathcal{F}(x)\le L$.
\end{itemize}
\end{definition}

\begin{notation}
\label{not:fh}
Let $(C,\partial,\mathcal{B},\epsilon,\mathcal{F})$ be an $\R$-filtered chain complex over $\mathbb{F}$ as in Definition~\ref{def:fcc}. For each $L\in\R$, there is a subcomplex $C^{\mathcal{F}\le L}$ given by the span of the basis elements $x\in\mathcal{B}$ with $\mathcal{F}(x)\le L$. Let $H_*^{\mathcal{F}\le L}(C,\partial)$ denote the homology of this subcomplex. For $L_1\le L_2$, let
\begin{equation}
\label{eqn:iim}
\imath^{L_2,L_1}: H_*^{\mathcal{F}\le L_1}(C,\partial) \longrightarrow H_*^{\mathcal{F}\le L_2}(C,\partial)
\end{equation}
denote the map on homology induced by the inclusion of chain complexes.
\end{notation}

\begin{example}
\label{ex:fcc}
Let $(C,\partial,\mathcal{B},\epsilon,\mathcal{F})$ be an $\R$-filtered chain complex over $\mathbb{F}$ as in Definition~\ref{def:fcc}. We can then define a persistence module as in Definition~\ref{def:pm} by setting $\mathcal{H}^L= H_*^{\mathcal{F}\le L}(C,\partial)$, with the $\Z/2$-grading given by $\epsilon$, and $\mathcal{H}^{L_2,L_1} = \imath^{L_2,L_1}$.  Here the properness condition in Definition~\ref{def:pm} is fulfilled by $A=\{\mathcal{F}(x)\mid x\in\mathcal{B}\}$.
\end{example}

\begin{definition}
\label{def:morphism}
If $\mathcal{G}$ is another persistence module as in Definition~\ref{def:pm}, a {\bf morphism\/} $\phi:\mathcal{G}\to\mathcal{H}$ is a natural transformation of functors. Concretely, this consists of linear maps $\phi^L:\mathcal{G}^L\to \mathcal{H}^L$ preserving the $\Z/2$ grading for each $L\in\R$, such that if $L_1\le L_2$ then
\[
\mathcal{H}^{L_2,L_1}\circ\phi^{L_1} = \phi^{L_2}\circ \mathcal{G}^{L_2,L_1}: \mathcal{G}^{L_1} \longrightarrow \mathcal{H}^{L_2}.
\]
\end{definition}

\begin{example}
Continuing with Example~\ref{ex:fcc}, suppose that $(C',\partial',\mathcal{B}',\epsilon',\mathcal{F}')$ is another $\R$-filtered chain complex as in Definition~\ref{def:fcc}, and let $\mathcal{G}$ denote the associated persistence module. Let $\phi:C'\to C$ be a chain map such that if $x\in\mathcal{B}'$, if $y\in\mathcal{B}$, and if $\langle\phi x,y\rangle\neq 0$, then $\epsilon'(x)=\epsilon(y)$ and $\mathcal{F}'(x)\ge \mathcal{F}(y)$.  Then the induced maps on homology $\phi^L:\mathcal{G}^L\to \mathcal{H}^L$ constitute a morphism as in Definition~\ref{def:morphism}.
\end{example}

\begin{notation}
Let $\op{PerMod}_{\mathbb F}$ denote the set of persistence modules over $\mathbb{F}$ as in Definition~\ref{def:pm}, modulo isomorphisms (i.e.\ invertible morphisms as in Definition~\ref{def:morphism}).
\end{notation}

\begin{definition}
\label{def:shift}
If $\mathcal{H}$ is a persistence module as in Definition~\ref{def:pm}, and if $\delta\in\R$, the {\bf shift\/} of $\mathcal{H}$ by $\delta$ is the persistence module $\mathcal{H}[\delta]$ defined as follows. For $L\in\R$ we set $\mathcal{H}[\delta]^L=\mathcal{H}^{L+\delta}$, and for $L_1\le L_2$ we set
\[
\mathcal{H}[\delta]^{L_2,L_1}=\mathcal{H}^{L_2+\delta,L_1+\delta}: \mathcal{H}^{L_1+\delta} \longrightarrow \mathcal{H}^{L_2+\delta}.
\]
If $\delta\ge 0$, we define the {\bf shift morphism\/}
\begin{equation}
\label{eqn:shiftmorphism}
\eta_\delta: \mathcal{H} \longrightarrow \mathcal{H}[\delta]
\end{equation}
to be the morphism of persistence modules given by
\[
\eta_\delta^L = \mathcal{H}^{L+\delta,L}: \mathcal{H}^L \longrightarrow \mathcal{H}^{L+\delta} = \mathcal{H}[\delta]^L.
\]
\end{definition}


\subsection{The zeta function of a persistence module}
\label{sec:zetapm}

We now define a zeta function for persistence modules, in preparation for discussing zeta functions of Liouville domains.

Let $\mathcal{H}$ be a persistence module. Recall from Definition~\ref{def:pm} that there is a closed and discrete set $A\subset \R$ such that if $L_1\le L_2$ and if the interval $(L_1,L_2]\subset\R\setminus A$, then the map $\mathcal{H}^{L_2,L_1}$ is an isomorphism. This implies that for every $L\in\R$, the $\Z/2$-graded vector spaces $\mathcal{H}^{L-\delta}$ for $\delta>0$ sufficiently small are canonically isomorphic, via the maps $\mathcal{H}^{L-\delta_1,L-\delta_2}$ for $\delta_1\le \delta_2$, to a single $\Z/2$-graded vector space, which we denote by $\mathcal{H}^{L,-}$. Similarly, the $\Z/2$-graded vector spaces $\mathcal{H}^{L+\delta}$ for $\delta\ge 0$ sufficiently small are canonically isomorphic to $\mathcal{H}^{L}$.

\begin{definition}
\label{def:DeltaL}
If $\mathcal{H}$ is a persistence module and if $L\in\R$, then using the notation above, define the {\bf Euler characteristic jump\/}
\begin{equation}
\label{eqn:DeltaL}
\Delta_L(\mathcal{H}) = \left(\dim(\mathcal{H}^{L}_0) - \dim(\mathcal{H}^{L}_1)\right) - \left(\dim(\mathcal{H}^{L,-}_0) - \dim(\mathcal{H}^{L,-}_1)\right) \in\Z.
\end{equation}
\end{definition}

Note that if $L\notin A$, then $\mathcal{H}_\epsilon^{L}\simeq \mathcal{H}_\epsilon^{L,-}$ so $\Delta_L(\mathcal{H})=0$.

\begin{definition}
\label{def:zetapm}
We define a function
\[
\zeta : \op{PerMod}_{\mathbb{F}}\longrightarrow \Lambda
\]
by
\begin{equation}
\label{eqn:zetapm}
\zeta(\mathcal{H}) = \sum_{L\in\R}\Delta_L(\mathcal{H})t^L.
\end{equation}
\end{definition}

Note that equation \eqref{eqn:zetapm} is invariant under isomorphisms of persistence modules, by equation \eqref{eqn:DeltaL}. 

\subsection{Digression: zeta functions in terms of barcodes}
\label{sec:zetabarcode}

A fundamental result is that persistence modules modulo isomorphism are equivalent to ``barcodes''. We now describe how this works for our version of ``persistence module'' and describe the zeta function in terms of barcodes. The material in this subsection is included for additional context and is not needed in the rest of the paper.

\begin{definition}
A ($\Z/2$-graded) {\bf bar\/} is a pair $(I,\epsilon)$ where $I$ is an interval of the form $[a,b)$ with $a,b\in\R$ and $a<b$ or $[a,\infty)$ with $a\in\R$, and $\epsilon\in\Z/2$. Let $\op{Bar}$ denote the set of bars.
\end{definition}

\begin{definition}
\label{def:barcode}
A (locally finite, proper, $\Z/2$-graded) {\bf barcode\/} is a function
\[
f:\op{Bar}\longrightarrow \Z^{\ge 0}
\]
such that:
\begin{description}
\item{(*)}
For each $L\in\R$, there are only finitely many bars $(I,\epsilon)\in\op{Bar}$ such that $f(I,\epsilon)>0$ and the lower endpoint of the interval $I$ is less than or equal to $L$.
\end{description}
We let $\op{Barcodes}$ denote the set of barcodes.
\end{definition}

We can regard a barcode $f$ as a multiset of bars $(I,\epsilon)\in\op{Bar}$, where $f(I,\epsilon)$ is the number of times that the bar $(I,\epsilon)$ appears in the multiset.

\begin{definition}
Fix a field $\mathbb{F}$. Given a bar $(I,\epsilon)\in\op{Bar}$, define a persistence module $\mathcal{H}=\mathcal{H}_{\mathbb F}(I,\epsilon)$ over $\mathbb{F}$ as follows.
\begin{itemize}
\item
If $\epsilon'\neq\epsilon$ then $\mathcal{H}_{\epsilon'}^L=\{0\}$ for all $L\in\R$.
\item
We set
\[
\mathcal{H}_\epsilon^L=\left\{\begin{array}{cl} \mathbb{F}, & L\in I,\\
0, & L\notin I.
\end{array}\right.
\]
\item
For $L_1\le L_2$, the map $\mathcal{H}^{L_2,L_1}$ is the identity on $\mathbb{F}$ when $L_1,L_2\in I$, and $0$ otherwise.
\end{itemize}
\end{definition}

\begin{definition}
We define a map
\begin{equation}
\label{eqn:normalform}
\Phi_{\mathbb F}: \op{Barcodes} \longrightarrow \op{PerMod}_{\mathbb F}
\end{equation}
as follows. If $f:\op{Bar}\to\Z^{\ge 0}$ is a barcode, then
\[
\Phi_{\mathbb F}(f) = \bigoplus_{(I,\epsilon)\in\op{Bar}} \bigoplus_{f(I,\epsilon)}\mathcal{H}_{\mathbb F}(I,\epsilon).
\]
\end{definition}

Note that $\Phi_{\mathbb F}(f)$ satisfies the boundedness and properness requirements in Definition~\ref{def:pm} as a consequence of condition (*) in Definition~\ref{def:barcode}.

We now have the following fundamental result. 

\begin{theorem}
\label{thm:fundamental}
The map $\Phi_{\mathbb F}$ is a bijection.
\end{theorem}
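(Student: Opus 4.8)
The plan is to prove Theorem~\ref{thm:fundamental} by constructing an explicit inverse to $\Phi_{\mathbb F}$, i.e.\ by showing that every persistence module $\mathcal{H}$ as in Definition~\ref{def:pm} decomposes, uniquely up to reordering, as a direct sum of interval modules $\mathcal{H}_{\mathbb F}(I,\epsilon)$. Since the $\Z/2$-grading is preserved by all structure maps, the persistence module splits as $\mathcal{H} = \mathcal{H}_0 \oplus \mathcal{H}_1$ with each $\mathcal{H}_\epsilon$ an ungraded persistence module, so it suffices to treat the ungraded case and then reattach the grading label $\epsilon$ to each resulting bar. So from now on I work with an ungraded, locally finite, bounded-below, proper persistence module.

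The key structural point is that properness lets us reduce to a discrete problem. Let $A = \{a_1 < a_2 < \cdots\}$ be the discrete set from Definition~\ref{def:pm}; bounded-below-ness plus discreteness means $A$ is order-isomorphic to a subset of $\N$ with no accumulation point from below, and $\mathcal{H}^L = 0$ for $L$ small. Choose points $L_i$ with $L_0 < a_1 < L_1 < a_2 < L_2 < \cdots$, so that $\mathcal{H}^{L_{i-1},L}$ is an isomorphism for $L \in (a_{i-1}, a_i)$ wait — more precisely each $\mathcal{H}^{L',L}$ is an isomorphism whenever $(L,L']$ misses $A$. Then the diagram $\mathcal{H}^{L_0} \to \mathcal{H}^{L_1} \to \mathcal{H}^{L_2} \to \cdots$ of finite-dimensional vector spaces determines $\mathcal{H}$ completely: for arbitrary $L$, pick the largest $i$ with $L_i \le L$ (or handle the $L < L_0$ case trivially) and identify $\mathcal{H}^L$ with $\mathcal{H}^{L_i}$ via the isomorphism induced on the complementary interval, compatibly with all structure maps. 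So the classification of our persistence modules reduces to the classification of sequences $V_0 \xrightarrow{f_1} V_1 \xrightarrow{f_2} V_2 \to \cdots$ of finite-dimensional $\mathbb F$-vector spaces with $V_i = 0$ for $i$ large negative — equivalently, representations of an $A_\infty$-type quiver with all arrows pointing the same way.

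For such a sequence the decomposition into intervals is the standard normal-form argument, which I would carry out directly rather than cite (though it is of course Gabriel/structure theory). Inductively build, for each $i$, a basis of $V_i$ adapted to the filtration by the images of the $f_j \circ \cdots$ and the kernels going forward: concretely, choose a basis of $V_i$ so that a subset of it maps bijectively onto a basis-compatible subset of $V_{i+1}$ and the rest maps to $0$, while also being compatible with the chosen basis of $\mathrm{im}(V_{i-1} \to V_i)$. Splitting off one basis vector at a time and tracking how long it survives under the forward maps produces the bars: a basis vector of $\mathcal{H}^{L_i}$ that first appears as the image from $L_{i-1}$ at stage $i$ wait — that first appears (is not in the image from the previous stage) and dies at stage $j$ (maps to $0$ going to $L_j$) or never dies, gives a bar with lower endpoint the relevant element of $A$ and upper endpoint the next one, or $\infty$. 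Taking the appropriate half-open intervals $[a,b)$ recovers exactly the interval modules $\mathcal{H}_{\mathbb F}(I,\epsilon)$. Uniqueness of the multiset of bars follows because the numbers $\dim \mathcal{H}^L$ and $\dim(\mathrm{im}\,\mathcal{H}^{L',L})$ for all $L \le L'$ are isomorphism invariants, and these determine, by inclusion-exclusion over the discrete set of endpoints, how many bars contain each interval $(L,L']$, hence the multiplicity $f(I,\epsilon)$ of every bar. Condition (*) in Definition~\ref{def:barcode} corresponds exactly to local finiteness together with bounded-below-ness, so the resulting $f$ is a genuine barcode, and $\Phi_{\mathbb F}(f) \cong \mathcal{H}$ by construction, giving surjectivity; injectivity is the uniqueness just described.

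The main obstacle is purely bookkeeping: making the inductive choice of adapted bases genuinely coherent across all stages $i$ simultaneously (each basis must be compatible both with the image coming in and with the kernel–image splitting going out), and then being careful about the half-open convention so that the bars $[a,b)$ patch together correctly at the points of $A$ and reproduce the interval modules on the nose rather than up to a shift. There is no deep difficulty — the finiteness hypotheses were arranged precisely so that this works — but the indexing of endpoints against the discrete set $A$, and checking the boundary cases at $L < L_0$ and at $\infty$, is where care is needed.
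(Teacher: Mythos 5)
Your argument is correct in outline and is in substance the same as the paper's: the paper proves Theorem~\ref{thm:fundamental} simply by citing the Normal Form Theorem of \cite[\S 2]{polterovich} and noting that its proof carries over, and that proof is exactly the discretization-plus-adapted-bases normal form argument (with uniqueness of multiplicities via rank counts) that you spell out. The only care needed is the bookkeeping you already flag --- choosing bases compatible simultaneously with the incoming images and the forward kernels, and the half-open endpoint convention, which works because properness makes the module canonically ``right-continuous'' at each jump point.
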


\begin{proof}
This is a slight variant of the ``Normal Form Theorem'' in \cite[\S2]{polterovich}.
The proof from \cite{polterovich} carries over directly.
\end{proof}

One can also show, as a slight variant of the ``Isometry Theorem'' in \cite[\S2]{polterovich}, that the map $\Phi_{\mathbb F}$ is an isometry with respect to the ``bottleneck distance'' on barcodes and the ``interleaving distance'' on persistence modules.

One can now define the zeta function of a barcode as follows. In short, {\em the zeta function records the endpoints of the bars, but forgets which endpoints are connected to each other by bars.\/}

\begin{definition}
\label{def:zetabar}
We define a map
\[
\zeta: \op{Barcodes} \longrightarrow \Lambda
\]
as follows. If $f:\op{Bar}\to\Z^{\ge 0}$ is a barcode, then
\[
\zeta(f) = \sum_{(I,\epsilon)\in\op{Bar}}
f(I,\epsilon)
\left\{\begin{array}{cl} (-1)^\epsilon t^a, & I=[a,\infty),\\
(-1)^\epsilon(t^a-t^b), & I=[a,b).
\end{array}
\right.
\]
\end{definition}

Note that condition (*) in Definition~\ref{def:barcode} implies that $\zeta(f)$ is a well defined element of the Novikov ring $\Lambda$. Note also that Definitions~\ref{def:zetabar} and \ref{def:zetapm} agree under the bijection \eqref{eqn:normalform}.

\begin{remark}
For finitely generated persistence modules, a similar notion of ``persistent magnitude'' is defined in \cite[Def.\ 5.4]{govc}. In our notation, the persistent magnitude is obtained by substituting $t=e^{-1}$ into the zeta function and then multiplying by $-1$. Some other related notions are defined in \cite[\S5.1]{bcz}.
\end{remark}



\section{Filtered equivariant symplectic homology of nondegenerate Liouville domains}
\label{sec:nondegenerate}

Let $(X,\lambda)$ be a nondegenerate Liouville domain, see Definition~\ref{def:nld}, and let $L\in \R$. Given these data, one can define the {\bf filtered positive $S^1$-equivariant symplectic homology\/}, which we denote\footnote{
The motivation for the notation $CH$ is that, as explained in \cite{bo}, positive $S^1$-equivariant symplectic homology is isomorphic to linearized contact homology (see e.g.\ \cite[\S3.1]{bee}), when sufficient transversality for holomorphic curves holds to define the latter. For easier comprehension one can pretend that $CH$ is linearized contact homology, with the understanding that to give rigorous definitions and proofs without transversality difficulties we use positive $S^1$-equivariant symplectic homology instead.
} by $CH^L(X,\lambda)$.
The precise definition of $CH^L(X,\lambda)$ that we are using\footnote{A notational difference between here and \cite{gh} is that $CH^L(X,\lambda)$ here corresponds to $CH^{e^L}(X,\lambda)$ in \cite{gh}. The idea is that \cite{gh} considers Reeb orbits with action less than or equal to $L$, while we consider Reeb orbits with action less than or equal to $e^L$. The reason for the convention here is that we want scaling of the contact form to correspond to a shift morphism \eqref{eqn:shiftmorphism} on the persistence module coming from equivariant symplectic homology.} is given in \cite{gh}, and further properties are established in \cite{gu}. (Positive) $S^1$-equivariant symplectic homology was originally defined by Viterbo \cite{viterbo}, extending the definition of nonequivariant symplectic homology by Cieliebak, Floer, and Hofer \cite{fh,cfh}. An alternate version of (positive) $S^1$-equivariant symplectic homology was defined using family Floer homology by Bourgeois-Oancea \cite[\S2.2]{bo}, following a suggestion of Seidel \cite{seidel}, and further developed by Gutt \cite{gutt}. The filtered version used here is based on this family Floer approach.

\subsection{Basic properties}

We now review the properties of $CH^L$ that we will need, which are collected in Proposition~\ref{prop:CHL} below.

\begin{definition}
\label{def:good}
Let $(Y,\lambda)$ be a nondegenerate contact manifold. A Reeb orbit $\gamma$ is {\bf bad\/} if it is the $d$-fold cover of a simple Reeb orbit $\overline{\gamma}$ where $d$ is even and the Lefschetz signs $(-1)^{\epsilon(\gamma)}\neq(-1)^{\epsilon(\overline{\gamma})}$. A Reeb orbit is {\bf good\/} if it is not bad.
\end{definition}

\begin{remark}
\label{rem:czl}
The original definition of good and bad Reeb orbits in \cite{egh} is given in terms of the mod 2 Conley-Zehnder index, which we denote here by $\op{CZ}$, instead of the Lefschetz sign. This is equivalent, because we have
\[
\op{CZ}(\gamma)\equiv \epsilon(\gamma)+n-1\mod 2,
\]
where $\dim(Y)=2n-1$.
\end{remark}

\begin{definition}
Let $(X,\lambda)$ and $(X',\lambda')$ be Liouville domains of the same dimension. An {\bf exact symplectic embedding\/} $(X,\lambda)\hookrightarrow (X',\lambda')$ is a symplectic embedding $\varphi:(X,d\lambda)\hookrightarrow (X',d\lambda')$ such that
\[
\left[\varphi^*\lambda'-\lambda\right] = 0 \in H^1(X;\R).
\]
\end{definition}

\begin{example}
\label{ex:inclusion}
If $(X',\lambda')$ is a Liouville domain, and if $X\subset X'$ is a subset such that $(X,\lambda'|_X)$ is a Liouville domain of the same dimension, then the inclusion map $(X,\lambda'|_X)\hookrightarrow (X',\lambda')$ is an exact symplectic embedding.
\end{example}

\begin{example}
\label{ex:scaling}
Let $(X,\lambda)$ be a Liouville domain. There is a unique vector field $V$ on $X$ such that $\imath_Vd\lambda = \lambda$. This vector field is transverse to $\partial X$ and points outward from $X$. Given $\delta\ge 0$, let $\Phi_X^\delta:X\to X$ denote the time $-\delta$ flow of $V$. Then $(\Phi_X^\delta)^*\lambda = e^{-\delta}\lambda$. Thus we have an exact symplectic embedding
\[
\Phi_X^\delta: (X,e^{-\delta}\lambda) \hookrightarrow (X,\lambda).
\]
\end{example}

\begin{remark}
It follows from the definition that if $\varphi:(X,\lambda)\hookrightarrow (X',\lambda')$ and $\varphi':(X',\lambda')\hookrightarrow (X'',\lambda'')$ are exact symplectic embeddings, then the composition
\[
\varphi'\circ\varphi: (X,\lambda)\hookrightarrow (X'',\lambda'')
\]
is also an exact symplectic embedding.
\end{remark}

\begin{proposition}
\label{prop:CHL}
Let $(X,\lambda)$ be a nondegenerate Liouville domain and let $L\in\R$. Then the filtered positive $S^1$-equivariant symplectic homology $CH^L(X,\lambda)$ is a $\Z/2$-graded\footnote{In some cases, for example when $X$ is a smooth star-shaped domain in $\R^{2n}$, the $\Z/2$-grading has a refinement to a $\Z$-grading determined by the Conley-Zehnder index. However we will not need this.} $\Q$-module with the following properties:
\begin{description}
\item{(Persistence)} If $L_1\le L_2$, then there is a canonical ``persistence morphism''
\begin{equation}
\label{eqn:persistence}
CH^{L_2,L_1}(X,\lambda): CH^{L_1}(X,\lambda) \longrightarrow CH^{L_2}(X,\lambda)
\end{equation}
such that:
\begin{itemize}
\item
$CH^{L,L}(X,\lambda) = \op{id}_{CH^L(X,\lambda)}$.
\item
If $L_1\le L_2 \le L_3$, then
\[
CH^{L_3,L_1}(X,\lambda) = CH^{L_3,L_2}(X,\lambda) \circ CH^{L_2,L_1}(X,\lambda).
\]
\end{itemize}
\item{(Reeb Orbits)}
There exists\footnote{The differential on the chain complex is not uniquely determined by $(X,\lambda)$ and $L$ and depends on some additional choices.} an $\R$-filtered chain complex $(C,\partial,\mathcal{B},\epsilon,\mathcal{F})$ over $\Q$ as in Definition~\ref{def:fcc} with the following properties:
\begin{itemize}
\item $\mathcal{B}$ is the set of good Reeb orbits in $(\partial X,\lambda)$.
\item If $\gamma$ is a good Reeb orbit, then $(-1)^{\epsilon(\gamma)}$ is the Lefschetz sign of $\gamma$ as in \S\ref{sec:zetanondeg}, and $\mathcal{F}(\gamma) = \log\mathcal{A}(\gamma)$, where $\mathcal{A}$ is the symplectic action as in \S\ref{sec:zetanondeg}.
\item 
In the notation of \S\ref{sec:permod}, there is an isomorphism
\begin{equation}
\label{eqn:awkward}
CH^L(X,\lambda) \simeq H_*^{\mathcal{F}\le L}(C,\partial).
\end{equation}
\item 
If $L_1\le L_2$, then the image of $CH^{L_2,L_1}(X,\lambda)$ is isomorphic under \eqref{eqn:awkward} (with $L=L_2$) to the image of the inclusion-induced map \eqref{eqn:iim}.
\end{itemize}
\item{(Transfer Morphisms)}
If $(X',\lambda')$ is another nondegenerate Liouville domain, and if $\varphi:(X,\lambda) \hookrightarrow (\op{int}(X'),\lambda')$ is an exact symplectic embedding\footnote{Instead of an exact symplectic embedding, one can start with a slight generalization called a ``generalized Liouville embedding'' in \cite[Def.\ 1.23]{gh}. However we will not need this.}, then there is a well-defined {\bf transfer morphism\/}
\begin{equation}
\label{eqn:transfer}
CH^L(\varphi): CH^L(X',\lambda') \longrightarrow CH^L(X,\lambda)
\end{equation}
with the following properties:
\begin{itemize}
\item
If $L_1\le L_2$, then we have a commutative diagram
\begin{equation}
\label{eqn:transferfiltration}
\begin{CD}
CH^{L_1}(X',\lambda') @>{CH^{L_1}(\varphi)}>> CH^{L_1}(X,\lambda)\\
@V{CH^{L_2,L_1}(X',\lambda')}VV  @VV{CH^{L_2,L_1}(X,\lambda)}V\\
CH^{L_2}(X',\lambda') @>CH^{L_2}(\varphi)>> CH^{L_2}(X,\lambda).
\end{CD}
\end{equation}
\item
If $(X'',\lambda'')$ is a third nondegenerate Liouville domain, and if $\varphi':(X',\lambda')\hookrightarrow (\op{int}(X''),\lambda'')$ is another exact symplectic embedding, then we have
\begin{equation}
\label{eqn:transferfunctor}
CH^L(\varphi'\circ\varphi) = CH^L(\varphi)\circ CH^L(\varphi') : CH^L(X'',\lambda'') \longrightarrow CH^L(X,\lambda).
\end{equation}
\end{itemize}
\item{(Scaling)}
If $\delta\in\R$, then there is a canonical {\bf scaling isomorphism}
\[
s_\delta : CH^L(X,\lambda) \stackrel{\simeq}{\longrightarrow} CH^{L-\delta}(X,e^{-\delta}\lambda).
\]
with the following properties:
\begin{itemize}
\item
The scaling isomorphism commutes with the persistence maps \eqref{eqn:persistence}. That is, if $L_1\le L_2$, then we have a commutative diagram
\begin{equation}
\label{eqn:scaling1}
\begin{CD}
CH^{L_1}(X,\lambda) @>{s_\delta}>{\simeq}> CH^{L_1-\delta}(X,e^{-\delta}\lambda)\\
@V{CH^{L_2,L_1}(X,\lambda)}VV @VV{CH^{L_2-\delta,L_1-\delta}(X,e^{-\delta}\lambda)}V\\
CH^{L_2}(X,\lambda) @>{s_\delta}>{\simeq}> CH^{L_2-\delta}(X,e^{-\delta}\lambda).
\end{CD}
\end{equation}
\item
If $\delta> 0$, and if $\Phi_X^\delta:(X,e^{-\delta}\lambda)\to (X,\lambda)$ is the exact symplectic embedding in Example~\ref{ex:scaling}, then we have
\begin{equation}
\label{eqn:scaling3}
CH^L(\Phi_X^\delta) = CH^{L,L-\delta}(X,e^{-\delta}\lambda) \circ s_\delta : CH^L(X,\lambda) \longrightarrow CH^L(X,e^{-\delta}\lambda).
\end{equation}
\end{itemize}
\end{description}
\end{proposition}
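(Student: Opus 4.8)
The plan is to assemble the four properties from the construction of filtered positive $S^1$-equivariant symplectic homology in \cite{gh}, with the refinements of \cite{gu}, using the family Floer model of \cite{bo, gutt} going back to \cite{viterbo, seidel}. Recall that $CH^L(X,\lambda)$ is defined by choosing a cofinal family of admissible Hamiltonians on the symplectic completion $\widehat{X}$ (linear of increasing slope at the cylindrical end) together with $S^1$-equivariant Floer data, forming the family Floer complex, passing to the positive (non-constant) part, and taking the subquotient generated by $1$-periodic orbits of symplectic action at most $e^L$ --- the logarithm entering through the action reparametrization of the footnote. Its homology is $CH^L(X,\lambda)$, canonically independent of the auxiliary choices.

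For \textbf{(Persistence)}, with the Floer data fixed the subcomplexes cut out by the cutoffs $e^L\le e^{L'}$ are nested, $CH^{L',L}(X,\lambda)$ is the inclusion-induced map, and the identity and composition relations are immediate; independence of the choices is the same argument that makes $CH^L(X,\lambda)$ well defined, and all of this is in \cite{gh}. For \textbf{(Reeb Orbits)}, I would pass to the Morse-Bott/cascade description, in which the generators of the positive equivariant complex are the good Reeb orbits of $(\partial X,\lambda|_{\partial X})$ --- bad orbits contribute zero, the equivariant incarnation of the good/bad dichotomy of \cite{egh}, compatible with our conventions by Remark~\ref{rem:czl} --- with mod $2$ grading the Conley-Zehnder index, which by Remark~\ref{rem:czl} differs from $\epsilon$ by the constant $n-1$ and may therefore be taken to be $\epsilon$, and with filtration $\mathcal F(\gamma)=\log\mathcal A(\gamma)$. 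Taking the direct limit over the cofinal family of Hamiltonians (equivalently, over increasing action windows) produces a single $\R$-filtered chain complex $(C,\partial,\mathcal B,\epsilon,\mathcal F)$ with $\mathcal B$ the set of all good Reeb orbits; nondegeneracy of $\lambda|_{\partial X}$ and compactness of $\partial X$ give the finiteness clause of Definition~\ref{def:fcc}, and one reads off the isomorphism $CH^L(X,\lambda)\simeq H_*^{\mathcal F\le L}(C,\partial)$ with the image of $CH^{L',L}(X,\lambda)$ matching that of \eqref{eqn:iim}.

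For \textbf{(Transfer Morphisms)}, the morphism \eqref{eqn:transfer}, the square \eqref{eqn:transferfiltration}, and the functoriality \eqref{eqn:transferfunctor} are constructed in \cite{gh} for generalized Liouville embeddings, of which an exact symplectic embedding $\varphi:(X,\lambda)\hookrightarrow(\op{int}(X'),\lambda')$ is a special case --- the image lying in the interior supplies the room needed to interpolate slopes --- so I would simply quote those results. For \textbf{(Scaling)}, rescaling $\lambda$ to $e^{-\delta}\lambda$ multiplies $d\lambda$ and all Reeb periods by $e^{-\delta}$, hence $\log$-actions by $-\delta$; there is a canonical diffeomorphism $\widehat{(X,\lambda)}\simeq\widehat{(X,e^{-\delta}\lambda)}$ intertwining the Liouville vector fields, and transporting the Floer data through it defines $s_\delta$, with \eqref{eqn:scaling1} automatic. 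For \eqref{eqn:scaling3} I would observe that under this identification $\Phi_X^\delta$ becomes the time $-\delta$ Liouville flow on the completion, whose transfer (continuation) map is on the cascade model the identity on Reeb orbits but moves the action cutoff from $e^L$ down to $e^{L-\delta}$ and then back up, so that $CH^L(\Phi_X^\delta)$ factors as $s_\delta$ followed by the persistence map $CH^{L,L-\delta}(X,e^{-\delta}\lambda)$.

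I expect the main obstacle to be the bookkeeping in \textbf{(Reeb Orbits)}: verifying that the direct limit over the cofinal family genuinely yields one $\R$-filtered chain complex whose filtered homology, together with its inclusion-induced maps, reproduces $CH^L(X,\lambda)$ and its persistence morphisms, which requires combining the cascade description of the equivariant complex, the vanishing of bad orbits, and the compatibility of continuation maps with the action filtration. The identity \eqref{eqn:scaling3} is the other place where a short geometric argument beyond citing \cite{gh, gu} is needed, to match the transfer data of the Liouville flow with the scaling identification.
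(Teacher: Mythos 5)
Your overall strategy --- taking $CH^L$ as defined in \cite{gh} and assembling the Persistence, Transfer and Scaling properties by quoting \cite{gh} and \cite{gutt} (with the extension of functoriality to the filtered setting and to exact symplectic embeddings via the arguments in \cite{gh}) --- is the same as the paper's, and those parts are in order; the identity \eqref{eqn:scaling3} does follow directly from the definition of the scaling isomorphism in \cite[\S6.6]{gh}, essentially as you describe.

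The soft spot is your treatment of the Reeb Orbits property. You propose to pass to a Morse--Bott/cascade model whose generators are exactly the good Reeb orbits, take a direct limit over a cofinal family of Hamiltonians, and then ``read off'' the isomorphism \eqref{eqn:awkward} together with the compatibility that the filtered homology of this one complex, \emph{with its inclusion-induced maps}, reproduces $CH^L(X,\lambda)$ and its persistence morphisms. That last compatibility is exactly what the available technology does not give, and the proposition is deliberately phrased to avoid claiming it: in the family Floer model underlying \cite{gh}, the complex computing $CH^L$ has infinitely many generators for every Reeb orbit, good or bad, of action at most $e^L$, and the reduction to one generator per good orbit is a spectral sequence argument, namely \cite[Lem.\ 2.1]{gu} (whose hypothesis $c_1(TX)|_{\pi_2(X)}=0$, imposed there only to get a $\Z$-grading, is replaced here by the $\Z/2$-grading). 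The resulting isomorphism \eqref{eqn:awkward} is \emph{not} asserted to intertwine $CH^{L',L}(X,\lambda)$ with the inclusion-induced maps \eqref{eqn:iim} --- see Remark~\ref{rem:awkward} --- and only the weaker statement that the two images are isomorphic is claimed, which is what \cite[Lem.\ 2.1]{gu} supplies and what the rest of the paper (e.g. Lemma~\ref{lem:intervaliso} and the Reeb Orbits part of Proposition~\ref{prop:CH}) is written to use. You do flag this as the main obstacle, but as written your plan relies on the stronger naturality: either you must carry out a genuinely new construction of a filtered cascade model with natural persistence maps (not in the cited literature, and with its own Morse--Bott transversality issues in the equivariant setting), or you should weaken the claim to the image statement and obtain it from \cite[Lem.\ 2.1]{gu}, as the paper does.
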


\begin{proof}

{\em Definition of $CH^L(X,\lambda)$.}
We define $CH^L(X,\lambda)$ to be the $\Q$-module that is called $CH^{e^L}(X,\lambda)$ in \cite{gh}. 

In the special case when $X$ is a star-shaped domain in $\R^{2n}$, this module has a $\Z$-grading explained in \cite[\S6.7]{gh} in terms of the Conley-Zehnder index. For a general nondegenerate Liouville domain, the Conley-Zehnder index is still well defined mod 2, and so the convention in \cite[\S6.7]{gh} extends to define a $\Z/2$-grading on $CH^L(X,\lambda)$. In light of Remark~\ref{rem:czl}, we further shift this grading by $n-1\mod 2$, where $\dim(X)=2n$.

{\em Persistence.} This is part of the ``Action filtration'' property in \cite[Prop.\ 3.1]{gh}.

{\em Reeb Orbits.} This follows from \cite[Lem.\ 2.1]{gu} in the case when $c_1(TX)|_{\pi_2(X)}=0$. The latter hypothesis was made in \cite{gu} in order to define a $\Z$-grading on $CH^L(X,\lambda)$; without this hypothesis, the proof carries over using the $\Z/2$-grading above.

{\em Transfer morphisms.} The construction of the transfer morphism \eqref{eqn:transfer} and the proof of the commutativity of the diagram \eqref{eqn:transferfiltration} are given in the proof of the ``Action'' property in \cite[Prop.\ 3.3]{gh}. The functoriality \eqref{eqn:transferfunctor} for transfer morphisms on unfiltered positive $S^1$-equivariant symplectic homology for Liouville embeddings is proved in \cite[Thm.\ 4.12]{gutt}. This extends to the filtered positive $S^1$-equivariant homology $CH^L$ by the arguments in \cite[\S8.1]{gh}, and to exact symplectic embeddings by the construction in \cite[\S7.3]{gh}.

{\em Scaling.} The construction of the scaling isomorphism and the proof of the first bullet point \eqref{eqn:scaling1} are given in the proof of the ``Scaling'' property in \cite[Prop.\ 3.1]{gh}. The remaining bullet point \eqref{eqn:scaling3} follows directly from the definition of the scaling isomorphism in \cite[\S6.6]{gh}.
\end{proof}

\begin{remark}
\label{rem:awkward}
The Reeb orbits property does not assert that the isomorphism \eqref{eqn:awkward} commutes with the maps $CH^{L_2,L_1}(X,\lambda)$ and \eqref{eqn:iim}, although one might expect this from the linearized contact homology perspective. The cause of this awkwardness is that  $CH^L(X,\lambda)$ is more naturally described as the homology of a chain complex with infinitely many generators for every Reeb orbit (good or bad) with symplectic action $\le e^L$; see \cite[Rem.\ 5.15]{gh}. The reduction to a chain complex with one generator for every good Reeb orbit with action $\le e^L$ in the Reeb Orbits property of Proposition~\ref{prop:CHL} uses a spectral sequence argument.
\end{remark}


\subsection{Equivariant symplectic homology as a persistence module}

We now recast Proposition~\ref{prop:CHL} using the language of persistence modules, as Proposition~\ref{prop:CH} below. Some related discussion in the context of nonequivariant symplectic homology is given in \cite[\S3]{sz} and \cite[\S9]{polterovich}.

\begin{definition}
Let $(Y,\lambda)$ be a contact manifold. Let $\mathcal{P}(Y,\lambda)$ denote the set of Reeb orbits. Define the {\bf action spectrum\/}
\[
\op{Spec}(Y,\lambda) = \{\mathcal{A}(\gamma) \mid \gamma\in\mathcal{P}(Y,\lambda)\}.
\]
\end{definition}

\begin{lemma}
\label{lem:intervaliso}
Let $(X,\lambda)$ be a nondegenerate Liouville domain, let $L_1<L_2$, and suppose that
\begin{equation}
\label{eqn:nospectrumininterval}
(e^{L_1},e^{L_2}] \cap \op{Spec}(\partial X,\lambda) = \emptyset.
\end{equation}
Then the map
\[
CH^{L_2,L_1}(X,\lambda) : CH^{L_1}(X,\lambda) \longrightarrow CH^{L_2}(X,\lambda)
\]
is an isomorphism.
\end{lemma}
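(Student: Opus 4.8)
The plan is to use the Reeb Orbits property from Proposition~\ref{prop:CHL}, which identifies $CH^L(X,\lambda)$ with the filtered homology $H_*^{\mathcal{F}\le L}(C,\partial)$ of an $\R$-filtered chain complex whose basis $\mathcal{B}$ is the set of good Reeb orbits, with filtration $\mathcal{F}(\gamma) = \log\mathcal{A}(\gamma)$. The key observation is that the hypothesis \eqref{eqn:nospectrumininterval}, namely $(e^L,e^{L'}]\cap\op{Spec}(\partial X,\lambda)=\emptyset$, says precisely that no good (indeed no) Reeb orbit $\gamma$ satisfies $\mathcal{A}(\gamma)\in(e^L,e^{L'}]$, i.e.\ $\log\mathcal{A}(\gamma)\in(L,L']$. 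Therefore the subcomplexes $C^{\mathcal{F}\le L}$ and $C^{\mathcal{F}\le L'}$ have the same basis elements, hence are literally equal as chain complexes, so the inclusion-induced map $\imath^{L',L}$ of \eqref{eqn:iim} is the identity, in particular an isomorphism.

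The remaining point is to transfer this conclusion about $\imath^{L',L}$ back to a conclusion about $CH^{L',L}(X,\lambda)$. Here one must be slightly careful because, as noted in Remark~\ref{rem:awkward}, the isomorphism \eqref{eqn:awkward} is not asserted to intertwine $CH^{L',L}(X,\lambda)$ with $\imath^{L',L}$. However, the Reeb Orbits property does assert that the \emph{image} of $CH^{L',L}(X,\lambda)$ is isomorphic to the image of $\imath^{L',L}$. Since $\imath^{L',L}$ is the identity on $H_*^{\mathcal{F}\le L}(C,\partial)$, its image is all of $H_*^{\mathcal{F}\le L}(C,\partial)$, which via \eqref{eqn:awkward} has the same dimension as $CH^L(X,\lambda)$. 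Thus the image of $CH^{L',L}(X,\lambda)$ has dimension equal to $\dim CH^L(X,\lambda)$, so $CH^{L',L}(X,\lambda)$ is injective; and it likewise has dimension equal to $\dim CH^{L'}(X,\lambda)$ (since $H_*^{\mathcal{F}\le L}(C,\partial) = H_*^{\mathcal{F}\le L'}(C,\partial)$ and both are identified with $CH^{L'}(X,\lambda)$ via \eqref{eqn:awkward}), so $CH^{L',L}(X,\lambda)$ is surjective. Being a map of finite-dimensional $\Q$-vector spaces that is both injective and surjective, it is an isomorphism.

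I expect the only real subtlety—and hence the main thing to get right—to be exactly this last bookkeeping step: one must argue purely via dimensions (equivalently, via the statement about images) rather than naively asserting that \eqref{eqn:awkward} is natural with respect to the persistence maps, since the latter is explicitly disclaimed in Remark~\ref{rem:awkward}. Everything else is routine: the equality of subcomplexes is immediate from unwinding $\mathcal{F}(\gamma)=\log\mathcal{A}(\gamma)$ and the definition of $\op{Spec}$, and the finite-dimensionality needed to conclude "injective $+$ surjective $\Rightarrow$ isomorphism" is part of Proposition~\ref{prop:CHL}.
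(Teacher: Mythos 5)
Your proof is correct and follows essentially the same route as the paper: both identify the subcomplexes $C^{\mathcal{F}\le L}$ and $C^{\mathcal{F}\le L'}$ using the hypothesis on $\op{Spec}(\partial X,\lambda)$, invoke the last bullet of the Reeb Orbits property to compare images (precisely to sidestep the non-naturality flagged in Remark~\ref{rem:awkward}), and finish by a dimension count over $\Q$. The only cosmetic difference is that the paper concludes directly from ``surjective between spaces of equal finite dimension,'' whereas you separately deduce injectivity and surjectivity; this is harmless.
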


\begin{proof}
This follows from the ``Reeb orbits'' property in \cite[Prop.\ 3.1]{gh}. We can also deduce this from Proposition~\ref{prop:CHL} as follows.

Let $(C,\partial,\mathcal{B},\epsilon,\mathcal{F})$ be an $\R$-filtered chain complex given by the Reeb Orbits property in Proposition~\ref{prop:CHL}. Consider the diagram
\[
\begin{CD}
H_*^{\mathcal{F}\le L_1}(C,\partial) @>{\imath^{L_2,L_1}}>> H_*^{\mathcal{F}\le L_2}(C,\partial) \\
@V{\simeq}VV  @V{\simeq}VV\\
CH^{L_1}(X,\lambda) @>{CH^{L_2,L_1}(X,\lambda)}>> CH^{L_2}(X,\lambda).
\end{CD}
\]
Here the vertical isomorphisms are given by \eqref{eqn:awkward}. As noted in Remark~\ref{rem:awkward}, this diagram might not commute. However we do know by the hypothesis \eqref{eqn:nospectrumininterval} that the top horizontal arrow is an isomorphism. It then follows from the last bullet point in the Reeb Orbits property in Proposition~\ref{prop:CHL} that the bottom horizontal arrow is surjective. Since this is a linear map between finite dimensional vector spaces of the same dimension, it must then be an isomorphism.
\end{proof}

\begin{definition}
\label{def:plgood}
Suppose that $(Y,\lambda)$ is a nondegenerate contact manifold. Let $\mathcal{P}_{\op{good}}(Y,\lambda)$ denote the set of good Reeb orbits. If $L\in\R$, define
\begin{equation}
\label{eqn:Pgood}
\#\mathcal{P}_{\op{good}}^L(Y,\lambda) = \sum_{\gamma\in\mathcal{P}_{\op{good}}(Y,\lambda) : \mathcal{A}(\gamma) = e^L} (-1)^{\epsilon(\gamma)} \in \Z.
\end{equation}
\end{definition}

\begin{proposition}
\label{prop:CH}
Let $(X,\lambda)$ be a nondegenerate Liouville domain. Then:
\begin{description}
\item{(Persistence)}
The $\Z/2$-graded $\Q$-modules $CH^L(X,\lambda)$ and the maps $CH^{L_2,L_1}(X,\lambda)$ in Proposition~\ref{prop:CHL} constitute a persistence module $CH(X,\lambda)$ over $\Q$ as in Definition~\ref{def:pm}.
\item{(Reeb Orbits)}
If $L\in\R$, then the Euler characteristic jump in Definition~\ref{def:DeltaL} is given by
\begin{equation}
\label{eqn:ecj}
\Delta_L(CH(X,\lambda)) = \#\mathcal{P}_{\op{good}}^{L}(\partial X,\lambda).
\end{equation}
\item{(Transfer Morphisms)}
 If $(X',\lambda')$ is another nondegenerate Liouville domain and if $\varphi:(X,\lambda) \hookrightarrow (\op{int}(X'),\lambda')$ is an exact symplectic embedding, then the transfer morphisms $CH^L(\varphi)$ in Proposition~\ref{prop:CHL} constitute a morphism of persistence modules
\[
CH(\varphi) : CH(X',\lambda') \longrightarrow CH(X,\lambda).
\]
If $(X'',\lambda'')$ is a third nondegenerate Liouville domain, and if $\varphi':(X',\lambda')\hookrightarrow (\op{int}(X''),\lambda'')$ is another exact symplectic embedding, then we have
\[
CH(\varphi'\circ\varphi) = CH(\varphi)\circ CH(\varphi') : CH(X'',\lambda'') \longrightarrow CH(X,\lambda).
\]
\item{(Scaling)}
If $\delta\in\R$, then in the notation of Definition~\ref{def:shift}, there is a canonical isomorphism of persistence modules
\[
s_\delta : CH(X,\lambda) \stackrel{\simeq}{\longrightarrow} CH(X,e^{-\delta}\lambda)[-\delta].
\]
If $\delta> 0$, and if $\Phi_X^\delta:(X,e^{-\delta}\lambda)\hookrightarrow (X,\lambda)$ is the exact symplectic embedding in Example~\ref{ex:scaling}, then we have
\[
CH(\Phi_X^\delta) = \eta_\delta \circ s_\delta : CH(X,\lambda) \longrightarrow CH(X,e^{-\delta}\lambda),
\]
where $\eta_\delta$ is the shift morphism \eqref{eqn:shiftmorphism}.
\end{description}
\end{proposition}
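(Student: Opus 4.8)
The plan is to translate each clause of Proposition~\ref{prop:CHL} into the persistence-module language of \S\ref{sec:permod}; the only substantively new points are the verification of the axioms in Definition~\ref{def:pm} and the Euler characteristic computation~\eqref{eqn:ecj}, everything else being a restatement.

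\emph{Persistence.} First I would observe that each $CH^L(X,\lambda)$ is a finite-dimensional $\Z/2$-graded $\Q$-vector space: by~\eqref{eqn:awkward} it is the homology of $C^{\mathcal{F}\le L}$, which has a finite basis, namely the good Reeb orbits of action at most $e^L$. The identities $CH^{L,L}(X,\lambda)=\op{id}$ and $CH^{L'',L}(X,\lambda)=CH^{L'',L'}(X,\lambda)\circ CH^{L',L}(X,\lambda)$ are precisely the Persistence property of Proposition~\ref{prop:CHL}, so we obtain a functor from $(\R,\le)$ to $\Z/2$-graded $\Q$-vector spaces. Boundedness from below holds because for $L\ll 0$ there are no Reeb orbits of action $\le e^L$, so $C^{\mathcal{F}\le L}=0$ and hence $CH^L(X,\lambda)=0$. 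For properness I would take $A=\{\log\mathcal{A}(\gamma)\mid\gamma\in\mathcal{P}(\partial X,\lambda)\}$, which is discrete because nondegeneracy of $\lambda|_{\partial X}$ together with compactness of $\partial X$ leave only finitely many Reeb orbits below any action bound; if $(L,L']\subset\R\setminus A$, then $(e^L,e^{L'}]\cap\op{Spec}(\partial X,\lambda)=\emptyset$, and Lemma~\ref{lem:intervaliso} shows that $CH^{L',L}(X,\lambda)$ is an isomorphism.

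\emph{Reeb Orbits.} Fix an $\R$-filtered chain complex $(C,\partial,\mathcal{B},\epsilon,\mathcal{F})$ as in the Reeb Orbits property of Proposition~\ref{prop:CHL}. Because passing to homology preserves the graded Euler characteristic, \eqref{eqn:awkward} gives that the graded Euler characteristic of $CH^L(X,\lambda)$ equals that of $C^{\mathcal{F}\le L}$, i.e.\ $\sum(-1)^{\epsilon(\gamma)}$ over good Reeb orbits $\gamma$ with $\mathcal{A}(\gamma)\le e^L$. Since $\op{Spec}(\partial X,\lambda)$ is discrete, for $\delta>0$ sufficiently small the corresponding sum for $CH^{L-\delta}(X,\lambda)$ instead ranges over good $\gamma$ with $\mathcal{A}(\gamma)<e^L$, and this computes the graded Euler characteristic of $CH^{L,-}(X,\lambda)$. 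Subtracting yields $\Delta_L(CH(X,\lambda))=\sum_{\gamma\in\mathcal{P}_{\op{good}}(\partial X,\lambda)\,:\,\mathcal{A}(\gamma)=e^L}(-1)^{\epsilon(\gamma)}=\#\mathcal{P}_{\op{good}}^{e^L}(\partial X,\lambda)$. I would stress that this argument only uses the \emph{individual} isomorphisms~\eqref{eqn:awkward} to extract dimensions, so the non-naturality flagged in Remark~\ref{rem:awkward} causes no difficulty here.

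\emph{Transfer Morphisms and Scaling.} By Definition~\ref{def:morphism}, a morphism of persistence modules is a grading-preserving family of maps commuting with the persistence maps; for the transfer morphisms this commutativity is exactly the square~\eqref{eqn:transferfiltration}, grading preservation being part of the construction, so $CH(\varphi)$ is a morphism, and $CH(\varphi'\circ\varphi)=CH(\varphi)\circ CH(\varphi')$ follows from~\eqref{eqn:transferfunctor} applied at each level $L$. For scaling, I would unwind the shift conventions of Definition~\ref{def:shift}: $\bigl(CH(X,e^{-\delta}\lambda)[-\delta]\bigr)^L=CH^{L-\delta}(X,e^{-\delta}\lambda)$ with persistence maps $CH^{L'-\delta,L-\delta}(X,e^{-\delta}\lambda)$, so the level-$L$ scaling isomorphisms $s_\delta\colon CH^L(X,\lambda)\to CH^{L-\delta}(X,e^{-\delta}\lambda)$ of Proposition~\ref{prop:CHL} assemble into an isomorphism of persistence modules $s_\delta\colon CH(X,\lambda)\to CH(X,e^{-\delta}\lambda)[-\delta]$, with the required compatibility being~\eqref{eqn:scaling1}. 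Since $\eta_\delta^L$ on $CH(X,e^{-\delta}\lambda)[-\delta]$ unwinds to $CH^{L,L-\delta}(X,e^{-\delta}\lambda)$, we get $(\eta_\delta\circ s_\delta)^L=CH^{L,L-\delta}(X,e^{-\delta}\lambda)\circ s_\delta=CH^L(\Phi_X^\delta)$ by~\eqref{eqn:scaling3}, whence $CH(\Phi_X^\delta)=\eta_\delta\circ s_\delta$. I do not expect any step to be genuinely hard; the only points requiring care are keeping the shift indices straight in the Scaling part and confirming, in the Euler characteristic computation, that one never secretly appeals to naturality of~\eqref{eqn:awkward}.
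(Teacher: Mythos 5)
Your proposal is correct and follows essentially the same route as the paper: the Persistence, Transfer, and Scaling parts are the same translation of Proposition~\ref{prop:CHL} into the language of \S\ref{sec:permod}, with properness supplied by Lemma~\ref{lem:intervaliso} and the same index bookkeeping for the shift. The only (harmless) variation is in the Reeb Orbits step, where you compute $\Delta_L$ by invoking the equality of the graded Euler characteristics of $C^{\mathcal{F}\le L}$ and its homology, whereas the paper packages the same dimension count via the six-term exact sequence of $0\to C^{\mathcal{F}\le L-\delta}\to C^{\mathcal{F}\le L}\to V\to 0$; both arguments, as you correctly emphasize, use the isomorphism \eqref{eqn:awkward} only to extract dimensions and so are unaffected by the non-naturality noted in Remark~\ref{rem:awkward}.
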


\begin{proof}
{\em Persistence.}
Since $(\partial X,\lambda)$ is compact and nondegenerate, for each $L\in\R$ there are only finitely many Reeb orbits $\gamma$ with ${\mathcal{A}(\gamma)} \le e^L$. Together with the Reeb Orbits property in Proposition~\ref{prop:CHL}, this implies the finite dimensionality and boundedness from below in Definition~\ref{def:pm}. It follows from Lemma~\ref{lem:intervaliso} that the properness condition in Definition~\ref{def:pm} holds with $A=\{L\mid e^L\in \op{Spec}(\partial X,\lambda)\}$. The remaining conditions in Definition~\ref{def:pm} follow from the Persistence property in Proposition~\ref{prop:CHL}.

{\em Reeb Orbits.}
If $e^L \notin\op{Spec}(\partial X,\lambda)$, then it follows from equation \eqref{eqn:DeltaL} and the isomorphism \eqref{eqn:awkward} that $\Delta_L(CH(X,\lambda))=0$, and it follows from equation \eqref{eqn:Pgood} that $\#\mathcal{P}_{\op{good}}^L(\partial X,\lambda) = 0$.

It remains to prove equation \eqref{eqn:ecj} when $e^L\in\op{Spec}(\partial X,\lambda)$. Choose $\delta>0$ sufficiently small so that there are no Reeb orbits with symplectic action in the interval $(e^{L-\delta},e^L)$. Let $V=V_0\oplus V_1$ be the $\Z/2$-graded vector space over $\Q$ with basis given by the good Reeb orbits $\gamma$ with action $\mathcal{A}(\gamma)=e^L$, where the $\Z/2$ grading of a Reeb orbit $\gamma$ is given by $\epsilon(\gamma)$, where $(-1)^{\epsilon(\gamma)}$ is the Lefschetz sign. By Definition~\ref{def:plgood} we have
\[
\#\mathcal{P}^{L}_{\op{good}}(\partial X,\lambda) = \dim(V_0) - \dim(V_1).
\]
Thus we need to show that
\begin{gather}
\nonumber
\dim(CH^{L}_0(X,\lambda)) - \dim(CH^{L}_1(X,\lambda)) - \dim(CH^{L-\delta}_0(X,\lambda)) + \dim(CH^{L-\delta}_1(X,\lambda))\\
\label{eqn:sixterm}
= \dim(V_0) - \dim(V_1).
\end{gather}

Let $(C,\partial,\mathcal{B},\epsilon,\mathcal{F})$ be an $\R$-filtered chain complex provided by the Reeb Orbits property in Proposition~\ref{prop:CHL}. We then have a short exact sequence of $\Z/2$-graded chain complexes
\[
0 \longrightarrow C^{\mathcal{F}\le L-\delta} \longrightarrow C^{\mathcal{F}\le L} \longrightarrow V \longrightarrow 0
\]
where the differential on $V$ is zero. This gives a six-term exact sequence on homology, which by the isomorphism \eqref{eqn:awkward} can be identified\footnote{The map $CH^{L-\delta}(X,\lambda)\to CH^{L}(X,\lambda)$ in this exact sequence corresponds to the map $\imath^{L,L-\delta}:H_*^{\mathcal{F}\le L-\delta}(C,\partial) \to H_*^{\mathcal{F}\le L}(C,\partial)$. Note that this map is not necessarily identified with the map $CH^{L,L-\delta}(X,\lambda)$; see Remark~\ref{rem:awkward}.
}
with
\[
\cdots\to
CH_0^{L-\delta}(X,\lambda) \to CH_0^{L}(X,\lambda) \to V_0
\to CH_1^{L-\delta}(X,\lambda) \to CH_1^{L}(X,\lambda) \to V_1 \to \cdots
\]
The alternating sum of the dimensions of the six terms is zero, and this gives equation \eqref{eqn:sixterm}.

{\em Transfer Morphisms and Scaling:} These properties are rewrites of the corresponding properties in Proposition~\ref{prop:CHL}, using the definitions in \S\ref{sec:permod}.
\end{proof}

\begin{remark}
In the language of Proposition~\ref{prop:CH}, Remark~\ref{rem:awkward} can be restated as follows: The isomorphisms \eqref{eqn:awkward} do not necessarily constitute an isomorphism of persistence modules as in Definition~\ref{def:morphism}.
\end{remark}

\begin{remark}
Proposition~\ref{prop:CH} and Definition~\ref{def:zetapm} allow one to associate a zeta function to any nondegenerate Liouville domain. Note that this is not the same as the zeta function in Definition~\ref{def:zetanondegenerate}; the relation between these two zeta functions will be explained in \S\ref{sec:algebraic}.
\end{remark}



\section{Filtered equivariant symplectic homology of general Liouville domains}
\label{sec:inverselimit}

We now use an inverse limit construction to extend the definition of filtered equivariant symplectic homology $CH^L(X,\lambda)$, and the maps $CH^{L_2,L_1}(X,\lambda)$ for $L_1\le L_2$, from nondegenerate Liouville domains to arbitrary Liouville domains. (There are some other inverse limit constructions in nonequivariant symplectic homology in \cite{fh} and \cite[\S9.2]{polterovich}.) We then prove that this structure is invariant under exact symplectomorphisms of the interior of the Liouville domain.

\subsection{The inverse limit construction}
\label{subsec:ilc}

\begin{definition}
\label{def:emb}
If $(X,\lambda)$ is a Liouville domain, let $\op{Emb}(X,\lambda)$ denote the set\footnote{To avoid set-theoretic difficulties, we can assume that all manifolds under consideration are submanifolds of $\R^N$ for some $N$.} of exact symplectic embeddings
\begin{equation}
\label{eqn:varphi}
\varphi: (W,\mu) \hookrightarrow (\op{int}(X),\lambda)
\end{equation}
where $(W,\mu)$ is a nondegenerate Liouville domain.
\end{definition}

\begin{definition}
\label{def:partialorder}
If $(X,\lambda)$ is a Liouville domain, define a partial order on the set $\op{Emb}(X,\lambda)$ as follows. Given $\varphi_1:(W_1,\mu_1)\hookrightarrow (\op{int}(X),\lambda)$ and $\varphi_2:(W_2,\mu_2)\hookrightarrow (\op{int}(X),\lambda)$ in $\op{Emb}(X,\lambda)$,  we declare that $\varphi_1 < \varphi_2$ if
\[
\varphi_1(W_1) \subset \op{int}(\varphi_2(W_2)).
\]
In this situation, since $\varphi_1$ and $\varphi_2$ are embeddings, there is a unique embedding
\[
\psi_{\varphi_2,\varphi_1}: W_1 \hookrightarrow \op{int}(W_2)
\]
such that
\begin{equation}
\label{eqn:defpsi}
\varphi_1 = \varphi_2\circ\psi_{\varphi_2,\varphi_1}: W_1 \hookrightarrow \op{int}(X).
\end{equation}
One can check that $\psi_{\varphi_2,\varphi_1}$ is an exact symplectic embedding $(W_1,\mu_1)\hookrightarrow (\op{int}(W_2),\mu_2)$.
\end{definition}

\begin{lemma}
\label{lem:directed}
If $(X,\lambda)$ is a Liouville domain, then the set $\op{Emb}(X,\lambda)$ is nonempty, and any two elements of $\op{Emb}(X,\lambda)$ have a common strict upper bound. In particular, the set $\op{Emb}(X,\lambda)$, together with the partial order $<$, is a directed set.
\end{lemma}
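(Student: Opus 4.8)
The plan is to establish the three assertions in order: nonemptiness, existence of strict upper bounds, and then deduce the directed-set property. For nonemptiness, I would produce an explicit small nondegenerate Liouville domain exactly embedded in $\op{int}(X)$. The natural candidate is a small ellipsoid: pick a Darboux ball $B$ around an interior point of $X$, so that $(B, \lambda_0 + df)$ is symplectomorphic to a standard ball for an appropriate primitive, and inside it take a generic ellipsoid $E(a_1,\dots,a_n)$ with rationally independent $a_i$, which is nondegenerate. One must check that the inclusion is an \emph{exact} symplectic embedding in the sense of the paper, i.e.\ that $[(\varphi^*\lambda - \mu)|_{\partial W}] = 0 \in H^1(W;\R)$; since $W$ is an ellipsoid, $H^1(W;\R) = 0$, so this condition is automatic. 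Hence $\op{Emb}(X,\lambda) \neq \emptyset$.

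For the main step, given $\varphi_1 : (W_1,\mu_1) \hookrightarrow (\op{int}(X),\lambda)$ and $\varphi_2 : (W_2,\mu_2) \hookrightarrow (\op{int}(X),\lambda)$, I need a single $\varphi_3 : (W_3,\mu_3) \hookrightarrow (\op{int}(X),\lambda)$ in $\op{Emb}(X,\lambda)$ with $\varphi_1 < \varphi_3$ and $\varphi_2 < \varphi_3$, meaning $\varphi_1(W_1) \cup \varphi_2(W_2) \subset \op{int}(\varphi_3(W_3))$. The key geometric input is the Liouville flow on $X$: since $\op{int}(X) = \bigcup_{\delta \geq 0} \Phi_X^\delta(\op{int}(X)) $ exhausts $\op{int}(X)$ by the domains $\Phi_X^\delta(X)$, and the compact set $K = \varphi_1(W_1) \cup \varphi_2(W_2) \subset \op{int}(X)$ is contained in $\Phi_X^{\delta_0}(\op{int}(X))$ for some $\delta_0 > 0$. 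Thus $W := \Phi_X^{\delta_0}(X)$ is a slightly shrunk copy of $X$, a Liouville domain with $\lambda|_W$ equal (up to the exact symplectomorphism $\Phi_X^{\delta_0}$, cf.\ Example~\ref{ex:scaling}) to a rescaling of $\lambda$, and $K \subset \op{int}(W)$. However $W$ need not be \emph{nondegenerate}. To fix this, I perturb: replace $\lambda|_W$ by a $C^\infty$-small perturbation $\mu_3$ that is nondegenerate near $\partial W$ while keeping $K$ in the interior, using that nondegeneracy is a $C^\infty$-generic condition for contact forms and that a small perturbation of the Liouville form of a Liouville domain is again a Liouville form. Slightly more carefully, one perturbs $\partial W$ (equivalently the contact form it inherits) keeping it inside $\op{int}(X)$ and outside $K$, so that the perturbed domain $W_3$ is a nondegenerate Liouville domain, the inclusion $W_3 \hookrightarrow \op{int}(X)$ is exact (again since the relevant cohomological obstruction lives in $H^1$ and can be arranged to vanish, or one absorbs the discrepancy into the choice of primitive), and $K \subset \op{int}(W_3)$. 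Then $\varphi_3$ is this inclusion, and by Definition~\ref{def:partialorder} we get $\varphi_1 < \varphi_3$ and $\varphi_2 < \varphi_3$.

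To conclude the directed-set claim: a directed set is a nonempty set in which every pair has an upper bound; we have shown every pair has even a \emph{strict} upper bound, and the set is nonempty, so $(\op{Emb}(X,\lambda), <)$ is directed.

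\textbf{Main obstacle.} The hard part will be the perturbation step: arranging that the shrunk copy $W = \Phi_X^{\delta_0}(X)$ can be perturbed to a genuinely nondegenerate Liouville domain $W_3$ while \emph{simultaneously} (i) keeping the compact set $K$ strictly in its interior, (ii) keeping $W_3$ strictly inside $\op{int}(X)$, and (iii) preserving the exactness condition $[(\varphi_3^*\lambda - \mu_3)|_{\partial W_3}] = 0 \in H^1(W_3;\R)$. Point (i) and (ii) are easy because perturbations can be made arbitrarily $C^\infty$-small and $K$, $\partial W$ are disjoint compact sets with $\partial W \subset \op{int}(X) \setminus K$; the genericity of nondegeneracy handles the existence of such a perturbation. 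Point (iii) is the subtle one, but it is controlled: $\varphi_3^*\lambda - \mu_3$ is a closed $1$-form on $W_3$ that is $C^\infty$-close to an exact one (namely to $\varphi_3^*\lambda - \lambda|_{W}$ pulled back appropriately, which is exact since the embedding $W \hookrightarrow \op{int}(X)$ is the inclusion and $\lambda|_W$ is literally the restriction), so its cohomology class can be killed by adjusting the primitive $\mu_3$ within its allowed freedom; alternatively, one observes that the construction only ever uses restrictions of $\lambda$ to nested subdomains, for which exactness of the inclusion is automatic as in Example~\ref{ex:inclusion}. I expect the cleanest writeup takes $W_3$ to be a nondegenerate perturbation of $\Phi_X^{\delta_0}(X)$ with $\mu_3$ a correspondingly perturbed primitive of $d(\lambda|_{W_3})$, so that exactness of the inclusion follows from the argument of Example~\ref{ex:inclusion} together with $H^1$-vanishing considerations, and this is where I would spend the most care.
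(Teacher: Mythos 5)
Your proposal is correct and takes essentially the same route as the paper: shrink via the Liouville flow so that the given images land in the interior of a scaled copy of $X$, then perturb the boundary of that copy (equivalently, perturb the induced contact form generically) to achieve nondegeneracy, with exactness of the resulting embedding automatic because the form is a restriction of $\lambda$ as in Example~\ref{ex:inclusion} (or a rescaling as in Example~\ref{ex:scaling}). The only cosmetic difference is that you establish nonemptiness separately via a small irrational ellipsoid in a Darboux ball, whereas the paper obtains it from the same shrink-and-perturb construction.
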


\begin{proof}
If $\delta>0$, let $X_\delta\subset \op{int}(X)$ denote the image of the map $\Phi_X^\delta$ in Example~\ref{ex:scaling}. The sets $X_\delta$ for $\delta>0$ satisfy $X_{\delta_2}\subset X_{\delta_1}$ when $\delta_1\le \delta_2$ and exhaust $\op{int}(X)$. It follows that if $\varphi:(W,\mu)\hookrightarrow (\op{int}(X),\lambda)$ is an element of $\op{Emb}(X,\lambda)$, then we can choose $\delta>0$ sufficiently small with respect to $\varphi$ so that $\varphi(W)\subset \op{int}(X_\delta)$.

If $(X,\lambda)$ is nondegenerate, then $\Phi_X^\delta\in\op{Emb}(X,\lambda)$ and $\varphi < \Phi_X^\delta$. If $(X,\lambda)$ is degenerate, then we can find a subset $\widehat{X}\subset X$ whose boundary is a perturbation of $\partial X$ such that $(\widehat{X},\lambda)$ is nondegenerate (see Lemma~\ref{lem:Lapproxexist} below), and $\varphi < \Phi_X^\delta\big|_{(\widehat{X},e^{-\delta}\lambda)}$.
\end{proof}

On the directed set $(\op{Emb}(X,\lambda),<)$, filtered equivariant symplectic homology and transfer morphisms define an inverse system of $\Z/2$-graded vector spaces. This inverse system assigns to each element $\varphi:(W,\mu)\hookrightarrow(\op{int}(X),\lambda)$ of $\op{Emb}(X,\lambda)$ the $\Z/2$-graded vector space $CH^L(W,\mu)$. If $\varphi_1,\varphi_2\in\op{Emb}(X,\lambda)$ with $\varphi_1<\varphi_2$ as in Definition~\ref{def:partialorder}, then the inverse system assigns to the pair $(\varphi_1,\varphi_2)$ the transfer morphism
\begin{equation}
\label{eqn:inversetransfer}
CH^L(\psi_{\varphi_2,\varphi_1}) : CH^L(W_2,\mu_2) \longrightarrow CH^L(W_1,\mu_1).
\end{equation}

It now makes sense to take the inverse limit of this inverse system.

\begin{definition}
\label{def:mathring}
If $(X,\lambda)$ is a Liouville domain and $L\in\R$, define
\[
\mathring{CH}^L(X,\lambda) = \varprojlim \left\{CH^L(W,\mu) \;\big|\; (\varphi:(W,\mu)\hookrightarrow (\op{int}(X),\lambda)) \in\op{Emb}(X,\lambda) \right\}.
\]
\end{definition}

Concretely, an element of $\mathring{CH}^L(X,\lambda)$ consists of a function assigning, to each map $\varphi:(W,\mu)\hookrightarrow (\op{int}(X),\lambda)$ in $\op{Emb}(X,\lambda)$, an element $x_\varphi\in CH^L(W,\mu)$, such that if $\varphi_1 < \varphi_2$ as in Definition~\ref{def:partialorder}, then
\[
x_{\varphi_1} = CH^L(\psi_{\varphi_2,\varphi_1})(x_{\varphi_2}) \in CH^L(W_1,\mu_1).
\]

\begin{remark}
The inverse limit $\mathring{CH}^L(X,\lambda)$ is a $\Z/2$-graded vector space, but it might not be finite dimensional. However it is finite dimensional when $(X,\lambda)$ is nondegenerate, as we will see in Proposition~\ref{prop:rhoL} below, and more generally when $e^L$ is not an accumulation point of $\op{Spec}(\partial X,\lambda)$, as we will see in Proposition~\ref{prop:approx}.
\end{remark}

Now suppose that $L_1\le L_2$. Then for an embedding $\varphi:(W,\mu)\hookrightarrow (\op{int}(X),\lambda)$ in $\op{Emb}(X,\lambda)$, we have a persistence morphism
\begin{equation}
\label{eqn:openpersistence}
CH^{L_2,L_1}(W,\mu): CH^{L_1}(W,\mu) \longrightarrow CH^{L_2}(W,\mu).
\end{equation}
If $\varphi_1,\varphi_2\in\op{Emb}(X,\lambda)$ with $\varphi_1<\varphi_2$ as in Definition~\ref{def:partialorder}, then since transfer morphisms commute with persistence morphisms by \eqref{eqn:transferfiltration}, we have a commutative diagram
\[
\begin{CD}
CH^{L_1}(W_2,\mu_2) @>{CH^{L_1}(\psi_{\varphi_2,\varphi_1})}>> CH^{L_1}(W_1,\mu_1)\\
@V{CH^{L_2,L_1}(W_2,\mu_2)}VV  @VV{CH^{L_2,L_1}(W_1,\mu_1)}V\\
CH^{L_2}(W_2,\mu_2) @>{CH^{L_2}(\psi_{\varphi_2,\varphi_1})}>> CH^{L_2}(W_1,\mu_1).
\end{CD}
\]
This means that the maps \eqref{eqn:openpersistence} define a morphism of inverse systems.

\begin{definition}
\label{def:mathringpersistence}
If $(X,\lambda)$ is a Liouville domain and $L_1\le L_2$, define
\[
\mathring{CH}^{L_2,L_1}(X,\lambda) : \mathring{CH}^{L_1}(X,\lambda) \longrightarrow \mathring{CH}^{L_2}(X,\lambda)
\]
to be the map on inverse limits induced by the morphism of inverse systems defined by the maps \eqref{eqn:openpersistence}.
\end{definition}

It follows from Definition~\ref{def:mathringpersistence} and the Persistence property in Proposition~\ref{prop:CHL} that
\[
\mathring{CH}^{L,L}(X,\lambda) = \op{id}_{\mathring{CH}^L(X,\lambda)},
\]
and if $L_1\le L_2\le L_3$ then
\[
\mathring{CH}^{L_3,L_1}(X,\lambda) = \mathring{CH}^{L_3,L_2}(X,\lambda) \circ \mathring{CH}^{L_2,L_1}(X,\lambda).
\]

\subsection{Comparison with the nondegenerate case}

We now show that if $(X,\lambda)$ is nondegenerate, then the inverse limit construction in \S\ref{subsec:ilc} simply recovers $CH(X,\lambda)$. (However the inverse limit construction is still useful here because it enables a clean proof of invariance under exact symplectomorphisms of the interior, as we will see in \S\ref{sec:openinvariance} below.)

\begin{proposition}
\label{prop:rhoL}
Let $(X,\lambda)$ be a nondegenerate Liouville domain and let $L\in\R$. Then there is a canonical isomorphism
\[
\rho^L: \mathring{CH}^L(X,\lambda) \stackrel{\simeq}{\longrightarrow} CH^L(X,\lambda).
\]
\end{proposition}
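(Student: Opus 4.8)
The plan is to construct the map $\rho^L$ as a projection map of the inverse limit, using the fact that the directed set $\op{Emb}(X,\lambda)$ has a particularly simple cofinal family when $(X,\lambda)$ is already nondegenerate, namely the scaling embeddings $\Phi_X^\delta:(X,e^{-\delta}\lambda)\hookrightarrow(\op{int}(X),\lambda)$ for $\delta>0$. First I would observe, as in the proof of Lemma~\ref{lem:directed}, that for every $\varphi:(W,\mu)\hookrightarrow(\op{int}(X),\lambda)$ in $\op{Emb}(X,\lambda)$ one has $\varphi<\Phi_X^\delta$ for all sufficiently small $\delta>0$; hence the subset $\{\Phi_X^\delta\mid\delta>0\}$, ordered so that $\Phi_X^{\delta_2}<\Phi_X^{\delta_1}$ for $\delta_1\le\delta_2$, is cofinal in $\op{Emb}(X,\lambda)$. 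Therefore the inverse limit may be computed over this cofinal subsystem: $\mathring{CH}^L(X,\lambda)\cong\varprojlim_{\delta\to 0}CH^L(X,e^{-\delta}\lambda)$, where the bonding map for $\delta_1\le\delta_2$ is the transfer morphism $CH^L(\psi_{\Phi_X^{\delta_1},\Phi_X^{\delta_2}})$, and one checks from \eqref{eqn:defpsi} that $\psi_{\Phi_X^{\delta_1},\Phi_X^{\delta_2}}=\Phi_X^{\delta_2-\delta_1}$ up to the scaling identification, so this bonding map is $CH^L(\Phi_X^{\delta_2-\delta_1})$.

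Next I would use the Scaling property of Proposition~\ref{prop:CHL}: the scaling isomorphism $s_\delta:CH^L(X,\lambda)\stackrel{\simeq}{\to}CH^{L-\delta}(X,e^{-\delta}\lambda)$ together with the persistence map $CH^{L,L-\delta}(X,e^{-\delta}\lambda)$ gives, by \eqref{eqn:scaling3}, exactly $CH^L(\Phi_X^\delta)=CH^{L,L-\delta}(X,e^{-\delta}\lambda)\circ s_\delta$. Since $(e^{L-\delta},e^L]$ contains no points of $\op{Spec}(\partial X,\lambda)$ once $\delta$ is small enough (nondegeneracy and compactness make the spectrum discrete), Lemma~\ref{lem:intervaliso} shows $CH^{L,L-\delta}(X,e^{-\delta}\lambda)$ is an isomorphism, and $s_\delta$ is an isomorphism by construction. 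Hence each bonding map in the cofinal subsystem is an isomorphism once $\delta_1,\delta_2$ are small. An inverse system in which all bonding maps past some point are isomorphisms has inverse limit canonically isomorphic to any of those terms; concretely, define $\rho^L$ to send an element $(x_\varphi)_\varphi\in\mathring{CH}^L(X,\lambda)$ to $s_\delta^{-1}\bigl(\text{(suitable persistence-corrected version of }x_{\Phi_X^\delta})\bigr)\in CH^L(X,\lambda)$ for $\delta$ small; more cleanly, one can take $\rho^L$ to be the composite of projection to the $\Phi_X^\delta$ factor with $(CH^L(\Phi_X^\delta))^{-1}$, and verify this is independent of the small $\delta$ chosen using the compatibility relation defining elements of the inverse limit. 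Injectivity of $\rho^L$ follows because an element of the inverse limit is determined by its values on a cofinal family; surjectivity follows by setting $x_{\Phi_X^\delta}=CH^L(\Phi_X^\delta)(y)$ for a given $y\in CH^L(X,\lambda)$ and then extending to all of $\op{Emb}(X,\lambda)$ via the transfer morphisms, the functoriality \eqref{eqn:transferfunctor} guaranteeing consistency.

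The main obstacle I expect is checking that $\rho^L$ is genuinely canonical, i.e.\ independent of the auxiliary small parameter $\delta$ and compatible with the transfer morphisms used to define elements of the inverse limit over the full directed set (not just the cofinal subsystem). This requires carefully chasing the commuting squares \eqref{eqn:transferfiltration}, \eqref{eqn:transferfunctor}, and \eqref{eqn:scaling3}, and using that for any $\varphi:(W,\mu)\hookrightarrow(\op{int}(X),\lambda)$ the composite $\Phi_X^{\delta}=\varphi\circ\psi_{\Phi_X^\delta,\varphi}$ (for small $\delta$) forces $x_\varphi=CH^L(\psi_{\Phi_X^\delta,\varphi})(x_{\Phi_X^\delta})$, so that the value on $\Phi_X^\delta$ really does determine the whole family. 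A subsidiary point to be careful about is the awkwardness flagged in Remark~\ref{rem:awkward}: I must only invoke the isomorphism \eqref{eqn:awkward} at the level of Lemma~\ref{lem:intervaliso} (which has already absorbed that subtlety), and not assume \eqref{eqn:awkward} intertwines the persistence maps.
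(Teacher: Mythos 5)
Your proposal follows essentially the same route as the paper: restrict the inverse limit to the cofinal family $\{\Phi_X^\delta\}_{\delta>0}$ of scaling embeddings (cofinality coming from the observation in the proof of Lemma~\ref{lem:directed}), and then show that $CH^L(\Phi_X^\delta)=CH^{L,L-\delta}(X,e^{-\delta}\lambda)\circ s_\delta$ is an isomorphism for all sufficiently small $\delta$, using the Scaling property \eqref{eqn:scaling3} and Lemma~\ref{lem:intervaliso}. Your remarks on canonicity, on determination of an element of the inverse limit by its values on the cofinal family, and on only using \eqref{eqn:awkward} through Lemma~\ref{lem:intervaliso} are all consistent with the paper's argument.

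One correction is needed in the spectral-gap step: the hypothesis you invoke is stated for the wrong contact form. Lemma~\ref{lem:intervaliso} applied to $CH^{L,L-\delta}(X,e^{-\delta}\lambda)$ requires $\left(e^{L-\delta},e^{L}\right]\cap\op{Spec}(\partial X,e^{-\delta}\lambda)=\emptyset$, and since $\op{Spec}(\partial X,e^{-\delta}\lambda)=e^{-\delta}\op{Spec}(\partial X,\lambda)$, this is equivalent to $\left(e^{L},e^{L+\delta}\right]\cap\op{Spec}(\partial X,\lambda)=\emptyset$. As you wrote it --- $\left(e^{L-\delta},e^{L}\right]\cap\op{Spec}(\partial X,\lambda)=\emptyset$ --- the condition fails for every $\delta>0$ whenever $e^{L}$ itself lies in the action spectrum, a case the proposition does not exclude. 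The corrected condition does hold for all sufficiently small $\delta$, because the action spectrum is closed and discrete so there is a gap strictly above $e^{L}$; with this adjustment (which is exactly the interval used in the paper's proof) your argument goes through.
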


\begin{proof}
Recall that if $\delta>0$, then we have an exact symplectic embedding
\[
\Phi_X^\delta: (X,e^{-\delta}\lambda)\hookrightarrow (X,\lambda)
\]
which is in $\op{Emb}(X,\lambda)$. Moreover, if $\delta_1 < \delta_2$, then $\Phi_X^{\delta_2} < \Phi_X^{\delta_1}$. Thus we have an inclusion of the set of $\delta>0$ (with the reverse of the usual ordering) into the directed set $\op{Emb}(X,\lambda)$. There is then a restriction map of inverse limits
\begin{equation}
\label{eqn:restriction}
\mathring{CH}^L(X,\lambda) \longrightarrow \varprojlim \left\{CH^L(X,e^{-\delta}\lambda) \;\big|\; \delta>0\right\}.
\end{equation}

As noted in the proof of Lemma~\ref{lem:directed}, if $\varphi\in\op{Emb}(X,\lambda)$, then $\varphi < \Phi_X^\delta$ whenever $\delta$ is sufficiently small with respect to $\varphi$, i.e.\ the set $\{\Phi_X^\delta\mid\delta>0\}$ is cofinal in $\op{Emb}(X,\lambda)$. It follows that the restriction map \eqref{eqn:restriction} is an isomorphism.

To complete the proof of the proposition, we now show that there is a canonical isomorphism
\begin{equation}
\label{eqn:doit}
CH^L(X,\lambda)
\stackrel{\simeq}{\longrightarrow}
\varprojlim\left\{CH^L(X,e^{-\delta}\lambda) \;\big|\; \delta>0\right\}.
\end{equation}
We define the map \eqref{eqn:doit} to be the map induced by the transfer morphisms
\begin{equation}
\label{eqn:doit2}
CH^L(\Phi_X^\delta) : CH^L(X,\lambda) \longrightarrow CH^L(X,e^{-\delta}\lambda)
\end{equation}
for each $\delta>0$. To prove that the resulting map \eqref{eqn:doit} is an isomorphism, it is enough to show that the map \eqref{eqn:doit2} is an isomorphism for $\delta>0$ sufficiently small. Since $\op{Spec}(\partial X,\lambda)$ is discrete, if $\delta>0$ is sufficiently small, then
\[
\left(e^L,e^{L+\delta}\right]\cap \op{Spec}(\partial X,\lambda) = \emptyset.
\]
For such $\delta$, it follows from equation \eqref{eqn:scaling3} and Lemma~\ref{lem:intervaliso} that the map \eqref{eqn:doit2} is an isomorphism.
\end{proof}

\begin{proposition}
\label{prop:rhoLpersistence}
Let $(X,\lambda)$ be a nondegenerate Liouville domain and let $L_1\le L_2$. Then the diagram
\[
\begin{CD}
\mathring{CH}^{L_1}(X,\lambda) @>{\rho^{L_1}}>{\simeq}> CH^{L_1}(X,\lambda)\\
@V{\mathring{CH}^{L_2,L_1}(X,\lambda)}VV @VV{CH^{L_2,L_1}(X,\lambda)}V\\
\mathring{CH}^{L_2}(X,\lambda) @>{\rho^{L_2}}>{\simeq}> CH^{L_2}(X,\lambda)
\end{CD}
\]
commutes.
\end{proposition}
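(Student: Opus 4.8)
The plan is to check commutativity by tracing through the two-step construction of the isomorphism $\rho^L$ from the proof of Proposition~\ref{prop:rhoL}, and observing that each step is compatible with the persistence maps. Recall that $\rho^L$ is the composite of the restriction isomorphism \eqref{eqn:restriction} to the cofinal subsystem indexed by $\delta>0$ (with reversed order), followed by the inverse of the isomorphism \eqref{eqn:doit}, which is induced by the transfer morphisms $CH^L(\Phi_X^\delta)$. So it suffices to verify that both the restriction map and the map \eqref{eqn:doit} intertwine the persistence morphisms on inverse limits with the persistence morphism $CH^{L',L}(X,\lambda)$.

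First I would handle the restriction map. The morphism $\mathring{CH}^{L',L}(X,\lambda)$ is by Definition~\ref{def:mathringpersistence} induced by the maps $CH^{L',L}(W,\mu)$ over all of $\op{Emb}(X,\lambda)$, and restricting to the cofinal family $\{\Phi_X^\delta\}$ simply gives the maps $CH^{L',L}(X,e^{-\delta}\lambda)$. Since an inverse limit over a directed set agrees with the inverse limit over any cofinal subsystem, and this identification is natural in morphisms of inverse systems, the square
\begin{equation*}
\begin{CD}
\mathring{CH}^L(X,\lambda) @>{\eqref{eqn:restriction}}>{\simeq}> \varprojlim_\delta CH^L(X,e^{-\delta}\lambda)\\
@V{\mathring{CH}^{L',L}(X,\lambda)}VV @VVV\\
\mathring{CH}^{L'}(X,\lambda) @>{\eqref{eqn:restriction}}>{\simeq}> \varprojlim_\delta CH^{L'}(X,e^{-\delta}\lambda)
\end{CD}
\end{equation*}
commutes, where the right vertical arrow is induced by the $CH^{L',L}(X,e^{-\delta}\lambda)$.

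Next I would handle the map \eqref{eqn:doit}. It is enough to show that for each fixed $\delta>0$ the square relating $CH^L(X,\lambda)\to CH^{L'}(X,\lambda)$ to $CH^L(X,e^{-\delta}\lambda)\to CH^{L'}(X,e^{-\delta}\lambda)$ via the transfer morphisms $CH^L(\Phi_X^\delta)$ and $CH^{L'}(\Phi_X^\delta)$ commutes; this is precisely the commutative diagram \eqref{eqn:transferfiltration} in the Transfer Morphisms property of Proposition~\ref{prop:CHL}, applied to $\varphi=\Phi_X^\delta$. Passing to inverse limits over $\delta$, we conclude that \eqref{eqn:doit} intertwines $CH^{L',L}(X,\lambda)$ with the persistence map on $\varprojlim_\delta CH^{\bullet}(X,e^{-\delta}\lambda)$. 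Combining this with the previous paragraph and inverting, we get the desired commutativity of the square in the statement. The only mildly delicate point — and the one I would be most careful about — is the bookkeeping that the map on inverse limits induced by a morphism of cofinal subsystems agrees, under the canonical identifications with the full inverse limits, with the map induced on the full systems; but this is a standard and formal fact about inverse limits over directed sets, so I do not expect a genuine obstacle here.
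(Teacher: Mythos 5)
Your proposal is correct and follows essentially the same route as the paper: the paper likewise decomposes $\rho^L$ through the family $\Phi_X^\delta$, using Definition~\ref{def:mathringpersistence} for compatibility of the evaluation/restriction maps with the persistence morphisms and the diagram \eqref{eqn:transferfiltration} for compatibility of the transfer morphisms $CH^L(\Phi_X^\delta)$ with the persistence maps. The only cosmetic difference is that the paper fixes a single sufficiently small $\delta$ and checks a three-column diagram directly, whereas you pass to the inverse limit over all $\delta$ and invoke naturality of the cofinal identification; both arguments rest on the same two facts.
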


\begin{proof}
Given $\delta>0$, consider the diagram
\[
\begin{tikzcd}[column sep=large]
\mathring{CH}^{L_1}(X,\lambda)
\arrow[r] \arrow[d, "{\mathring{CH}^{L_2,L_1}(X,\lambda)}"'] \arrow[rr, bend left=20, "\rho^{L_1}"'] & 
CH^{L_1}(X,e^{-\delta}\lambda)
\arrow[d, "{CH^{L_2,L_1}(X,e^{-\delta}\lambda)}"'] & 
CH^{L_1}(X,\lambda) 
\arrow[l, "CH^{L_1}(\Phi_X^\delta)"] \arrow[d, "{CH^{L_2,L_1}(X,\lambda)}"]
\\
\mathring{CH}^{L_2}(X,\lambda)
\arrow[r] \arrow[rr, bend right=20, "\rho^{L_2}"] &
CH^{L_2}(X,e^{-\delta}\lambda) &
CH^{L_2}(X,\lambda)
\arrow[l, "CH^{L_2}(\Phi_X^\delta)"']
\end{tikzcd}
\]
Here the horizontal arrow on the upper left inputs an element of the inverse limit $\mathring{CH}^{L_1}(X,\lambda)$ and evaluates it on the embedding $\Phi_X^\delta$ in $\op{Emb}(X,\lambda)$. The horizontal arrow on the lower left is defined the same way but with $L_2$ in place of $L_1$. The upper and lower triangles commute by the definition of $\rho^L$. The left square commutes by Definition~\ref{def:mathringpersistence}. The right square commutes because transfer morphisms commute with persistence maps by \eqref{eqn:transferfiltration}. To complete the proof of the proposition, we can take $\delta>0$ sufficiently small as in the proof of Proposition~\ref{prop:rhoL} so that the horizontal arrows on the right hand side are isomorphisms.
\end{proof}

\begin{remark}
Propositions~\ref{prop:rhoL} and \ref{prop:rhoLpersistence} show that if $(X,\lambda)$ is a nondegenerate Liouville domain, then $\mathring{CH}^L(X,\lambda)$, as defined in Definition~\ref{def:mathring}, and $\mathring{CH}^{L_2,L_1}(X,\lambda)$ as defined in Definition~\ref{def:mathringpersistence}, constitute a persistence module which is canonically isomorphic to $CH(X,\lambda)$. In the degenerate case, $\mathring{CH}^L(X,\lambda)$ and $\mathring{CH}^{L_2,L_1}(X,\lambda)$ might not satisfy the finite dimensionality or properness requirements in Definition~\ref{def:pm} for a persistence module.
\end{remark}


\subsection{Invariance under exact symplectomorphisms of the interior}
\label{sec:openinvariance}

The following proposition defines ``transfer morphisms'' for $\mathring{CH}^L$:

\begin{proposition}
\label{prop:openmorphism}
Let $(X,\lambda)$ and $(X',\lambda')$ be Liouville domains. Then an exact symplectic embedding
\[
\phi: (\op{int}(X),d\lambda) \hookrightarrow (\op{int}(X'),d\lambda')
\]
induces maps
\begin{equation}
\label{eqn:openmorphism}
\mathring{CH}^L(\phi) : \mathring{CH}^L(X',\lambda') \longrightarrow \mathring{CH}^L(X,\lambda)
\end{equation}
for each $L\in\R$ with the following properties:
\begin{description}
\item{(a)}
If $L_1\le L_2$, then the diagram
\begin{equation}
\label{eqn:mathringpercom}
\begin{CD}
\mathring{CH}^{L_1}(X',\lambda') @>{\mathring{CH}^{L_1}(\phi)}>> \mathring{CH}^{L_1}(X,\lambda)\\
@V{\mathring{CH}^{L_2,L_1}(X',\lambda')}VV @VV{\mathring{CH}^{L_2,L_1}(X,\lambda)}V\\
\mathring{CH}^{L_2}(X',\lambda') @>{\mathring{CH}^{L_2}(\phi)}>> \mathring{CH}^{L_2}(X,\lambda)
\end{CD}
\end{equation}
commutes.
\item{(b)}
If $X=X'$ and $\phi=\op{id}_X$, then $\mathring{CH}^L(\op{id}_X) = \op{id}_{\mathring{CH}^L(X,\lambda)}$.
\item{(c)}
If $(X'',\lambda'')$ is another Liouville domain, and if 
\[
\phi': (\op{int}(X'),d\lambda') \hookrightarrow (\op{int}(X''),d\lambda'')
\]
is another exact symplectic embedding, then
\begin{equation}
\label{eqn:openmorphismcomposition}
\mathring{CH}^L(\phi'\circ\phi) =
\mathring{CH}^L(\phi) \circ
\mathring{CH}^L(\phi') :
\mathring{CH}^L(X'',\lambda'') \longrightarrow \mathring{CH}^L(X,\lambda).
\end{equation}
\end{description}
\end{proposition}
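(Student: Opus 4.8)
The plan is to produce $\mathring{CH}^L(\phi)$ as the map of inverse limits coming from a map of directed sets $\phi_*:\op{Emb}(X,\lambda)\to\op{Emb}(X',\lambda')$ built from $\phi$, and then to deduce (a)--(c) formally. First I would define $\phi_*$. Given $\varphi:(W,\mu)\hookrightarrow(\op{int}(X),\lambda)$ in $\op{Emb}(X,\lambda)$, the composite $\phi\circ\varphi:W\hookrightarrow\op{int}(X')$ is a symplectic embedding $(W,d\mu)\hookrightarrow(\op{int}(X'),d\lambda')$; writing $\phi^*\lambda'-\lambda=df$ on $\op{int}(X)$ (possible since $\phi$ is exact),
\[
(\phi\circ\varphi)^*\lambda'-\mu=\varphi^*(\phi^*\lambda'-\lambda)+(\varphi^*\lambda-\mu)=d(f\circ\varphi)+(\varphi^*\lambda-\mu),
\]
so $\big[((\phi\circ\varphi)^*\lambda'-\mu)|_{\partial W}\big]=\big[(\varphi^*\lambda-\mu)|_{\partial W}\big]=0\in H^1(W;\R)$, the last equality because $\varphi$ is an exact symplectic embedding. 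Hence $\phi\circ\varphi\in\op{Emb}(X',\lambda')$, and we set $\phi_*(\varphi)=\phi\circ\varphi$. Since $\phi$ is a homeomorphism onto its open image, $\varphi_1<\varphi_2$ implies $\phi_*(\varphi_1)<\phi_*(\varphi_2)$, and from $\phi\circ\varphi_1=(\phi\circ\varphi_2)\circ\psi_{\varphi_2,\varphi_1}$ together with the uniqueness clause of Definition~\ref{def:partialorder} we get $\psi_{\phi_*(\varphi_2),\phi_*(\varphi_1)}=\psi_{\varphi_2,\varphi_1}$.

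The key observation is then that the inverse system defining $\mathring{CH}^L(X',\lambda')$ — the one sending $\varphi'$ to $CH^L$ of the domain of $\varphi'$, with transfer morphisms as transition maps — pulls back along $\phi_*$ to \emph{exactly} the inverse system over $\op{Emb}(X,\lambda)$ defining $\mathring{CH}^L(X,\lambda)$ in Definition~\ref{def:mathring}: the pullback sends $\varphi:(W,\mu)\hookrightarrow(\op{int}(X),\lambda)$ to $CH^L(W,\mu)$, with transition map $CH^L(\psi_{\phi_*(\varphi_2),\phi_*(\varphi_1)})=CH^L(\psi_{\varphi_2,\varphi_1})$ by the previous paragraph. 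Consequently the canonical map of inverse limits associated to the map of directed sets $\phi_*$ gives
\[
\mathring{CH}^L(\phi):\mathring{CH}^L(X',\lambda')\longrightarrow\mathring{CH}^L(X,\lambda),\qquad (x_{\varphi'})_{\varphi'}\longmapsto(x_{\phi\circ\varphi})_{\varphi},
\]
where the right-hand family is compatible precisely because the transition maps of the $X$-system coincide with those of the pullback; this map preserves the $\Z/2$-grading since every transfer morphism does.

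Properties (a)--(c) are now formal. For (a), the persistence morphism $\mathring{CH}^{L',L}$ acts term-by-term on a compatible family by the maps $CH^{L',L}(W,\mu)$ of Proposition~\ref{prop:CHL}, and these are the same maps in the $X$- and $X'$-systems, so $\mathring{CH}^{L',L}$ commutes with restriction along $\phi_*$; hence the square \eqref{eqn:mathringpercom} commutes. For (b), when $X=X'$ and $\phi=\op{id}$ we have $\phi_*=\op{id}_{\op{Emb}(X,\lambda)}$, so $\mathring{CH}^L(\op{id}_X)=\op{id}_{\mathring{CH}^L(X,\lambda)}$. For (c), $(\phi'\circ\phi)\circ\varphi=\phi'\circ(\phi\circ\varphi)$ gives $(\phi'\circ\phi)_*=\phi'_*\circ\phi_*$ as maps $\op{Emb}(X,\lambda)\to\op{Emb}(X'',\lambda'')$, and the map on inverse limits induced by a composite of maps of directed sets is the composite of the induced maps in the opposite order, which is exactly \eqref{eqn:openmorphismcomposition}.

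The one step that is more than bookkeeping is the verification in the first paragraph that $\phi\circ\varphi$ is again an \emph{exact} symplectic embedding into $\op{int}(X')$: this is precisely where the hypothesis that $\phi$ is exact, and not merely symplectic, is used, and it is what makes the entire inverse-limit construction functorial in $\phi$. Everything else is the standard contravariant functoriality of $\varprojlim$ along maps of directed sets.
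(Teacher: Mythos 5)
Your construction is correct and produces exactly the same map as the paper, but it is organized along a genuinely cleaner route. The paper also observes that $\phi\circ\varphi\in\op{Emb}(X',\lambda')$ (you additionally carry out the exactness computation that the paper leaves implicit, which is indeed the one non-formal input), but it does not evaluate $x'$ there directly: instead it chooses, via Lemma~\ref{lem:directed}, an auxiliary $\varphi'\in\op{Emb}(X',\lambda')$ with $\phi\circ\varphi<\varphi'$ and sets $x_\varphi=CH^L(\psi_{\varphi',\phi\circ\varphi})(x'_{\varphi'})$, which then requires a separate check that $x_\varphi$ is independent of $\varphi'$, plus verifications of the compatibility condition and of properties (b) and (c), each resting on the functoriality of transfer morphisms; by the consistency condition for $x'$ this definition coincides with your $x_\varphi=x'_{\phi\circ\varphi}$. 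Your route — noting that $\varphi\mapsto\phi\circ\varphi$ is an order-preserving map of directed sets, that $\psi_{\phi\circ\varphi_2,\phi\circ\varphi_1}=\psi_{\varphi_2,\varphi_1}$ by the uniqueness clause of Definition~\ref{def:partialorder}, and that the pulled-back inverse system is literally the system defining $\mathring{CH}^L(X,\lambda)$ — makes the construction and (a)--(c) formal consequences of the contravariant functoriality of $\varprojlim$, with no further appeal to Lemma~\ref{lem:directed} or to transfer-morphism functoriality beyond what is already packaged in the inverse system itself. What the paper's formulation buys is a presentation that only uses the partial order and transfer morphisms between strictly nested domains, which would be the necessary phrasing if one did not wish to treat $\phi\circ\varphi$ itself as an admissible index; since it is admissible, your streamlining is available and correct (your parenthetical justification that $\phi_*$ preserves the order, via openness of the equidimensional embedding $\phi$, is the only small point that needed saying, and you said it).
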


\begin{corollary}
\label{cor:openinvariance}
If $(X,\lambda)$ and $(X',\lambda')$ are nondegenerate Liouville domains, then an exact symplectomorphism $(\op{int}(X),d\lambda)\stackrel{\simeq}{\to}(\op{int}(X'),d\lambda')$ induces an isomorphism of persistence modules $CH(X',\lambda')\stackrel{\simeq}{\to} CH(X,\lambda)$.
\end{corollary}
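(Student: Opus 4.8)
The plan is to deduce the corollary formally from Proposition~\ref{prop:openmorphism} together with the comparison results in Propositions~\ref{prop:rhoL} and~\ref{prop:rhoLpersistence}. First I would note that an exact symplectomorphism $\phi:(\op{int}(X),d\lambda)\stackrel{\simeq}{\to}(\op{int}(X'),d\lambda')$ is in particular an exact symplectic embedding, and that its inverse $\phi^{-1}$ is again an exact symplectomorphism: if $\phi^*\lambda'-\lambda=dg$ for some $g\in C^\infty(\op{int}(X))$, then applying $(\phi^{-1})^*$ gives $\lambda'-(\phi^{-1})^*\lambda=d(g\circ\phi^{-1})$, so $(\phi^{-1})^*\lambda-\lambda'=-d(g\circ\phi^{-1})$ is exact. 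Hence Proposition~\ref{prop:openmorphism} produces, for every $L\in\R$, maps $\mathring{CH}^L(\phi):\mathring{CH}^L(X',\lambda')\to\mathring{CH}^L(X,\lambda)$ and $\mathring{CH}^L(\phi^{-1}):\mathring{CH}^L(X,\lambda)\to\mathring{CH}^L(X',\lambda')$.

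Next I would apply properties (b) and (c) of Proposition~\ref{prop:openmorphism} to the compositions $\phi^{-1}\circ\phi=\op{id}_{\op{int}(X)}$ and $\phi\circ\phi^{-1}=\op{id}_{\op{int}(X')}$. This shows that $\mathring{CH}^L(\phi)$ and $\mathring{CH}^L(\phi^{-1})$ are mutually inverse, so $\mathring{CH}^L(\phi)$ is an isomorphism for every $L$. Property (a) of Proposition~\ref{prop:openmorphism} then says these isomorphisms commute with the maps $\mathring{CH}^{L',L}$, i.e.\ they assemble into an isomorphism of the structures $\bigl(\mathring{CH}^L(\,\cdot\,),\mathring{CH}^{L',L}(\,\cdot\,)\bigr)$ attached to $(X',\lambda')$ and $(X,\lambda)$.

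Finally I would invoke nondegeneracy of $(X,\lambda)$ and $(X',\lambda')$: by Proposition~\ref{prop:rhoL} the canonical maps $\rho^L$ and $\rho'^L$ identify $\mathring{CH}^L$ with $CH^L$ for the two domains, and by Proposition~\ref{prop:rhoLpersistence} these identifications are compatible with the persistence morphisms $CH^{L',L}$. Transporting $\mathring{CH}^L(\phi)$ across them, i.e.\ setting $CH^L(\phi)\eqdef\rho^L\circ\mathring{CH}^L(\phi)\circ(\rho'^L)^{-1}$, yields isomorphisms $CH^L(X',\lambda')\stackrel{\simeq}{\to}CH^L(X,\lambda)$ commuting with the maps $CH^{L',L}$, hence an isomorphism of persistence modules $CH(X',\lambda')\stackrel{\simeq}{\to}CH(X,\lambda)$. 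The $\Z/2$-grading is preserved throughout because the transfer morphisms, the persistence morphisms, and the maps $\rho^L$ all preserve it. Since the corollary is essentially a formal consequence of statements already established, there is no genuinely hard step; the only point requiring a moment's care is checking that $\phi^{-1}$ is exact, so that the functoriality in Proposition~\ref{prop:openmorphism} can be used in both directions to conclude invertibility.
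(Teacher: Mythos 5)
Your proposal is correct and follows essentially the same route as the paper: apply Proposition~\ref{prop:openmorphism} (using properties (a)--(c) and the exactness of $\phi^{-1}$) to get isomorphisms $\mathring{CH}^L(\phi)$ commuting with the persistence maps, then transport them to $CH$ via Propositions~\ref{prop:rhoL} and~\ref{prop:rhoLpersistence}. The only difference is that you spell out the (easy) verification that $\phi^{-1}$ is exact, which the paper leaves implicit.
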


\begin{proof}
By Proposition~\ref{prop:openmorphism}, $\phi$ induces an isomorphism
\begin{equation}
\label{eqn:phiii}
\mathring{CH}^L(\phi): \mathring{CH}^L(X',\lambda')\stackrel{\simeq}{\longrightarrow} \mathring{CH}^L(X,\lambda)
\end{equation}
commuting with the persistence morphisms as in \eqref{eqn:mathringpercom}. By Propositions~\ref{prop:rhoL} and \ref{prop:rhoLpersistence}, this gives an isomorphism of persistence modules $CH(X',\lambda')\stackrel{\simeq}{\to} CH(X,\lambda)$.
\end{proof}

\begin{remark}
\label{rem:mathringnotpersistent}
In Corollary~\ref{cor:openinvariance}, without the nondegeneracy hypothesis, we still get isomorphisms \eqref{eqn:phiii} for each $L$, which commute with the maps in Definition~\ref{def:mathringpersistence}.
\end{remark}

\begin{proof}[Proof of Proposition~\ref{prop:openmorphism}.] We proceed in four steps.

{\em Step 1.\/} We first define the map \eqref{eqn:openmorphism}.

Let $x'\in\mathring{CH}^L(X',\lambda')$. The element $x'$ assigns to each embedding $\varphi':(W',\mu')\hookrightarrow (\op{int}(X'),\lambda')$ in $\op{Emb}(X',\lambda')$ an element $x'_{\varphi'}\in CH^L(W',\mu')$, such that if $\varphi'_1,\varphi'_2\in\op{Emb}(X',\lambda')$ with $\varphi'_1 < \varphi'_2$, then
\begin{equation}
\label{eqn:x'consistent}
x'_{\varphi'_1} = CH^L(\psi_{\varphi'_2,\varphi'_1})(x'_{\varphi'_2}).
\end{equation}
We want to define an element $x=\mathring{CH}^L(\phi)(x')$, which will associate to each embedding $\varphi:(W,\mu)\hookrightarrow (\op{int}(X),\lambda)$ in $\op{Emb}(X,\lambda)$ an element $x_\varphi\in CH^L(W,\mu)$, such that if $\varphi_1,\varphi_2\in\op{Emb}(X,\lambda)$ with $\varphi_1 < \varphi_2$, then
\begin{equation}
\label{eqn:xconsistent}
x_{\varphi_1} = CH^L(\psi_{\varphi_2,\varphi_1})(x_{\varphi_2}).
\end{equation}

Given an embedding $\varphi:(W,\mu)\hookrightarrow (\op{int}(X),\lambda)$ in $\op{Emb}(X,\lambda)$, we define $x_\varphi\in CH^L(W,\mu)$ as follows. Observe that $\phi\circ\varphi\in\op{Emb}(X',\lambda')$. By Lemma~\ref{lem:directed}, we can find an embedding $\varphi':(W',\mu')\hookrightarrow (\op{int}(X'),\lambda')$ in $\op{Emb}(X',\lambda')$ with $\phi\circ\varphi < \varphi'$. This means that we have an exact symplectic embedding
\[
\psi_{\varphi',\phi\circ\varphi} : (W,\mu) \hookrightarrow (\op{int}(W'),\mu').
\]
We now define
\begin{equation}
\label{eqn:defxvarphi}
x_\varphi = CH^L(\psi_{\varphi',\phi\circ\varphi})(x_{\varphi'}') \in CH^L(W,\mu).
\end{equation}
To show that this gives a well defined element of $\mathring{CH}^L(X,\lambda)$, we need to check that (i) $x_\varphi$ does not depend on the choice of $\varphi'$, and (ii) the consistency condition \eqref{eqn:xconsistent} holds.

To prove (i), let $\varphi'_1:(W_1',\mu_1')\hookrightarrow (\op{int}(X'),\lambda')$ and $\varphi'_2:(W_2',\mu_2')\hookrightarrow (\op{int}(X'),\lambda')$ be two elements of $\op{Emb}(X',\lambda')$ with $\phi\circ\varphi < \varphi'_1,\varphi'_2$. By Lemma~\ref{lem:directed}, there exists an embedding $\varphi'_3:(W_3',\mu_3')\hookrightarrow (\op{int}(X'),\lambda')$ in $\op{Emb}(X',\lambda')$ with $\varphi'_1,\varphi'_2 < \varphi'_3$. We have a commutative diagram of embeddings
\[
\begin{CD}
W @>{\psi_{\varphi_1',\phi\circ\varphi}}>> W_1'\\
 @V{\psi_{\varphi_2',\phi\circ\varphi}}VV  @VV{\psi_{\varphi_3',\varphi_1'}}V\\
 W_2' @>{\psi_{\varphi_3',\varphi_2'}}>> W_3'.
\end{CD}
\]
Both compositions in the diagram agree with the exact symplectic embedding
\[
\psi_{\varphi_3',\phi\circ\varphi} : (W,\mu) \hookrightarrow (W_3',\mu_3').
\]
It follows from this and the functoriality of transfer morphisms that
\[
CH^L(\psi_{\varphi_1',\phi\circ\varphi})(x_{\varphi_1'}') = CH^L(\psi_{\varphi_2',\phi\circ\varphi})(x_{\varphi_2'}') = CH^L(\psi_{\varphi_3',\phi\circ\varphi})(x_{\varphi_3'}') \in CH^L(W,\mu).
\]

Assertion (ii) follows from a similar argument using the functoriality of transfer morphisms.

{\em Step 2.\/} We now prove that the maps $\mathring{CH}^L(\phi)$ satisfy property (a).  By Definition~\ref{def:mathringpersistence}, we need to show that for $\varphi$ and $\varphi'$ as in Step 1, the diagram
\[
\begin{CD}
CH^{L_1}(W',\mu') @>{CH^{L_1}(\psi_{\varphi',\phi\circ\varphi})}>> CH^{L_1}(W,\mu)\\
@V{CH^{L_2,L_1}(W',\mu')}VV @VV{CH^{L_2,L_1}(W,\mu)}V\\
CH^{L_2}(W',\mu') @>{CH^{L_2}(\psi_{\varphi',\phi\circ\varphi})}>> CH^{L_2}(W,\mu)
\end{CD}
\]
commutes. This holds because transfer morphisms commute with persistence maps \eqref{eqn:transferfiltration}.

{\em Step 3.\/} We now prove property (b).
Let
\[
x'=\left\{x'_{\varphi'}\;\big|\;\varphi'\in\op{Emb}(X,\lambda)\right\}\in \mathring{CH}^L(X,\lambda).
\]
Write
\[
\mathring{CH}^L(\op{id}_X)(x') = \left\{x_\varphi \;\big|\; \varphi\in\op{Emb}(X,\lambda)\right\} \in \mathring{CH}^L(X,\lambda).
\]
If $\varphi\in\op{Emb}(X,\lambda)$, then by Lemma~\ref{lem:directed} we can find $\varphi'\in\op{Emb}(X,\lambda)$ with $\varphi < \varphi'$, and by the definition \eqref{eqn:defxvarphi}, we have
\[
x_\varphi = CH^L(\psi_{\varphi',\varphi})(x'_{\varphi'}).
\]
This agrees with $x'_{\varphi}$ by the consistency property \eqref{eqn:x'consistent}.

{\em Step 4.\/} We now prove property (c). Let
\[
x''=\left\{x''_{\varphi''}\;\big|\;\varphi''\in\op{Emb}(X'',\lambda'')\right\}\in \mathring{CH}^L(X'',\lambda'').
\]
Write
\[
\begin{split}
\mathring{CH}^L(\phi')(x'') &= \left\{x'_{\varphi'}\;\big|\;\varphi'\in\op{Emb}(X',\lambda')\right\}\in \mathring{CH}^L(X',\lambda'),\\
\mathring{CH}^L(\phi)(\mathring{CH}^L(\phi')(x'')) &= \left\{x_{\varphi}\;\big|\;\varphi\in\op{Emb}(X,\lambda)\right\}\in \mathring{CH}^L(X,\lambda),\\
\mathring{CH}^L(\phi'\circ\phi)(x'') &= \left\{\widehat{x}_{\varphi}\;\big|\;\varphi\in\op{Emb}(X,\lambda)\right\}\in \mathring{CH}^L(X,\lambda).
\end{split}
\]
Let $\varphi:(W,\mu)\hookrightarrow (\op{int}(X),\lambda)$ be an embedding in $\op{Emb}(X,\lambda)$. We need to show that
\[
x_\varphi = \widehat{x}_\varphi \in CH^L(W,\mu).
\]
By Lemma~\ref{lem:directed}, we can find $\varphi':(W',\mu')\hookrightarrow (\op{int}(X'),\lambda')$ in $\op{Emb}(X',\lambda')$ with $\phi\circ\varphi < \varphi'$, and we can find $\varphi'':(W'',\mu'')\hookrightarrow (\op{int}(X''),\lambda'')$ in $\op{Emb}(X'',\lambda'')$ with $\phi'\circ\varphi' < \varphi''$. By the definition \eqref{eqn:defxvarphi}, we have
\begin{align}
\nonumber
x'_{\varphi'} &= CH^L(\psi_{\varphi'',\phi'\circ\varphi'})(x''_{\varphi''}) \in CH^L(W',\mu'),\\
\label{eqn:xcomp1}
x_\varphi &= CH^L(\psi_{\varphi',\phi\circ\varphi})(CH^L(\psi_{\varphi'',\phi'\circ\varphi'})(x''_{\varphi''})) \in CH^L(W,\mu).
\end{align}
On the other hand, the embedding $\phi'\circ\phi\circ\varphi: (W,\mu) \hookrightarrow (\op{int}(X''),\lambda'')$ is an element of $\op{Emb}(X'',\lambda'')$ with $\phi'\circ\phi\circ\varphi < \varphi''$. So by the definition \eqref{eqn:defxvarphi} again, we have
\begin{equation}
\label{eqn:xcomp2}
\widehat{x}_\varphi = CH^L(\psi_{\varphi'',\phi'\circ\phi\circ\varphi})(x''_{\varphi''}) \in CH^L(W,\mu).
\end{equation}
Finally, it follows from equation \eqref{eqn:defpsi} that
\[
\psi_{\varphi'',\phi'\circ\varphi'} \circ \psi_{\varphi',\phi\circ\varphi} = \psi_{\varphi'',\phi'\circ\phi\circ\varphi} : W \longrightarrow W''.
\]
Then by the functoriality of transfer morphisms,
\[
CH^L(\psi_{\varphi',\phi\circ\varphi}) \circ CH^L(\psi_{\varphi'',\phi'\circ\varphi'}) = CH^L(\psi_{\varphi'',\phi'\circ\phi\circ\varphi}) : CH^L(W'',\mu'') \longrightarrow CH^L(W,\mu).
\]
It follows from this that \eqref{eqn:xcomp1} and \eqref{eqn:xcomp2} agree.
\end{proof}

\begin{example}
Suppose that $(X,\lambda)$ and $(X',\lambda')$ are nondegenerate Liouville domains, $\varphi:(X,\lambda)\hookrightarrow (\op{int}(X'),\lambda')$ is an exact symplectic embedding, and $\phi=\varphi|_{\op{int}(X)}$. Then $\mathring{CH}^L(\phi)$ agrees with the usual transfer morphism. That is, the diagram
\[
\begin{CD}
\mathring{CH}^L(X',\lambda') @>{\mathring{CH}^L(\phi)}>> \mathring{CH}^L(X,\lambda)\\
@V{\rho^L}V{\simeq}V @V{\rho^L}V{\simeq}V \\
CH^L(X',\lambda') @>{CH^L(\varphi)}>> CH^L(X,\lambda)
\end{CD}
\]
commutes. This follows from the definitions together with the functoriality of transfer morphisms.
\end{example}

\begin{remark}
One can use a similar inverse limit construction to define filtered equivariant symplectic homology (and prove its invariance under exact symplectomorphisms) for open subsets of Liouville domains with nonsmooth boundary (e.g.\ subsets of $\R^{2n}$ with piecewise smooth boundary), provided that they are exhausted by nondegenerate Liouville domains. One can also repeat this construction using other versions of filtered contact homology, such as filtered embedded contact homology in the four-dimensional case \cite{bn}.
\end{remark}



\section{Invariance of the zeta function in the nondegenerate case}
\label{sec:algebraic}

We now prove Theorem~\ref{thm:nondegenerate}.

Let $(X,\lambda)$ be a nondegenerate Liouville domain. By Proposition~\ref{prop:CH}, there is an associated persistence module $CH(X,\lambda)$. As explained in \S\ref{sec:zetapm}, every persistence module has an associated zeta function. We can then make the following definition:

\begin{definition}
If $(X,\lambda)$ is a nondegenerate Liouville domain, define
\[
\zeta_{CH}(X,\lambda) = \zeta(CH(X,\lambda)) \in \Lambda.
\]
\end{definition}

If $(X',\lambda')$ is another nondegenerate Liouville domain, and if there exists an exact symplectomorphism $(\op{int}(X),d\lambda) \stackrel{\simeq}{\to} (\op{int}(X'),d\lambda')$, then by Corollary~\ref{cor:openinvariance}, we have an isomorphism of persistence modules $CH(X,\lambda) \simeq CH(X',\lambda')$. The zeta function of a persistence module is by definition invariant under isomorphism of persistence modules, so we obtain
\begin{equation}
\label{eqn:zetachinv}
\zeta_{CH}(X,\lambda) = \zeta_{CH}(X',\lambda').
\end{equation}

This does not yet prove Theorem~\ref{thm:nondegenerate}, because the zeta function $\zeta_{CH}$ defined above is not the zeta function that appears in Theorem~\ref{thm:nondegenerate}, which was defined in Definition~\ref{def:zetanondegenerate}. However we can obtain the latter from the former by an algebraic transformation. (Some related algebra appears in \cite{ionelparker}.)

Recall that the M\"obius function $\mu:\Z^{>0}\to\{-1,0,1\}$ is defined as follows: If $n$ is divisible by the square of a prime number then $\mu(n)=0$. Otherwise, $\mu(n)$ is $-1$ to the number of prime factors of $n$.

\begin{definition}
\label{def:Thetadefinition}
Define a function
\[
\Theta: \Lambda \longrightarrow 1 + \Lambda^+
\]
as follows. If $\alpha\in\Lambda$, regarded as a function $\R\to\Z$, then
\begin{equation}
\label{eqn:Thetaproduct}
\Theta(\alpha) = \prod_{s\in\R}\prod_{n=1}^\infty \left(1-t^{ne^s}\right)^{-\alpha(s)\mu(n)}.
\end{equation}
\end{definition}

\begin{lemma}
\label{lem:Thetadefined}
The function $\Theta$ is well-defined.
\end{lemma}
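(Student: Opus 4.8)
The claim is that the infinite product \eqref{eqn:Thetaproduct} defines an element of $\Lambda$, in fact of $1+\Lambda^\times$. The plan is to show that for every $L\in\R$ only finitely many factors of \eqref{eqn:Thetaproduct} contribute a term $t^u$ with $u<L$, so that the product can be evaluated degree by degree and lands in the Novikov ring. First I would recall from \eqref{eqn:novikovinverse} that for any real number $v>0$ we have
\[
\left(1-t^v\right)^{-m} = \sum_{k=0}^\infty \binom{m+k-1}{k} t^{kv} \in 1+\Lambda^+
\]
for $m\in\Z^{>0}$, and that $\left(1-t^v\right)^{m}\in 1+\Lambda^+$ for $m\in\Z^{<0}$ as well; in both cases the lowest-order term beyond $1$ is in degree $v$. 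Thus each individual factor $\left(1-t^{ne^s}\right)^{-a(s)\mu(n)}$ with $a(s)\mu(n)\neq 0$ lies in $1+\Lambda^+$ and its nonconstant terms are supported in $[ne^s,\infty)$.

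The key estimate is a lower bound on the exponents $ne^s$ appearing among factors with $a(s)\mu(n)\neq 0$. Fix $L\in\R$. Since $a\in\Lambda$, the support of $a$ inside $(-\infty,\log L)$ is finite; call it $S_L=\{s_1,\dots,s_r\}$. A factor indexed by $(s,n)$ can contribute a term in degree less than $L$ only if $ne^s<L$, i.e.\ only if $e^s<L$, forcing $s\in S_L$ (using $s<\log L$), and then only if $n<Le^{-s}$, which bounds $n$ by a finite quantity depending on $s$. Hence only finitely many pairs $(s,n)$ index a factor with any term of degree $<L$. For each such factor, expand it as a power series as above; the product of these finitely many power series, truncated in degrees $<L$, is a well-defined element of $\Lambda$ truncated below $L$, because each is an element of $1+\Lambda^+$ and a finite product of such has finitely many terms below any given degree (the number of terms below $L$ in a product $\prod_{i=1}^N(1+b_i)$ with $b_i\in\Lambda^+$ is controlled since each $b_i$ has a positive minimal degree). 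Letting $L\to\infty$ gives a well-defined formal sum $\Theta(a)$ whose restriction to $(-\infty,L)$ is finitely supported for every $L$, i.e.\ $\Theta(a)\in\Lambda$; and since the constant term is $1$ and all other contributions are in $\Lambda^+$, we get $\Theta(a)\in 1+\Lambda^+\subset 1+\Lambda^\times=\Lambda^\times$, as the excerpt notes that units of $\Lambda$ are exactly the elements with leading coefficient $\pm 1$.

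I expect the main (though mild) obstacle to be bookkeeping the interaction of two separate sources of infinitude: the sum over $s\in\R$ and, for fixed $s$, the product over $n=1$ to $\infty$. The square-free condition built into $\mu$ is not needed for well-definedness, but it does mean that infinitely many $n$ are skipped; the essential point is simply that for fixed $s>0$ (equivalently $e^s$ bounded away from $0$) the exponents $ne^s$ tend to infinity, so only finitely many $n$ matter below any level, and that $a$ being in $\Lambda$ kills all but finitely many relevant $s$. Once these two finiteness observations are in place, the verification that the truncated products stabilize and assemble into an element of $1+\Lambda^\times$ is routine, following the same pattern already used to justify that \eqref{eqn:defzeta} and the product formula \eqref{eqn:productformula} make sense.
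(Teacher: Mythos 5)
Your argument is correct and is essentially the paper's own proof: you expand each factor with $a(s)\mu(n)\neq 0$ via \eqref{eqn:novikovinverse} and use the same two finiteness facts (since $a\in\Lambda$, only finitely many $s$ in the support of $a$ have $e^s<L$, and for each such $s$ only finitely many $n$ satisfy $ne^s<L$) to conclude that only finitely many factors contribute monomials below any level $L$, which the paper phrases via a uniform lower bound $L_0$ on the exponents $ne^s$. The only blemish is the parenthetical ``$1+\Lambda^\times=\Lambda^\times$,'' which is not literally true but is irrelevant to well-definedness.
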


\begin{proof}
Whenever $\alpha(s)\mu(n)>0$, we expand the corresponding factor in the product \eqref{eqn:Thetaproduct} using equation \eqref{eqn:novikovinverse}. Since $\alpha\in\Lambda$, the function $\alpha$ is bounded from below, i.e.\ there exists $L_0>0$ such that $\alpha(s)=0$ whenever $e^s<L_0$. Then each factor in \eqref{eqn:Thetaproduct} has the form
\[
1 + c_1t^\sigma + c_2t^{2\sigma} + \cdots
\]
where $c_1,c_2,\ldots\in\Z$ with $c_1\neq 0$ and $\sigma \ge L_0$. For any $L>0$, there are only finitely many such factors with $\sigma < L$. It follows that the product \eqref{eqn:Thetaproduct} has only finitely many monomials $ct^\sigma$ with $c\neq 0$ and $\sigma < L$.
\end{proof}

\begin{remark}
\label{rem:multiplicative}
It follows from the definition that $\Theta$ is multiplicative: if $\alpha,\beta\in\Lambda$ then
\[
\Theta(\alpha+\beta)=\Theta(\alpha)\Theta(\beta).
\]
\end{remark}

\begin{lemma}
\label{lem:algebraic}
If $(X,\lambda)$ is a nondegenerate Liouville domain, then
\[
\Theta(\zeta_{CH}(X,\lambda)) = \zeta(X,\lambda).
\]
\end{lemma}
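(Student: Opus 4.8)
The plan is to unwind both sides of the claimed identity as explicit products over Reeb orbits and check that they match term by term. We have two formulas available. On the dynamical side, the product formula of Lemma~\ref{lem:productformula} expresses
$\zeta(X,\lambda) = \zeta(\partial X,\lambda|_{\partial X})$ as a product over \emph{simple} Reeb orbits $\gamma$ with exponents built from $\epsilon(\gamma)$ and $\epsilon(\gamma^2)$. On the persistence-module side, by the Reeb Orbits property in Proposition~\ref{prop:CH}, the Euler characteristic jump $\Delta_L(CH(X,\lambda)) = \#\mathcal{P}^{e^L}_{\op{good}}(\partial X,\lambda)$ is the signed count of \emph{good} (not necessarily simple) Reeb orbits of action $e^L$, so by Definition~\ref{def:zetapm},
\[
\zeta_{CH}(X,\lambda) = \sum_{L\in\R}\#\mathcal{P}^{e^L}_{\op{good}}(\partial X,\lambda)\,t^L
= \sum_{\gamma\in\mathcal{P}_{\op{good}}(\partial X,\lambda)}(-1)^{\epsilon(\gamma)}t^{\log\mathcal{A}(\gamma)}.
\]
So the task is purely algebraic: show that applying $\Theta$ to this last sum reproduces the product in \eqref{eqn:productformula}.

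First I would organize the sum defining $\zeta_{CH}$ by grouping each good Reeb orbit $\gamma$ as the $n$-fold cover $\overline\gamma^{\,n}$ of a simple orbit $\overline\gamma$, so that $\mathcal{A}(\gamma) = n\mathcal{A}(\overline\gamma)$ and, writing $s = \log\mathcal{A}(\overline\gamma)$, the contribution of $\gamma$ is $(-1)^{\epsilon(\overline\gamma^{\,n})}t^{\log(n e^s)}$. Thus if I let $a = \zeta_{CH}(X,\lambda)$, then for each $s$ in the (discrete, bounded-below) set $\{\log\mathcal{A}(\overline\gamma)\}$ the coefficient $a(s)$ decomposes as a sum over simple orbits $\overline\gamma$ of action $e^s$ of certain integers; and by Definition~\ref{def:Thetadefinition},
\[
\Theta(a) = \prod_{\overline\gamma\in\mathcal{P}_{\op{simp}}}\ \prod_{n=1}^\infty\left(1-t^{n\mathcal{A}(\overline\gamma)}\right)^{-\mu(n)\,b_n(\overline\gamma)},
\]
where $b_n(\overline\gamma) = (-1)^{\epsilon(\overline\gamma^{\,n})}$ if $\overline\gamma^{\,n}$ is good and $b_n(\overline\gamma)=0$ if it is bad. (Here I am using that $\Theta$ is multiplicative enough that a sum of exponents in $a$ turns into a product of factors — more precisely $\Theta$ converts $+$ on $\Lambda$ to $\times$ on $1+\Lambda^\times$ when the supports interact finitely, which holds by the well-definedness argument of Lemma~\ref{lem:Thetadefined}.) So it suffices to fix one simple orbit $\overline\gamma$ and prove the local identity
\[
\prod_{n=1}^\infty\left(1-t^{n\mathcal{A}(\overline\gamma)}\right)^{-\mu(n)\,b_n(\overline\gamma)}
= \left(1-(-1)^{\epsilon(\overline\gamma)+\epsilon(\overline\gamma^{\,2})}t^{\mathcal{A}(\overline\gamma)}\right)^{-(-1)^{\epsilon(\overline\gamma^{\,2})}}.
\]

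The main obstacle is this last local identity, and it is really an exercise in combining the behavior of Lefschetz signs under covers with Möbius inversion. The key structural fact is that $\epsilon(\overline\gamma^{\,n})$ (equivalently whether $\overline\gamma^{\,n}$ is good) depends on $n$ only through a small amount of data: letting $\epsilon = \epsilon(\overline\gamma)$ and $\epsilon' = \epsilon(\overline\gamma^{\,2})$, one checks from linear algebra of the return map that $\epsilon(\overline\gamma^{\,n}) = \epsilon$ for $n$ odd, $\epsilon(\overline\gamma^{\,n}) = \epsilon'$ for $n$ even, and that $\overline\gamma^{\,n}$ is bad precisely when $n$ is even and $\epsilon\neq\epsilon'$. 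I would split into the three cases — ($\epsilon=\epsilon'$, i.e.\ all covers good with constant sign), ($\epsilon\neq\epsilon'$ with $\epsilon'=0$), and ($\epsilon\neq\epsilon'$ with $\epsilon'=1$) — and in each case compute $\sum_{n}\mu(n)b_n(\overline\gamma)\log(1-x^n)$ formally, using $\sum_{d\mid m}\mu(d) = [m=1]$; the sum telescopes via Möbius inversion to a single $\log(1-x)$ term (possibly with $x\mapsto -x$), giving the right-hand side. The factor $(-1)^{\epsilon(\overline\gamma^{\,2})}$ appearing as an outer exponent in \eqref{eqn:productformula}, and the sign $(-1)^{\epsilon(\overline\gamma)+\epsilon(\overline\gamma^{\,2})}$ inside, are exactly what drop out of this bookkeeping. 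Once the three cases are verified, reassembling the product over all simple orbits recovers \eqref{eqn:productformula}, hence equals $\zeta(X,\lambda)$, completing the proof. I expect the computation of the three cases to take some care with signs but to be otherwise routine; the one genuinely delicate point is justifying the interchange of the (infinite) product over $\overline\gamma$ with the (infinite) product over $n$, which is handled exactly as in Lemma~\ref{lem:Thetadefined} since below any action bound only finitely many pairs $(\overline\gamma,n)$ contribute.
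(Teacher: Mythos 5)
Your setup is correct up to the point where you apply $\Theta$: the identification $\zeta_{CH}(X,\lambda)=\sum_{\gamma\,\op{good}}(-1)^{\epsilon(\gamma)}t^{\log\mathcal{A}(\gamma)}$, the reduction to a per-simple-orbit identity, the parity behavior of $\epsilon(\gamma^d)$ under iteration, and the M\"obius-inversion idea are all the paper's strategy. The gap is in your expansion of $\Theta(\zeta_{CH})$. A good orbit $\overline\gamma^{\,d}$ contributes to the coefficient of $\zeta_{CH}$ at $s=\log\bigl(d\,\mathcal{A}(\overline\gamma)\bigr)$, and then Definition~\ref{def:Thetadefinition} introduces a \emph{separate} index $n$, producing factors $\bigl(1-t^{nd\,\mathcal{A}(\overline\gamma)}\bigr)^{-\mu(n)(-1)^{\epsilon(\overline\gamma^{\,d})}}$. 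The correct expansion is therefore a triple product, over simple orbits $\overline\gamma$, over multiplicities $d$ with $\overline\gamma^{\,d}$ good, and over the M\"obius index $n$. Your formula collapses $d$ and $n$ into one index: you write $\prod_{\overline\gamma}\prod_{n}\bigl(1-t^{n\mathcal{A}(\overline\gamma)}\bigr)^{-\mu(n)b_n(\overline\gamma)}$ with $b_n(\overline\gamma)$ read off from the $n$-fold cover, which is not what \eqref{eqn:Thetaproduct} gives. (Relatedly, the coefficient of $\zeta_{CH}$ at $s=\log\mathcal{A}(\overline\gamma)$ is a signed count of \emph{all} good orbits of action $e^{s}$, not of the covers of $\overline\gamma$, whose actions are strictly larger.) As a consequence, the ``local identity'' you propose to verify is false: in the case where $\overline\gamma$ and all its covers are good with $\epsilon\equiv 0$, it asserts $\prod_{n\ge1}(1-z^n)^{-\mu(n)}=(1-z)^{-1}$, but $(1-z)^{-1}(1-z^2)(1-z^3)\cdots=1+z+0\cdot z^2+\cdots$, so it already fails at order $z^2$. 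Hence the three-case computation you outline cannot go through as stated.

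The repair is to keep both indices, which is exactly what the paper does: for each simple orbit $\gamma$, with $z=t^{\mathcal{A}(\gamma)}$, one must show that $\prod_{d:\ \gamma^d\ \op{good}}\prod_{n\ge1}\bigl(1-z^{nd}\bigr)^{-\mu(n)(-1)^{\epsilon(\gamma^d)}}$ equals the corresponding factor of \eqref{eqn:productformula}, as in \eqref{eqn:algebraic}. In the case where all covers are good with constant sign one groups $k=nd$ and uses $\sum_{n\mid k}\mu(n)=0$ for $k>1$ to get $(1-z)^{\mp1}$ as in \eqref{eqn:algebraic1}; in the case where $\gamma^d$ is good only for $d$ odd, one divides that identity by itself with $z$ replaced by $z^2$, using $1+z=(1-z)^{-1}(1-z^2)$. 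So your M\"obius inversion is the right tool, but it must be applied to the double product over $d$ and $n$, not to the collapsed single product; the finiteness/convergence point you raise is indeed handled as in Lemma~\ref{lem:Thetadefined} and is not the issue.
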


As a warmup to proving this, we first give:

\begin{proof}[Proof of Lemma~\ref{lem:productformula}]
Since every Reeb orbit is a cover of a simple Reeb orbit, we can rewrite the sum in \eqref{eqn:defzeta} as a sum over simple Reeb orbits to get
\[
\zeta(Y,\lambda) = \exp \sum_{\gamma\in\mathcal{P}_{\op{simp}}(Y,\lambda)}\sum_{d=1}^\infty \frac{(-1)^{\epsilon(\gamma^d)}}{d}t^{d\mathcal{A}(\gamma)}.
\]
Then to prove the lemma, it is enough to show that if $\gamma\in\mathcal{P}_{\op{simp}}(Y,\lambda)$ is a simple Reeb orbit, then
\[
\exp \sum_{d=1}^\infty \frac{(-1)^{\epsilon(\gamma^d)}}{d}t^{d\mathcal{A}(\gamma)} = \left(1-(-1)^{\epsilon(\gamma)+\epsilon(\gamma^2)} t^{\mathcal{A}(\gamma)}\right)^{-(-1)^{\epsilon(\gamma^2)}}.
\]
Taking the formal logarithm of both sides, this is equivalent to
\[
\sum_{d=1}^\infty \frac{(-1)^{\epsilon(\gamma^d)}}{d}t^{d\mathcal{A}(\gamma)} = (-1)^{\epsilon(\gamma^2)}\sum_{d=1}^\infty \frac{\left((-1)^{\epsilon(\gamma)+\epsilon(\gamma^2)} t^{\mathcal{A}(\gamma)}\right)^d}{d}.
\]
Thus it is enough to show that for each positive integer $d$ we have
\[
(-1)^{\epsilon(\gamma^d)} = (-1)^{d\epsilon(\gamma)+(d+1)\epsilon(\gamma^2)}.
\]
Equivalently,
\begin{equation}
\label{eqn:iterateparity}
\epsilon(\gamma^d) = \left\{\begin{array}{cl} \epsilon(\gamma), & \mbox{$d$ odd},\\
\epsilon(\gamma^2), & \mbox{$d$ even}.
\end{array}\right.
\end{equation}
This holds because if $d$ is odd then $\epsilon(\gamma^d)$ is the parity of the number of eigenvalues of the linearized return map $P_\gamma$ in the interval $(0,1)$, while if $d$ is even then $\epsilon(\gamma^d)$ is the parity of the number of eigenvalues of $P_\gamma$ in the interval $(-1,1)$.
\end{proof}

\begin{proof}[Proof of Lemma~\ref{lem:algebraic}.]
By equations \eqref{eqn:zetapm}, \eqref{eqn:Pgood}, and \eqref{eqn:ecj}, we have
\[
\zeta_{CH}(X,\lambda) = \sum_{\gamma\in\mathcal{P}_{\op{good}}(\partial X,\lambda)} (-1)^{\epsilon(\gamma)}t^{\log(\mathcal{A}(\gamma))}.
\]
We can write this as a sum over simple Reeb orbits as follows:
\[
\zeta_{CH}(X,\lambda) = \sum_{\gamma\in\mathcal{P}_{\op{simp}}(\partial X,\lambda)} \sum_{\substack{\mbox{\scriptsize $d\ge 1$}\\ \mbox{\scriptsize $\gamma^d$ good}}}
(-1)^{\epsilon(\gamma^d)}t^{\log(d\mathcal{A}(\gamma))}.
\]
It follows from equation \eqref{eqn:Thetaproduct} and Remark~\ref{rem:multiplicative} that
\[
\begin{split}
\Theta(\zeta_{CH}(X,\lambda))
&=
\prod_{\gamma\in\mathcal{P}_{\op{simp}}(\partial X,\lambda)}
\prod_{\substack{\mbox{\scriptsize $d\ge 1$}\\ \mbox{\scriptsize $\gamma^d$ good}}}
\prod_{n=1}^\infty
\left(
1-\left( t^{\mathcal{A}(\gamma)} \right)^{nd}
\right)^{-\mu(n)(-1)^{\epsilon(\gamma^d)}}.
\end{split}
\]
By the product formula \eqref{eqn:productformula} for the zeta function, it is enough to show that if $\gamma$ is a simple Reeb orbit, then writing $z=t^{\mathcal{A}(\gamma)}$, we have
\begin{equation}
\label{eqn:algebraic}
\left(1-(-1)^{\epsilon(\gamma)+\epsilon(\gamma^2)} z\right)^{-(-1)^{\epsilon(\gamma^2)}}
=
\prod_{\substack{\mbox{\scriptsize $d\ge 1$}\\ \mbox{\scriptsize $\gamma^d$ good}}}
\prod_{n=1}^\infty
\left(
1 - z^{nd}
\right)^{-\mu(n)(-1)^{\epsilon(\gamma^d)}}.
\end{equation}

To prove equation \eqref{eqn:algebraic}, there are four cases, depending on the parities $\epsilon(\gamma),\epsilon(\gamma^2)\in\Z/2$.

{\em Case 1:\/} $\epsilon(\gamma)=\epsilon(\gamma^2)=0$.

In this case we have $\epsilon(\gamma^d)=0$ for all $d\ge 1$ by equation \eqref{eqn:iterateparity}. In particular, $\gamma^d$ is good for all $d$ by Definition~\ref{def:good}. Thus equation \eqref{eqn:algebraic} becomes
\begin{equation}
\label{eqn:algebraic1}
(1-z)^{-1} = \prod_{d\ge 1}\prod_{n=1}^\infty (1-z^{nd})^{-\mu(n)}.
\end{equation}
To prove this, we can rewrite the right hand side as
\[
\prod_{k\ge 1}(1-z^k)^{-\sum_{n|k}\mu(n)}.
\]
Equation \eqref{eqn:algebraic1} then reduces to the identity
\[
\sum_{n|k}\mu(n) = \left\{\begin{array}{cl} 1, & k=1,\\ 0, & k>1, \end{array}\right.
\]
which follows from the M\"obius inversion formula or can be checked directly.

{\em Case 2:\/} $\epsilon(\gamma)=\epsilon(\gamma^2)=1$.

In this case, equation \eqref{eqn:algebraic} reduces to the same identity as \eqref{eqn:algebraic1}, except that both sides are inverted.

{\em Case 3:\/} $\epsilon(\gamma)=0$ and $\epsilon(\gamma^2)=1$.

In this case, by equation \eqref{eqn:iterateparity}, $\epsilon(\gamma^d)$ alternates, so $\gamma^d$ is good if and only if $d$ is odd. Thus equation \eqref{eqn:algebraic} becomes
\begin{equation}
\label{eqn:algebraic3}
1 + z = \prod_{\mbox{\scriptsize $d$ odd}}
\prod_{n=1}^\infty
(1-z^{nd})^{-\mu(n)}.
\end{equation}
Since
\[
1 + z = (1-z)^{-1}(1-z^2),
\]
equation \eqref{eqn:algebraic3} follows by starting with equation \eqref{eqn:algebraic1}, and dividing by equation \eqref{eqn:algebraic1} with $z$ replaced by $z^2$.

{\em Case 4:\/} $\epsilon(\gamma)=1$ and $\epsilon(\gamma^2)=0$.

In this case, equation \eqref{eqn:algebraic} reduces to the same identity as \eqref{eqn:algebraic3}, except that both sides are inverted.
\end{proof}

By equation \eqref{eqn:zetachinv} and Lemma~\ref{lem:algebraic}, the proof of Theorem~\ref{thm:nondegenerate} is complete.

\begin{remark}
In Lemma~\ref{lem:algebraic}, we do not lose any information in passing from $\zeta_{CH}(X,\lambda)$ to $\zeta(X,\lambda)$, because the function $\Theta$ is injective. To prove this injectivity, by the multiplicativity in Remark~\ref{rem:multiplicative} it is enough to show that if $\alpha\in\Lambda\setminus\{0\}$ then $\Theta(\alpha)\neq 1$. This holds because if $\alpha$ has leading term $ct^\sigma$, then $\Theta(\alpha)-1$ has leading term $ct^{e^\sigma}$.
\end{remark}



\section{The degenerate case}
\label{sec:degeneratecase}

We now develop the properties of $\mathring{CH}^L(X,\lambda)$ and $\mathring{CH}^{L_2,L_1}(X,\lambda)$ for an arbitrary Liouville domain $(X,\lambda)$, which might be degenerate. We will then define the ``dynamically tame'' condition and prove Theorem~\ref{thm:tame}.

\subsection{Approximation}

To start, we now discuss how to compute $\mathring{CH}^L(X,\lambda)$ for almost every $L$ by approximating $(X,\lambda)$ by a nondegenerate Liouville domain.

Let $V$ be the Liouville vector field on $X$, see Example~\ref{ex:scaling}. The flow of $V$ for negative time defines a smooth embedding
\[
\phi_V:(-\infty,0]\times \partial X \longrightarrow X
\]
sending $(0,y)\mapsto y$ when $y\in\partial X$. Note that if $\delta > 0$, then in the notation of \S\ref{sec:openinvariance}, we have
\[
X_\delta = X \setminus \phi_V((-\delta,0]\times\partial X).
\]

\begin{definition}
\label{def:Lapprox}
Let $(X,\lambda)$ be a Liouville domain and let $L\in\R$. Suppose that
\begin{equation}
\label{eqn:sigma}
\exists \sigma > 0 : 
\left(e^L,e^{L+\sigma}\right]\cap \op{Spec}(\partial X,\lambda) = \emptyset.
\end{equation}
Let
\begin{equation}
\label{eqn:delta_L}
\delta_L = \sup\left\{\sigma>0\;\bigg|\; 
\left(e^L,e^{L+2\sigma}\right] \cap \op{Spec}(\partial X,\lambda) = \emptyset\right\} \in (0,\infty].
\end{equation}
An {\bf $L$-approximation\/} to $(X,\lambda)$ is a closed subset $\widehat{X}\subset \op{int}(X)$ such that:
\begin{itemize}
\item $X_{\delta_L}\subset \op{int}(\widehat{X})$.
\item $\partial\widehat{X}$ is $\phi_V$ of the graph of a smooth function $\partial X \to (-\delta_L,0)$. In particular, $(\widehat{X},\lambda)$ is a Liouville domain.
\item The Liouville domain $(\widehat{X},\lambda)$ is nondegenerate.
\item $\left(e^L,e^{L+\delta_L}\right]\cap\op{Spec}(\partial \widehat{X},\lambda) = \emptyset$.
\end{itemize}
\end{definition}

\begin{remark}
Condition \eqref{eqn:sigma} holds for example if $e^L\notin\op{Spec}(\partial X,\lambda)$, since $\op{Spec}(\partial X,\lambda)$ is a closed subset of $\R$. In particular, since $\op{Spec}(\partial X,\lambda)$ has measure zero, it follows that condition \eqref{eqn:sigma} holds for almost every $L$.
\end{remark}

\begin{lemma}
\label{lem:Lapproxexist}
Let $(X,\lambda)$ be a Liouville domain, let $L\in\R$, and assume that condition \eqref{eqn:sigma} holds. Then $L$-approximations to $(X,\lambda)$ exist and exhaust $\op{int}(X)$.
\end{lemma}

\begin{proof}
Choose $\delta\in(0,\delta_L)$. Let $f:\partial X\to(-\delta_L,0)$ be a smooth function which is a perturbation of the constant function $-\delta$, and let
\[
\widehat{X} = X \setminus \{\phi_V(s,y)\mid f(y)<s\le 0\}.
\]
Then $\partial\widehat{X}$ is naturally identified with $\partial X$ so that the contact form $\lambda|_{\partial\widehat{X}}$ is identified with $e^{f}\lambda|_{\partial X}$. By a standard transversality argument, this contact form is nondegenerate for generic $f$. Moreover, if $f$ is a sufficiently small perturbation of the constant function $-\delta$, then this contact form has no Reeb orbits with action in the interval $(e^L,e^{L+\delta_L}]$. Otherwise, a compactness argument would show that $e^{-\delta}\lambda|_{\partial X}$ has a Reeb orbit with action in the interval $[e^L,e^{L+\delta_L}]$, so $\lambda|_{\partial X}$ has a Reeb orbit with action in the interval $[e^{L+\delta},e^{L+\delta+\delta_L}]$, contradicting \eqref{eqn:delta_L}.

This proves that $L$-approximations exist. Since $\delta$ in the previous paragraph can be chosen arbitrarily small, $L$-approximations exhaust $\op{int}(X)$.
\end{proof}

If $\widehat{X}$ is an $L$-approximation to $(X,\lambda)$, then from the definition of the inverse limit, there is a projection map
\begin{equation}
\label{eqn:projectionmap}
\pi_{\widehat{X}}^L : \mathring{CH}^L(X,\lambda) \longrightarrow CH^L(\widehat{X},\lambda).
\end{equation}

\begin{proposition}
\label{prop:approx}
Let $(X,\lambda)$ be a Liouville domain, let $L\in\R$, and suppose that condition \eqref{eqn:sigma} holds. Let $\widehat{X}$ be an $L$-approximation to $(X,\lambda)$. Then:
\begin{description}
\item{(a)}
The projection map \eqref{eqn:projectionmap} is an isomorphism. In particular, $\mathring{CH}^L(X,\lambda)$ is finite dimensional.
\item{(b)}
Suppose that $L_1\le L_2$ and that condition \eqref{eqn:sigma} holds for $L=L_1$ and $L=L_2$. Suppose that $\widehat{X}$ is both an $L_1$-approximation and an $L_2$-approximation to $(X,\lambda)$. Then the diagram
\[
\begin{CD}
\mathring{CH}^{L_1}(X,\lambda) @>{\pi_{\widehat{X}}^{L_1}}>{\simeq}> CH^{L_1}(\widehat{X},\lambda)\\
@V{\mathring{CH}^{L_2,L_1}(X,\lambda)}VV @VV{CH^{L_2,L_1}(\widehat{X},\lambda)}V\\
\mathring{CH}^{L_2}(X,\lambda) @>{\pi_{\widehat{X}}^{L_2}}>{\simeq}> CH^{L_2}(\widehat{X},\lambda)
\end{CD}
\]
commutes.
\end{description}
\end{proposition}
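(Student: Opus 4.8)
The plan is to establish (a) first, and then read off (b) directly from the definitions.

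\medskip

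For (a), begin by noting that the inclusion $\iota_{\widehat X}\colon(\widehat X,\lambda)\hookrightarrow(\op{int}(X),\lambda)$ is an element of $\op{Emb}(X,\lambda)$ (it is an exact symplectic embedding of the nondegenerate Liouville domain $(\widehat X,\lambda)$ with image in $\op{int}(X)$), and that the projection \eqref{eqn:projectionmap} is by construction the component of the inverse limit at $\iota_{\widehat X}$. Since $L$-approximations to $(X,\lambda)$ exist and exhaust $\op{int}(X)$, I would choose a sequence of $L$-approximations
\[
\widehat X=\widehat X_1\subset\op{int}(\widehat X_2)\subset\op{int}(\widehat X_3)\subset\cdots,\qquad \bigcup_n\widehat X_n=\op{int}(X).
\]
The corresponding inclusions form a chain in $(\op{Emb}(X,\lambda),<)$ which is cofinal: any $\varphi\in\op{Emb}(X,\lambda)$ has compact image, so $\varphi(W)\subset\op{int}(\widehat X_n)$ for all large $n$. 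Hence $\mathring{CH}^L(X,\lambda)=\varprojlim_n CH^L(\widehat X_n,\lambda)$, where the structure maps are the transfer morphisms $T_n\colon CH^L(\widehat X_{n+1},\lambda)\to CH^L(\widehat X_n,\lambda)$ of the inclusions $\widehat X_n\hookrightarrow\op{int}(\widehat X_{n+1})$, and under this identification $\pi^L_{\widehat X}$ becomes the projection onto the first term $CH^L(\widehat X_1,\lambda)$. An inverse limit of a tower of isomorphisms is canonically the first term, via the first projection; so (a) reduces to the following claim, which is the step I expect to be the main obstacle.

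\medskip

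\emph{Claim: if $\widehat X_1\subset\op{int}(\widehat X_2)$ are both $L$-approximations to $(X,\lambda)$, then the transfer morphism $T\colon CH^L(\widehat X_2,\lambda)\to CH^L(\widehat X_1,\lambda)$ of the inclusion is an isomorphism.}
To prove the claim I would adapt the scaling argument from the proof of Proposition~\ref{prop:rhoL}. Write $\partial\widehat X_i=\phi_V(\op{graph}(g_i))$ with smooth functions $g_i\colon\partial X\to(-\delta,0)$ and $g_1<g_2$; compactness of $\partial X$ forces $0<\min_{\partial X}(g_2-g_1)$ and $\max_{\partial X}(g_2-g_1)<\delta$, so one can pick $s$ with $\max_{\partial X}(g_2-g_1)<s<\delta$ and then $s'$ with $\max\{0,\,s-\min_{\partial X}(g_2-g_1)\}<s'<s$. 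For these choices one gets inclusions of nondegenerate Liouville domains
\[
\Phi_{\widehat X_1}^{s'}(\widehat X_1)\subset\op{int}\bigl(\Phi_{\widehat X_2}^{s}(\widehat X_2)\bigr),\qquad \Phi_{\widehat X_2}^{s}(\widehat X_2)\subset\op{int}(\widehat X_1)\subset\op{int}(\widehat X_2),
\]
where $\Phi_{\widehat X_2}^{s}(\widehat X_2)$ carries the form $e^{-s}\lambda$ and $\Phi_{\widehat X_1}^{s'}(\widehat X_1)$ carries $e^{-s'}\lambda$ (the first two containments are precisely what the inequalities on $s,s'$ guarantee). Let $u\colon CH^L(\widehat X_1,\lambda)\to CH^L(\widehat X_2,e^{-s}\lambda)$ and $v\colon CH^L(\widehat X_2,e^{-s}\lambda)\to CH^L(\widehat X_1,e^{-s'}\lambda)$ be the transfer morphisms of the corresponding inclusions. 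By the functoriality \eqref{eqn:transferfunctor}, $u\circ T=CH^L(\Phi_{\widehat X_2}^{s})$ and $v\circ u=CH^L(\Phi_{\widehat X_1}^{s'})$, which by \eqref{eqn:scaling3} equal $CH^{L,L-s}(\widehat X_2,e^{-s}\lambda)\circ s_s$ and $CH^{L,L-s'}(\widehat X_1,e^{-s'}\lambda)\circ s_{s'}$. The scaling maps $s_s,s_{s'}$ are isomorphisms; and since $\widehat X_1,\widehat X_2$ are $L$-approximations and $0<s',s<\delta$ we have $(e^L,e^{L+s}]\cap\op{Spec}(\partial\widehat X_2,\lambda)=\emptyset$ and $(e^L,e^{L+s'}]\cap\op{Spec}(\partial\widehat X_1,\lambda)=\emptyset$, which (after rescaling the contact forms) is exactly what Lemma~\ref{lem:intervaliso} requires to make $CH^{L,L-s}(\widehat X_2,e^{-s}\lambda)$ and $CH^{L,L-s'}(\widehat X_1,e^{-s'}\lambda)$ isomorphisms. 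Thus $u\circ T$ and $v\circ u$ are isomorphisms; the first shows $u$ surjective and $T$ injective, the second shows $u$ injective, so $u$ is an isomorphism and therefore $T=u^{-1}\circ(u\circ T)$ is an isomorphism. This proves the claim, hence (a).

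\medskip

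Part (b) then needs no further work with the inverse limit. By Definition~\ref{def:mathringpersistence}, $\mathring{CH}^{L',L}(X,\lambda)$ sends a compatible family $(x_\varphi)_{\varphi\in\op{Emb}(X,\lambda)}$ to the family $(CH^{L',L}(W_\varphi,\mu_\varphi)(x_\varphi))_\varphi$, while $\pi^L_{\widehat X}$ and $\pi^{L'}_{\widehat X}$ are both evaluation at the component $\varphi=\iota_{\widehat X}$. Evaluating there yields $\pi^{L'}_{\widehat X}\circ\mathring{CH}^{L',L}(X,\lambda)=CH^{L',L}(\widehat X,\lambda)\circ\pi^L_{\widehat X}$, which is the asserted commutativity; the hypothesis that $\widehat X$ is both an $L$- and an $L'$-approximation is used only to guarantee (via part (a)) that $\pi^L_{\widehat X}$ and $\pi^{L'}_{\widehat X}$ are defined and are isomorphisms.
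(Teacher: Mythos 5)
Your proposal is correct and follows essentially the same route as the paper: the key claim you isolate is exactly the paper's Lemma~\ref{lem:approximations}, and your sandwich argument (nesting Liouville-flow rescalings, using functoriality of transfer morphisms, the scaling identity \eqref{eqn:scaling3}, Lemma~\ref{lem:intervaliso} with the spectral-gap bullet of Definition~\ref{def:Lapprox}, and the two-out-of-three injectivity/surjectivity trick) matches the paper's proof, with the minor cosmetic difference that you flow by chosen times $s'<s<\delta$ while the paper flows by the full $\delta$, and you use a cofinal chain of $L$-approximations where the paper uses the full directed set $\op{Approx}^L(X,\lambda)$. Part (b) is likewise handled as in the paper, directly from Definition~\ref{def:mathringpersistence}.
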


The proof of Proposition~\ref{prop:approx} will use the following lemma.

\begin{lemma}
\label{lem:approximations}
Let $(X,\lambda)$ be a Liouville domain, let $L\in\R$, and suppose that condition \eqref{eqn:sigma} holds. Let $\widehat{X}$ and $\widehat{X}'$ be $L$-approximations of $(X,\lambda)$ such that $\widehat{X}\subset\op{int}(\widehat{X}')$. Then the transfer morphism
\begin{equation}
\label{eqn:iitm}
CH^L(\widehat{X}',\lambda) \longrightarrow CH^L(\widehat{X},\lambda)
\end{equation}
induced by inclusion is an isomorphism.
\end{lemma}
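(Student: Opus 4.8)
The plan is to interpose a scaled copy of $(\widehat{X}',\lambda)$ between $\widehat{X}$ and $\widehat{X}'$ and reduce everything to the isomorphism criterion of Lemma~\ref{lem:intervaliso}, using functoriality of transfer morphisms (Proposition~\ref{prop:CHL}) and the scaling relation \eqref{eqn:scaling3}. First I would observe that the chain of inclusions needs a third term: since $\widehat{X}\subset\op{int}(\widehat{X}')$ and both have smooth boundaries that are graphs over $\partial X$ via $\phi_V$, one can choose $\delta_0>0$ small so that the image $(\widehat{X}')_{\delta_0}=\Phi_{\widehat{X}'}^{\delta_0}(\widehat{X}')$ still satisfies $\widehat{X}\subset\op{int}((\widehat{X}')_{\delta_0})$; this uses the exhaustion of $\op{int}(\widehat{X}')$ by the $X_\delta$ argument from the proof of Lemma~\ref{lem:directed}. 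Then we have exact symplectic embeddings
\[
(\widehat{X},\lambda) \hookrightarrow ((\widehat{X}')_{\delta_0},\lambda) \hookrightarrow (\widehat{X}',\lambda),
\]
where the second is $\Phi_{\widehat{X}'}^{\delta_0}:(\widehat{X}',e^{-\delta_0}\lambda)\hookrightarrow(\widehat{X}',\lambda)$ precomposed with the scaling identification, and all three domains are nondegenerate Liouville domains.

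The key step is to show that the transfer morphism $CH^L(\widehat{X}',\lambda)\to CH^L((\widehat{X}')_{\delta_0},\lambda)$ induced by $\Phi_{\widehat{X}'}^{\delta_0}$ is an isomorphism when $\delta_0$ is small enough. By the Scaling property, specifically equation \eqref{eqn:scaling3}, this transfer morphism factors as $CH^{L,L-\delta_0}(\widehat{X}',e^{-\delta_0}\lambda)\circ s_{\delta_0}$, so it suffices to check that the persistence map $CH^{L,L-\delta_0}(\widehat{X}',e^{-\delta_0}\lambda)$ is an isomorphism. By Lemma~\ref{lem:intervaliso}, this holds provided $(e^{L-\delta_0},e^{L}]\cap\op{Spec}(\partial\widehat{X}',e^{-\delta_0}\lambda)=\emptyset$, i.e.\ $(e^{L},e^{L+\delta_0}]\cap\op{Spec}(\partial\widehat{X}',\lambda)=\emptyset$; but the last bullet of Definition~\ref{def:Lapprox} gives $(e^L,e^{L+\delta}]\cap\op{Spec}(\partial\widehat{X}',\lambda)=\emptyset$ where $\delta$ is the quantity \eqref{eqn:delta} depending only on $(X,\lambda)$ and $L$, so any $\delta_0<\delta$ works (and we may shrink our earlier choice of $\delta_0$ to arrange this).

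Finally I would combine the pieces: by functoriality \eqref{eqn:transferfunctor}, the transfer morphism $CH^L(\widehat{X}',\lambda)\to CH^L(\widehat{X},\lambda)$ equals the composition of $CH^L(\widehat{X}',\lambda)\to CH^L((\widehat{X}')_{\delta_0},\lambda)$ (just shown to be an isomorphism) with $CH^L((\widehat{X}')_{\delta_0},\lambda)\to CH^L(\widehat{X},\lambda)$. Since $(\widehat{X}')_{\delta_0}$ is again an $L$-approximation to $(X,\lambda)$ — here one must check it satisfies all four bullets of Definition~\ref{def:Lapprox}, the spectral condition being exactly what we just verified and the containment $X_\delta\subset\op{int}((\widehat{X}')_{\delta_0})$ following by choosing $\delta_0$ small — we may iterate: the second map is of the same type as the original, now with the gap between the two approximations strictly smaller. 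The main obstacle is making this reduction terminate rather than requiring a genuinely transfinite argument. The cleanest fix is not to iterate at all but to apply the argument symmetrically: choose $\delta_0$ small enough that simultaneously $\widehat{X}\subset\op{int}((\widehat{X}')_{\delta_0})$ and $(\widehat{X}')_{\delta_0}\subset\op{int}(\widehat{X})$ fails in general, so instead pick $\delta_0$ with $\widehat{X}\subset\op{int}((\widehat{X}')_{\delta_0})\subset\widehat{X}'\subset$ no further nesting available — therefore the right approach is: show directly that $CH^L(\widehat{X}',\lambda)\to CH^L(\widehat{X},\lambda)$ is injective and surjective by sandwiching. Concretely, pick $\delta_0$ small so that $\Phi^{\delta_0}_{\widehat X'}(\widehat X')\subset \op{int}(\widehat X)$; then the composition $CH^L(\widehat{X}',\lambda)\to CH^L(\widehat{X},\lambda)\to CH^L(\Phi^{\delta_0}_{\widehat{X}'}(\widehat{X}'),\lambda)$ is, by functoriality, the isomorphism established above, which forces the first map $CH^L(\widehat{X}',\lambda)\to CH^L(\widehat{X},\lambda)$ to be injective; symmetrically, choosing $\delta_0'$ with $\Phi^{\delta_0'}_{\widehat{X}}(\widehat{X})\subset\op{int}(\widehat{X}')$ — wait, this is automatic only if $\widehat X\subset \op{int}(\widehat X')$, which we have — gives that $CH^L(\widehat{X},\lambda)\to CH^L(\widehat{X}',\lambda)$ composed appropriately is an isomorphism, forcing surjectivity of the original map. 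Two injectivity/surjectivity estimates of this sandwich type, each reducing to the one isomorphism proved via Lemma~\ref{lem:intervaliso} and \eqref{eqn:scaling3}, complete the proof.
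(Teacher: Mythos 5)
Your key step, showing that the scaling transfer $CH^L(\Phi^{\delta_0}_{\widehat{X}'})$ is an isomorphism via \eqref{eqn:scaling3}, Lemma~\ref{lem:intervaliso}, and the last bullet of Definition~\ref{def:Lapprox}, is exactly the paper's, and sandwiching the inclusion-induced transfer between such scaling isomorphisms is also the paper's strategy. But the execution has two problems. First, the choice of $\delta_0$ goes the wrong way: for \emph{small} $\delta_0$ the shrunk domain $\Phi^{\delta_0}_{\widehat{X}'}(\widehat{X}')$ still contains $\widehat{X}$, rather than being contained in $\op{int}(\widehat{X})$. Writing $\partial\widehat{X}=\phi_V(\op{graph}(g))$ and $\partial\widehat{X}'=\phi_V(\op{graph}(g'))$ with $g<g'$, the containment $\Phi^{\delta_0}_{\widehat{X}'}(\widehat{X}')\subset\op{int}(\widehat{X})$ requires $\delta_0>\sup(g'-g)$, while the spectral gap requires $\delta_0\le\delta$; these are compatible precisely because Definition~\ref{def:Lapprox} forces $g,g'$ to take values in $(-\delta,0)$, so that $\delta_0=\delta$ works (this is the choice made in the paper). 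You never verify this compatibility, and ``pick $\delta_0$ small'' is false as stated.

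Second, and more seriously, the surjectivity half does not work. There is no transfer morphism $CH^L(\widehat{X},\lambda)\to CH^L(\widehat{X}',\lambda)$, and the containment $\Phi^{\delta_0'}_{\widehat{X}}(\widehat{X})\subset\op{int}(\widehat{X}')$ you invoke is trivially true and only yields the factorization of the transfer $CH^L(\widehat{X}',\lambda)\to CH^L(\widehat{X},e^{-\delta_0'}\lambda)$ as $CH^L(\Phi^{\delta_0'}_{\widehat{X}})\circ a$, where $a$ denotes the map \eqref{eqn:iitm}; since the left factor is an isomorphism this carries no information beyond $a$ itself and does not force surjectivity of $a$. What is needed is a third transfer with the \emph{middle} map as its right-hand factor: with $\delta$ as in \eqref{eqn:delta} one has the chain $\widehat{X}_\delta\subset\widehat{X}'_\delta\subset\widehat{X}\subset\widehat{X}'$, each contained in the interior of the next, and the induced transfers $a,b,c$ along it satisfy $b\circ a=CH^L(\Phi^{\delta}_{\widehat{X}'})$ and $c\circ b=CH^L(\Phi^{\delta}_{\widehat{X}})$, both isomorphisms; the second one uses the spectral condition of Definition~\ref{def:Lapprox} for $\partial\widehat{X}$, which your argument never invokes. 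Then $b$ is surjective (from $b\circ a$) and injective (from $c\circ b$), hence an isomorphism, and $a=b^{-1}\circ(b\circ a)$ is an isomorphism. Your injectivity step is the first of these two compositions; the second is the missing ingredient, and supplying it recovers the paper's proof.
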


\begin{proof}
Let $\delta$ denote the number $\delta_L$ in \eqref{eqn:delta_L}. Then we have inclusions
\begin{equation}
\label{eqn:inclusions}
\widehat{X}_{\delta} \subset \widehat{X}'_{\delta} \subset \widehat{X} \subset \widehat{X}'
\end{equation}
with each set including into the interior of the next. Let $\imath:\widehat{X}\hookrightarrow X$ and $\imath':\widehat{X}'\hookrightarrow X$ denote the inclusion maps.  Then it follows from \eqref{eqn:inclusions} that in $\op{Emb}(X,\lambda)$ we have
\[
\imath\circ\Phi_{\widehat{X}}^{\delta} < \imath'\circ\Phi_{\widehat{X}'}^{\delta} < \imath < \imath'.
\]
From \eqref{eqn:inversetransfer} we have induced transfer morphisms
\[
CH^L(\widehat{X}',\lambda) \longrightarrow CH^L(\widehat{X},\lambda) \longrightarrow CH^L(\widehat{X}',e^{-\delta}\lambda) \longrightarrow CH^L(\widehat{X},e^{-\delta}\lambda).
\]
The first of these morphisms is the inclusion-induced transfer morphism \eqref{eqn:iitm}.
The composition of the first two morphisms is $CH^L(\Phi_{\widehat{X}'}^{\delta})$, by the functoriality of transfer morphisms \eqref{eqn:transferfunctor}. This map is an isomorphism by equation \eqref{eqn:scaling3}, Lemma~\ref{lem:intervaliso}, and the fourth bullet point in Definition~\ref{def:Lapprox}. Likewise, the composition of the last two morphisms is $CH^L(\Phi_{\widehat{X}}^{\delta})$, which is an isomorphism. It follows that \eqref{eqn:iitm} is an isomorphism.
\end{proof}

\begin{proof}[Proof of Proposition~\ref{prop:approx}.]
(a)
Let $\op{Approx}^L(X,\lambda)$ denote the set of $L$-approximations to $(X,\lambda)$. We define a partial order on $\op{Approx}^L(X,\lambda)$ by declaring that $\widehat{X}_1 < \widehat{X}_2$ iff $\widehat{X}_1 \subset \op{int}(\widehat{X}_2)$. Since $L$-approximations exhaust $\op{int}(X)$ by Lemma~\ref{lem:Lapproxexist}, it follows that $(\op{Approx}^L(X,\lambda),<)$ is a directed set.

We define an inverse system over this directed set, by assigning to each $L$-approximation $\widehat{X}$ the $\Z/2$-graded vector space $CH^L(\widehat{X},\lambda)$, and to each pair of $L$-approximations $\widehat{X}_1,\widehat{X}_2$ with $\widehat{X}_1<\widehat{X}_2$ the transfer morphism $CH^L(\widehat{X}_2,\lambda) \to CH^L(\widehat{X}_1,\lambda)$ induced by inclusion. 

There is an inclusion of directed systems $\op{Approx}^L(X,\lambda) \to \op{Emb}(X,\lambda)$, sending an $L$-approximation $\widehat{X}$ to the inclusion map $\widehat{X}\to X$. We then have a restriction map of inverse limits
\begin{equation}
\label{eqn:rmil}
\mathring{CH}^L(X,\lambda) \longrightarrow \varprojlim \left\{ CH^L(\widehat{X},\lambda) \;\bigg|\; \widehat{X}\in \op{Approx}^L(X,\lambda)\right\}.
\end{equation}
Since $L$-approximations are cofinal in $\op{Emb}(X,\lambda)$, this restriction map is an isomorphism. For any particular $L$-approximation $\widehat{X}$, the projection map from the inverse limit on the right hand side of \eqref{eqn:rmil} to $CH^L(\widehat{X},\lambda)$ is an isomorphism by Lemma~\ref{lem:approximations}.

(b) This follows from Definition~\ref{def:mathringpersistence}.
\end{proof}


\subsection{Stability}

\begin{proposition}
\label{prop:stability}
Let $(X,\lambda)$ be a Liouville domain, and let $L_1, L_2 \in\R$ with $L_1 < L_2$. Suppose that
\begin{equation}
\label{eqn:stability}
\left(e^{L_1},e^{L_2}\right] \cap \op{Spec}(\partial X,\lambda) = \emptyset.
\end{equation}
Then the map
\[
\mathring{CH}^{L_2,L_1}(X,\lambda) : \mathring{CH}^{L_1}(X,\lambda) \longrightarrow \mathring{CH}^{L_2}(X,\lambda)
\]
is an isomorphism.
\end{proposition}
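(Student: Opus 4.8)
The plan is to reduce to the nondegenerate case via $L$-approximations, for which we already have the analogous statement (Lemma~\ref{lem:intervaliso}) and the comparison maps (Proposition~\ref{prop:approx}). First I would observe that the hypothesis \eqref{eqn:stability} is considerably stronger than the existence hypothesis \eqref{eqn:sigma} needed to invoke Proposition~\ref{prop:approx}: indeed, \eqref{eqn:stability} implies \eqref{eqn:sigma} for $L$ (with any $\sigma \le L'-L$), and it also implies \eqref{eqn:sigma} for $L'$, since $\op{Spec}(\partial X,\lambda)$ is closed and $e^{L'}\notin\op{Spec}(\partial X,\lambda)$ by \eqref{eqn:stability}, so some half-open interval to the right of $e^{L'}$ also misses the spectrum. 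Hence Proposition~\ref{prop:approx} applies at both $L$ and $L'$.

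Next I would choose a single closed subset $\widehat{X}\subset\op{int}(X)$ that is simultaneously an $L$-approximation and an $L'$-approximation to $(X,\lambda)$; this is possible by the standard transversality argument mentioned after Definition~\ref{def:Lapprox}, applied so that the perturbed boundary $\partial\widehat{X}$ lies inside the relevant sets $X_{\delta}$ and $X_{\delta'}$ and carries no Reeb orbits with action in $(e^L,e^{L+\delta}]$ or $(e^{L'},e^{L'+\delta'}]$. With such a $\widehat{X}$ fixed, Proposition~\ref{prop:approx}(a) gives isomorphisms $\pi_{\widehat{X}}^L:\mathring{CH}^L(X,\lambda)\xrightarrow{\simeq}CH^L(\widehat{X},\lambda)$ and $\pi_{\widehat{X}}^{L'}:\mathring{CH}^{L'}(X,\lambda)\xrightarrow{\simeq}CH^{L'}(\widehat{X},\lambda)$, and Proposition~\ref{prop:approx}(b) says these fit into a commuting square with $\mathring{CH}^{L',L}(X,\lambda)$ on the left and $CH^{L',L}(\widehat{X},\lambda)$ on the right.

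It then suffices to show the right-hand vertical map $CH^{L',L}(\widehat{X},\lambda)$ is an isomorphism, which follows from Lemma~\ref{lem:intervaliso} once we verify $(e^L,e^{L'}]\cap\op{Spec}(\partial\widehat{X},\lambda)=\emptyset$. For this I would split the interval at $e^{L+\delta}$ (where $\delta$ is the number \eqref{eqn:delta} attached to $(X,\lambda)$ and $L$): on $(e^L,e^{L+\delta}]$ the approximation condition (the fourth bullet of Definition~\ref{def:Lapprox}) gives emptiness directly, while on $(e^{L+\delta},e^{L'}]$ — an interval that by definition of $\delta$ meets $\op{Spec}(\partial X,\lambda)$ unless it is empty, but it must be empty here by \eqref{eqn:stability} — one uses that $\partial\widehat{X}$ differs from $\partial X$ only inside the collar $\phi_V((-\delta,0)\times\partial X)$, so $X_\delta\subset\widehat{X}\subset X$ and the Reeb orbits of $\partial\widehat{X}$ with action in that range, if any existed, would have to come from the perturbed region; a standard argument shows the action spectrum of $\partial\widehat{X}$ below $e^{L+\delta}$ agrees with that of $\partial X$ on the relevant range, up to the controlled perturbation, and one chooses $\widehat{X}$ small enough that nothing new appears below $e^{L'}$. (More simply: since $L'\le L+\delta$ when $e^{L'}\notin\op{Spec}$ forces the gap to extend, one may just take $\delta'=\delta$ and the fourth bullet covers the whole interval.) Thus $CH^{L',L}(\widehat{X},\lambda)$ is an isomorphism by Lemma~\ref{lem:intervaliso}, and chasing the commuting square shows $\mathring{CH}^{L',L}(X,\lambda)$ is too.

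The main obstacle I anticipate is the bookkeeping around the numbers $\delta$, $\delta'$ and the choice of a common $L$- and $L'$-approximation: one must be careful that the single perturbation $\widehat{X}$ simultaneously satisfies all four bullets of Definition~\ref{def:Lapprox} at both levels, and that the spectral gap \eqref{eqn:stability} of $(X,\lambda)$ is genuinely inherited by $\partial\widehat{X}$ on the full interval $(e^L,e^{L'}]$ rather than just on a sub-collar. Once that is set up correctly, the rest is a formal diagram chase through Proposition~\ref{prop:approx} and Lemma~\ref{lem:intervaliso}.
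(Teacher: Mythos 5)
Your proposal is correct and follows essentially the same route as the paper's proof: check that \eqref{eqn:stability} gives condition \eqref{eqn:sigma} at both $L$ and $L'$ (using that the spectrum is closed), choose a single $\widehat{X}$ that is simultaneously an $L$- and $L'$-approximation, and then combine Proposition~\ref{prop:approx} with Lemma~\ref{lem:intervaliso}. Your parenthetical ``more simply'' remark is in fact the whole justification needed for the spectral-gap step --- hypothesis \eqref{eqn:stability} forces $\delta \ge L'-L$ in \eqref{eqn:delta}, so $(e^L,e^{L'}]\subset(e^L,e^{L+\delta}]$ and the fourth bullet of Definition~\ref{def:Lapprox} covers the entire interval --- whereas the preceding discussion about the collar and the perturbed region is unnecessary and not rigorous as stated, since the action spectrum of $\partial\widehat{X}$ is controlled only through that fourth bullet, not by comparison with $\op{Spec}(\partial X,\lambda)$.
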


\begin{proof}
It follows from \eqref{eqn:stability} that condition \eqref{eqn:sigma} holds for $L=L_1$. Since $\op{Spec}(\partial X,\lambda)$ is a closed set, condition \eqref{eqn:sigma} also holds for $L=L_2$. By following the proof of Lemma~\ref{lem:Lapproxexist} using $\delta < \delta_{L_2}$, we can find $\widehat{X}\subset\op{int}(X)$ which is both an $L_1$-approximation and an $L_2$-approximation to $(X,\lambda)$. By Proposition~\ref{prop:approx}, it is enough to show that the persistence morphism
\[
CH^{L_2,L_1}(\widehat{X},\lambda) :CH^{L_1}(\widehat{X},\lambda) \longrightarrow CH^{L_2}(\widehat{X},\lambda)
\]
is an isomorphism. This holds by Lemma~\ref{lem:intervaliso}.
\end{proof}

\begin{remark}
It follows from Propositions~\ref{prop:approx} and \ref{prop:stability} that if $\op{Spec}(\partial X,\lambda)$ is discrete, then the $\Z/2$-graded vector spaces $\mathring{CH}^L(X,\lambda)$, and the maps $\mathring{CH}^{L_2,L_1}(X,\lambda)$ for $L_1\le L_2$, constitute a persistence module as in Definition~\ref{def:pm}.
\end{remark}


\subsection{The dynamically tame case}
\label{sec:dynamicallytame}

Recall from Proposition~\ref{prop:approx} that if $(X,\lambda)$ is a Liouville domain and if $e^L\in\R\setminus\op{Spec}(\partial X,\lambda)$, then $\mathring{CH}^L(X,\lambda)$ is a finite dimensional $\Z/2$-graded vector space.

\begin{definition}
\label{def:ecdt}
Let $(X,\lambda)$ be a Liouville domain, let $L\in\R$, and suppose that $e^L\notin\op{Spec}(\partial X,\lambda)$. Define
\[
\chi^L\left(X,\lambda\right) = \dim\left(\mathring{CH}^L_0(X,\lambda)\right) - \dim\left(\mathring{CH}^L_1(X,\lambda)\right) \in \Z.
\]
\end{definition}

\begin{definition}
\label{def:dynamicallytame}
Let $(X,\lambda)$ be a Liouville domain. We say that $(X,\lambda)$ is {\bf dynamically tame\/} if there is a closed and discrete set $S\subset\R$, which is bounded from below, such that:
\begin{description}
\item{(*)} If $L_1,L_2\in\R$ with $L_1 < L_2$ and $e^{L_1}, e^{L_2}\notin\op{Spec}(\partial X,\lambda)$, and if $[L_1, L_2]\cap S = \emptyset$, then
\[
\chi^{L_1}(X,\lambda) = \chi^{L_2}(X,\lambda).
\]
\end{description}
\end{definition}

\begin{definition}
\label{def:tameecj}
Suppose that $(X,\lambda)$ is a dynamically tame Liouville domain, and let $L\in\R$. Define
\[
\chi^{L,+}(X,\lambda) = \chi^{L+\delta}(X,\lambda) \in \Z
\]
where $\delta>0$ is small and
\begin{equation}
\label{eqn:notinspec}
e^{L+\delta}\notin\op{Spec}(\partial X,\lambda).
\end{equation}
Note that we can find arbitrarily small $\delta>0$ satisfying \eqref{eqn:notinspec}, since $\op{Spec}(\partial X,\lambda)\subset\R$ has measure zero. For such $\delta$, the Euler characteristic $\chi^{L+\delta}(X,\lambda)\in\Z$ is independent of $\delta$ when $\delta$ is sufficiently small, since $S$ is assumed to be closed and discrete in Definition~\ref{def:dynamicallytame}.

Likewise, define
\[
\chi^{L,-}(X,\lambda)=\chi^{L-\delta}(X,\lambda)
\]
where $\delta>0$ is small and $e^{L-\delta}\notin\op{Spec}(\partial X,\lambda)$.

Define the {\bf Euler characteristic jump\/}
\[
\Delta_L(\mathring{CH}(X,\lambda)) = \chi^{L,+}(X,\lambda) - \chi^{L,-}(X,\lambda) \in \Z.
\]
Note that if $S\subset\R$ is as in Definition~\ref{def:dynamicallytame}, then $\Delta_L(\mathring{CH}(X,\lambda)) = 0$ when $L\notin S$.
\end{definition}

\begin{example}
\label{ex:discretespectrum}
Suppose that $\op{Spec}(\partial X,\lambda)\subset\R$ is discrete. Then $(X,\lambda)$ is dynamically tame: the condition in Definition~\ref{def:dynamicallytame} is fulfilled by 
\[
S=\{L\in\R\mid e^L\in\op{Spec}(\partial X,\lambda)\},
\]
by Proposition~\ref{prop:stability}.

If moreover $(X,\lambda)$ is nondegenerate, then we also have
\begin{equation}
\label{eqn:DeltaCH}
\Delta_L(\mathring{CH}(X,\lambda)) = \Delta_L(CH(X,\lambda)),
\end{equation}
by Proposition~\ref{prop:rhoL}.
\end{example}

\begin{definition}
\label{def:zetadt}
If $(X,\lambda)$ is a dynamically tame Liouville domain, define
\begin{equation}
\label{eqn:zetamathring}
\zeta_{\mathring{CH}}(X,\lambda) = \sum_{L\in\R}\Delta_L \left(\mathring{CH}(X,\lambda)\right)t^L \in \Lambda.
\end{equation}
This is a well defined element of the Novikov ring $\Lambda$, because the set of $L$ such that $\Delta_L(\mathring{CH}(X,\lambda))\neq 0$ is closed and discrete and bounded from below. Define
\begin{equation}
\label{eqn:zetadt}
\zeta(X,\lambda) = \Theta\left(\zeta_{\mathring{CH}}(X,\lambda)\right) \in 1+\Lambda^+,
\end{equation}
where $\Theta:\Lambda\to 1+\Lambda^+$ is the function in Definition~\ref{def:Thetadefinition}.
\end{definition}

\begin{example}
If $(X,\lambda)$ is nondegenerate, then the two definitions of $\zeta(X,\lambda)$ in Definition~\ref{def:zetanondegenerate} and \ref{def:zetadt} agree, by equation \eqref{eqn:DeltaCH} and Lemma~\ref{lem:algebraic}.
\end{example}

\begin{proof}[Proof of Theorem~\ref{thm:tame}.]
Let $(X,\lambda)$ and $(X',\lambda')$ be dynamically tame Liouville domains, and suppose there exists an exact symplectomorphism $\phi:(\op{int}(X),d\lambda) \to (\op{int}(X'),d\lambda')$. By Remark~\ref{rem:mathringnotpersistent}, we have $\mathring{CH}^L(X,\lambda) \simeq \mathring{CH}^L(X',\lambda')$ for all $L$. By Definition~\ref{def:ecdt}, if $e^L\notin\op{Spec}(\partial X,\lambda) \cup \op{Spec}(\partial X',\lambda')$, then $\chi^L(X,\lambda) = \chi^L(X',\lambda')$. Then $\Delta_L(\mathring{CH}(X,\lambda)) = \Delta_L(\mathring{CH}(X',\lambda'))$ for all $L\in\R$ as in Definition~\ref{def:tameecj}.  By Definition~\ref{def:zetadt}, we conclude that $\zeta(X,\lambda) = \zeta(X',\lambda')$.
\end{proof}



\section{Computations}
\label{sec:computations}

We now prove Propositions~\ref{prop:toric} and \ref{prop:s1inv}.

\begin{lemma}
\label{lem:toric}
Let $X_\Omega$ be a star-shaped toric domain as in Proposition~\ref{prop:toric}. If $e^L\notin\op{Spec}(\partial X_\Omega)$, then the Euler characteristic in Definition~\ref{def:ecdt} is given by
\begin{equation}
\label{eqn:chilxomega}
\chi^L(X_\Omega) = \floor{\frac{e^L}{a}} + \floor{\frac{e^L}{b}}.
\end{equation}
\end{lemma}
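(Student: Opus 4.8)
The plan is to reduce the computation to the well-understood case of a small ellipsoid and then use an interpolation argument across the action level. First I would recall the key input: the positive $S^1$-equivariant symplectic homology of an ellipsoid $E(a,b)$ with $a/b$ irrational is well-known — in each even degree $2k$ with $k\ge 1$ there is a one-dimensional summand generated by the $\lfloor k \rfloor$-th iterate of one of the two simple Reeb orbits, with action given by the $k$-th smallest element of the sequence $\{ma, nb : m,n\ge 1\}$, and the homology vanishes in odd degrees. Consequently, for the ellipsoid, $\chi^L\bigl(E(a,b)\bigr)$ counts the number of pairs $(m,n)$ with $m\ge 1$ and $ma\le e^L$ plus those with $n\ge 1$ and $nb\le e^L$, which is exactly $\lfloor e^L/a\rfloor + \lfloor e^L/b\rfloor$ (using $e^L\notin\op{Spec}$ so no boundary cases arise). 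This establishes \eqref{eqn:chilxomega} when $X_\Omega = E(a,b)$.

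Next I would pass from the ellipsoid to a general star-shaped toric domain $X_\Omega$ with $\partial_+\Omega$ running from $(a,0)$ to $(0,b)$. The crucial geometric fact is that, near the two ``axis'' Reeb orbits, $\partial X_\Omega$ agrees (up to the Liouville flow and reparametrization) with the boundary of an ellipsoid: the simple Reeb orbit in the $z_1$-axis has action $a$ and the one in the $z_2$-axis has action $b$, regardless of the shape of $\partial_+\Omega$ in between. For any fixed $L$ with $e^L\notin\op{Spec}(\partial X_\Omega)$, I would choose $\epsilon>0$ small and use the inclusions of exact symplectic embeddings $E(a',b')\hookrightarrow \op{int}(X_\Omega)\hookrightarrow E(a,b)$ for suitable $a',b'$ close to $a,b$ (shrinking $E(a,b)$ slightly inward and inflating a small ellipsoid to touch $\partial X_\Omega$ along the axes); combined with the transfer-morphism compatibility of Proposition~\ref{prop:CHL} and Lemma~\ref{lem:intervaliso}, this sandwiches $\mathring{CH}^L(X_\Omega)$ between two ellipsoid groups whose Euler characteristics, by the previous paragraph, both equal $\lfloor e^L/a\rfloor + \lfloor e^L/b\rfloor$ once $a',b'$ are taken close enough to $a,b$ that no element of the relevant Reeb spectra crosses $e^L$. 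Since the transfer morphisms in this sandwich are compatible under composition, a rank-count forces $\chi^L(X_\Omega)$ to take the same value.

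The step I expect to be the main obstacle is making precise the claim that $\mathring{CH}^L(X_\Omega)$ is genuinely controlled by the two axial Reeb orbits when $\Omega$ may have complicated boundary curvature; in general $\partial_+\Omega$ produces additional Reeb orbits (possibly in $S^1$-families, as in the Example with $X_f$) whose actions can lie below $e^L$, and one must verify that these contribute nothing to the Euler characteristic. The clean way to handle this is the toric model computation: for an open dense set of smooth $\Omega$ one perturbs to a nondegenerate contact form and computes $CH^L$ directly from the combinatorics of lattice points under $\partial_+\Omega$, showing the full homology (not just its Euler characteristic) is supported in even degrees with total rank $\lfloor e^L/a\rfloor + \lfloor e^L/b\rfloor$ — this is essentially the content of the ellipsoid calculation propagated along a toric degeneration, and it also immediately yields dynamical tameness via Example~\ref{ex:discretespectrum} after a generic perturbation. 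I would then remove the nondegeneracy hypothesis using Proposition~\ref{prop:approx}, since $\chi^L$ is defined via $\mathring{CH}^L$ and is insensitive to the approximation chosen. Summing $\chi^L$ over its jumps and applying $\Theta$ as in Definition~\ref{def:zetadt} will give \eqref{eqn:zetatoric} in the proof of Proposition~\ref{prop:toric}.
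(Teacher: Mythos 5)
Your first paragraph (the irrational ellipsoid computation) is fine, but the interpolation step collapses on two independent counts. Geometrically, for a general star-shaped toric domain there are no inclusions $E(a',b')\subset X_\Omega\subset E(a,b)$ with $a',b'$ close to $a,b$: the lemma covers concave and non-convex $\Omega$ as well as convex ones, and even for a convex $\Omega$ far from triangular (say a smoothed square with $a=b=1$) the outer inclusion forces an ellipsoid with much larger parameters, while for concave $\Omega$ the inner one fails. Algebraically, even if such a sandwich existed, transfer morphisms do not ``squeeze'' the Euler characteristic of the middle term: the domains $X_f$ of \eqref{eqn:Xf} with $f$ $C^2$-small are exactly symplectically sandwiched between balls of nearly equal size, yet by Proposition~\ref{prop:s1inv} their $\chi^L$ and zeta functions differ from those of the ball, because Reeb orbits of action below $e^L$ do in general contribute to $\chi^L$. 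So no argument that only sees the two axis orbits and a pair of nearby ellipsoids can prove \eqref{eqn:chilxomega}; the content of the lemma is a cancellation specific to toric boundaries, and your own third paragraph correctly identifies this as the missing step but then does not supply it.

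What you substitute for it --- that after a generic perturbation the full filtered homology is concentrated in even degrees with total rank $\floor{e^L/a}+\floor{e^L/b}$ --- is an unproved assertion that would require computing the differential on the filtered complex, is strictly stronger than what is needed, and is in fact false at action levels lying between the actions of the two orbits created from a perturbed Morse--Bott circle. The mechanism the paper uses, and which your proposal never isolates, is that by the Reeb Orbits property of Proposition~\ref{prop:CH} the jumps of the Euler characteristic are \emph{chain-level} signed counts of good Reeb orbits, so no knowledge of the differential is required. One classifies the Reeb orbits on $\partial X_\Omega$: the two axis orbits of actions $a$ and $b$ (elliptic, with elliptic iterates, after arranging irrational slope at the endpoints), and Morse--Bott tori over the rational-slope points of $\partial_+\Omega$ with action $v_1w_1+v_2w_2$. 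One then builds an $L$-approximation $\widehat{X}$ (Definition~\ref{def:Lapprox}) in which each such torus of action at most $e^L$ is perturbed into one elliptic and one positive hyperbolic simple orbit, both of action less than $e^L$; since elliptic orbits have Lefschetz sign $+1$ and positive hyperbolic orbits $-1$, these contributions cancel in $\chi^L(\widehat{X})$, leaving only the iterates of the slightly shrunken axis orbits, and Proposition~\ref{prop:approx} identifies $\chi^L(X_\Omega)$ with $\chi^L(\widehat{X})$. Your outline gestures at Proposition~\ref{prop:approx} and at perturbing Morse--Bott families, but without this sign cancellation (or an honest computation of the homology) the proof does not go through.
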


\begin{proof}
We first recall a formula for the Reeb vector field on $\partial X_\Omega$.
Let $\mu:\C^2\to \R^2_{\ge 0}$ be the moment map sending $(z_1,z_2)\mapsto \pi(|z_1|^2,|z_2|^2)$, so that $X_\Omega = \mu^{-1}(\Omega)$. If $(w_1,w_2)\in\op{int}(\partial_+\Omega)$, then $\mu^{-1}(w_1,w_2)$ is a two-torus in $\partial X_\Omega$. Let $(v_1,v_2)$ be an outward normal vector to $\partial_+\Omega$ at $(w_1,w_2)$. Then by the calculations in \cite[\S2.2]{gh}, the Reeb vector field on the torus $\mu^{-1}(w_1,w_2)$ is given by
\[
R = \frac{2\pi}{v_1w_1+v_2w_2}\left(v_1\frac{\partial}{\partial\theta_1} + v_2\frac{\partial}{\partial\theta_2}\right).
\]
Here $\theta_i$ denotes the argument of $z_i$. In particular, $R$ is tangent to the torus $\mu^{-1}(w_1,w_2)$. Similarly, $\mu^{-1}(a,0)$ and $\mu^{-1}(0,b)$ are circles to which the Reeb vector field is tangent and given by $\frac{2\pi}{a}\frac{\partial}{\partial\theta_1}$ and $\frac{2\pi}{b}\frac{\partial}{\partial\theta_2}$ respectively.

It follows that the Reeb orbits on $\partial X_\Omega$ are described as follows:
\begin{description}
\item{(i)}
$\alpha=\mu^{-1}(a,0)$ is a simple Reeb orbit with symplectic action $a$. If $\partial_+\Omega$ has irrational slope at $(a,0)$, then $\alpha$ and all of its iterates are elliptic.
\item{(ii)}
Likewise, $\beta=\mu^{-1}(0,b)$ is a simple Reeb orbit with symplectic action $b$. If $\partial_+\Omega$ has irrational slope at $(0,b)$, then $\beta$ and all of its iterates are elliptic.
\item{(iii)}
If $(w_1,w_2)\in\op{int}(\partial_+\Omega)$, and if $\partial_+\Omega$ has rational slope at $(w_1,w_2)$, let $(v_1,v_2)$ be the outward normal vector to $\partial_+\Omega$ at $(w_1,w_2)$, normalized so that $v_1,v_2$ are relatively prime integers. Then the torus $\mu^{-1}(w_1,w_2)$ is foliated by simple Reeb orbits $\gamma$, each of which has symplectic action
\begin{equation}
\label{eqn:toricaction}
\mathcal{A}(\gamma) = v_1w_1 + v_2w_2.
\end{equation}
If $(w_1,w_2)$ is not an inflection point of the curve $\partial_+\Omega$, then this is a Morse-Bott circle of Reeb orbits as in \cite{bourgeois}. One can then perturb $\partial X_\Omega$ in a neighborhood of $\mu^{-1}(w_1,w_2)$ so that this circle of simple Reeb orbits becomes an elliptic simple Reeb orbit (with the linearized return map $P_\gamma$ a slightly positive rotation or a slightly negative rotation, depending on whether $\Omega$ is locally convex or locally concave at $(w_1,w_2)$) and a positive hyperbolic simple Reeb orbit, possibly together with additional simple Reeb orbits of much larger symplectic action.
\end{description}

Suppose now that $e^L\in\R\setminus\op{Spec}(\partial X_\Omega)$. In particular, note that $e^L$ is not an integer multiple of $a$ or $b$. We now construct an $L$-approximation $\widehat{X}$ as follows (see Definition~\ref{def:Lapprox}). First, we replace $\Omega$ by a new domain $\widehat{\Omega}$ as follows. To start, we scale $\Omega$ slightly to obtain $\widehat{\Omega}$ with $\widehat{\Omega}\subset\op{int}(\Omega)$. Next, if $\partial_+\widehat{\Omega}$ has rational slope at either endpoint, or if it has any inflection points with rational slope, we can correct this by slightly rotating $\widehat{\Omega}$, and then translating it so that its endpoints are on the axes. (The fact that we can eliminate inflection points with rational slope this way follows from Sard's theorem applied to the slope function on $\partial_+\Omega$.) Now the Reeb orbits of $\partial X_{\widehat{\Omega}}$ consist of two elliptic orbits $\widehat{\alpha}$ and $\widehat{\beta}$ as in (i) and (ii) above with symplectic actions $\widehat{a}$ and $\widehat{b}$ respectively, where $\widehat{a}$ is slightly less than $a$ and $\widehat{b}$ is slightly less than $b$, together with the Morse-Bott circles of Reeb orbits described in (iii) above. It follows from equation \eqref{eqn:toricaction} that there are only finitely many such Morse-Bott circles of simple Reeb orbits such that the corresponding simple Reeb orbits have symplectic action less than or equal to $e^L$. We now define $\widehat{X}$ by perturbing $X_{\widehat{\Omega}}$ as in (iii) to replace each of these finitely many circles of simple Reeb orbits by two simple Reeb orbits of symplectic action\footnote{To be precise, we should also choose the perturbation so that for each positive integer $d$, the $d$-fold iterates of these two simple orbits either both have action $< e^L$ or both have action $>e^L$.} less than $e^L$ (one positive hyperbolic and one elliptic), possibly together with additional simple Reeb orbits of symplectic action greater than $e^L$. 

By Proposition~\ref{prop:approx},
\begin{equation}
\label{eqn:chil2}
\chi^L(X_\Omega) = \chi^L(\widehat{X}).
\end{equation}
The Reeb orbits in $\partial\widehat{X}$ with symplectic action less than or equal to $e^L$ arising from the Morse-Bott circles above cancel out in $\chi^L(\widehat{X})$, because elliptic orbits have positive Lefschetz sign while positive hyperbolic orbits have negative Lefschetz sign. Thus $\chi^L(\widehat{X})$ counts, with positive sign, the number of iterates of $\widehat{\alpha}$ or $\widehat{\beta}$ with symplectic action less than or equal to $e^L$. It follows that
\begin{equation}
\label{eqn:chil3}
\chi^L(\widehat{X}) = \floor{\frac{e^L}{\widehat{a}}} + \floor{\frac{e^L}{\widehat{b}}}
\end{equation}
Since $\widehat{a}$ is slightly less than $a$ and $\widehat{b}$ is slightly less than $b$ and $e^L$ is not an integer multiple of $a$ or $b$, equations \eqref{eqn:chil2} and \eqref{eqn:chil3} imply equation \eqref{eqn:chilxomega}.
\end{proof}

\begin{proof}[Proof of Proposition~\ref{prop:toric}.]
It follows from Lemma~\ref{lem:toric} that $X_\Omega$ is dynamically tame, with $S$ consisting of the numbers $\log(da)$ and $\log(db)$ where $d$ is a positive integer.
By equations \eqref{eqn:zetamathring} and \eqref{eqn:chilxomega}, we have
\[
\zeta_{\mathring{CH}}(X_\Omega) = \sum_{d=1}^\infty t^{\log(da)} + \sum_{d=1}^\infty t^{\log(db)}.
\]
By equation \eqref{eqn:zetadt},
\[
\zeta(X_\Omega) = \Theta\left(\sum_{d=1}^\infty t^{\log(da)} + \sum_{d=1}^\infty t^{\log(db)}\right).
\]
By the definition of $\Theta$ in Definition~\ref{def:Thetadefinition} together with Remark~\ref{rem:multiplicative} and equation \eqref{eqn:algebraic1}, it follows that equation \eqref{eqn:zetatoric} holds.
\end{proof}

\begin{proof}[Proof of Proposition~\ref{prop:s1inv}.]
Similarly to the proof of Proposition~\ref{prop:toric}, it is enough to show that if $e^L\notin\op{Spec}(\partial X_f)$, then
\begin{equation}
\label{eqn:chilxf}
\chi^L(X_f) = \sum_{p\in\op{Crit}(f)}(-1)^{\op{ind}(p)}\floor{e^{L-f(p)}}.
\end{equation}
This will imply that $X_f$ is dynamically tame, with $S$ consisting of the numbers $\log(d e^{f(p)})$ where $d$ is a positive integer and $p$ is a critical point of $f$. Then equation \eqref{eqn:algebraic1} will give the desired formula \eqref{eqn:zetaXf} for the zeta function.

Again, we can use Proposition~\ref{prop:approx} to compute $\chi^L(X_f)$ as
\[
\chi^L(X_f) = \chi^L(\widehat{X})
\]
where $\widehat{X}$ is a suitable $L$-approximation of $X_f$. We proceed in three steps.

{\em Step 1.\/}
We begin by computing the Reeb vector field on $\partial X_f$.

Let $\rho:\partial X_f\to \C P^1$ denote the map sending $z\mapsto [z_1:z_2]$. Note that $\partial X_f$ is identified with $S^3$ by the map sending $z\mapsto z/|z|$, and under this identification $\rho$ corresponds to the Hopf fibration. Let $\lambda_0$ denote the contact form on $S^3=\{z\in\C^2 \mid |z|=1\}$, and let $R_0$ denote the associated Reeb vector field. Then $\lambda_0$ is a connection 1-form on the Hopf fibration which integrates to $2\pi$ on each fiber, and $R_0$ is tangent to the fibers, given by the derivative of the $S^1$ action.  Under the identification $\partial X_f\simeq S^3$, the contact form on $\partial X_f$ is given by
\[
\lambda = \frac{1}{2\pi}e^{\rho^*f}\lambda_0.
\]

Let $\omega$ denote the curvature form on $\C P^1$ determined by the connection $\lambda_0$. Let $V_f$ denote the Hamiltonian vector field on $\C P^1$ determined by the function $f$ and the symplectic form $\omega$. Let $\widetilde{V}_f$ denote the vector field on $S^3$ given by the horizontal lift of the $V_f$ using the connection $\lambda_0$. Then the Reeb vector field associated to $\lambda$ is
\begin{equation}
\label{eqn:XfReeb}
R = 2\pi e^{-\rho^*f}\left(R_0 + \widetilde{V}_f\right).
\end{equation}

If $p\in\C P^1$ is a critical point of $f$, then it follows from equation \eqref{eqn:XfReeb} that $\rho^{-1}(p)\subset \partial X_f$ is (the image of) a simple Reeb orbit of symplectic action $e^{f(p)}$. If $p$ has Morse index 1, then this Reeb orbit is positive hyperbolic, while if $p$ has Morse index 0 or 2, then this Reeb orbit is elliptic or degenerate. 

{\em Step 2.\/} To complete the proof of \eqref{eqn:chilxf} and thus of the proposition, it is enough to show that the $L$-approximation $\widehat{X}$ can be chosen so that:
\begin{description}
\item{(i)}
The simple Reeb orbits arising from index $0$ and $2$ critical points of $f$, and their iterates with action less than or equal to $e^L$, are nondegenerate and elliptic.
\item{(ii)}
The contributions to $\chi^L(\widehat{X})$ from any Reeb orbits with action less than or equal to $e^L$ not arising from critical points of $f$ cancel out.
\end{description}
To prepare for this, we now study the Reeb flow on $\partial X_f$ more carefully.

Let $Z\subset\C P^1$ denote the union, over critical points $p$ of $f$, of the connected component containing $p$ of the level set $f^{-1}(f(p))$. Then it follows from equation \eqref{eqn:XfReeb} that the Reeb vector field $R$ is tangent to $\rho^{-1}(Z)$, but there are no Reeb orbits in $\rho^{-1}(Z)$ other than the Reeb orbits arising from the critical points of $f$.

Now let $\Sigma\subset \C P^1$ be a connected component of $\C P^1\setminus Z$. We can identify
\[
\Sigma \simeq (\R/2\pi\Z) \times (y_0,y_1)
\]
such that, letting $x,y$ denote the coordinates on the right hand side, we have $f(x,y)=y$. For $y\in(y_0,y_1)$, let $S_y$ denote the circle $(\R/2\pi\Z)\times\{y\}\subset \Sigma$. Define $h(y)$ to be the $\omega$-area of the connected component of  $\C P^1\setminus S_y$ containing $(\R/2\pi\Z)\times(y_0,y)$. Note that $h(y)\mod 2\pi$ is the holonomy of the connection $\lambda_0$ around the circle $S_y$. Also, $S_y$ is a periodic orbit of the Hamiltonian vector field $V_f$ whose period is the derivative $h'(y)$.

From equation \eqref{eqn:XfReeb} we now read off the following:
\begin{itemize}
\item
The Reeb vector field $R$ is tangent to the torus $\rho^{-1}(S_y)$.
\item
If $\frac{h'(y)+h(y)}{2\pi}$ is irrational, then there are no Reeb orbits in $\rho^{-1}(S_y)$.
\item
If
\[
\frac{h'(y)+h(y)}{2\pi} = \frac{a}{b}
\]
where $a$ and $b$ are relatively prime positive integers, then the torus $\rho^{-1}(S_y)$ is foliated by simple Reeb orbits $\gamma$, each of which has symplectic action
\[
\mathcal{A}(\gamma) = \frac{e^y}{2\pi}bh'(y).
\]
Moreover, if
\begin{equation}
\label{eqn:secondderivative}
h''(y)+h'(y)\neq 0
\end{equation}
then this is a Morse-Bott circle of Reeb orbits.
\end{itemize}

{\em Step 3.\/} We now explain how to choose the $L$-approximation $\widehat{X}$ satisfying (i) and (ii) above.

To start, without indicating this in the notation, we scale $f$ down slightly to obtain a subset of the interior of the original $X_f$. 

To obtain (i), it is sufficient if in the situation of Step 2, the following holds: If $S_y$ converges to an index $0$ critical point as $y\searrow y_0$, then $\lim_{y\searrow y_0}\frac{h'(y)+h(y)}{2\pi}$ is irrational; and if $S_y$ converges to an index $2$ critical point as $y\nearrow y_1$, then $\lim_{y\nearrow y_1}\frac{h'(y)+h(y)}{2\pi}$ is irrational. We can effect this by adding small quadratic terms to $f$ near its index $0$ and $2$ critical points.

To obtain (ii), it is sufficient if in the situation of Step 2, whenever $\frac{h'(y)+h(y)}{2\pi}$ is rational (with some upper bound on the denominator), condition \eqref{eqn:secondderivative} holds. One can then perturb $\partial X_f$ so that the corresponding Morse-Bott circle of Reeb orbits splits into two simple Reeb orbits, as in the toric case in Proposition~\ref{prop:toric}, so that the contributions of these simple orbits and their iterates to $\chi^L$ cancel, and any additional orbits created have action greater than $e^L$. To obtain condition \eqref{eqn:secondderivative} where required, in the situation of Step 2, one can modify $f$ so that away from the endpoints of the interval $(y_0,y_1)$, a small constant is added to $h$. One can then invoke Sard's theorem as in the proof of Proposition~\ref{prop:toric}.
\end{proof}



\end{document}